\documentclass[11pt,reqno]{amsart}
\usepackage[margin=2.9cm]{geometry}
\geometry{a4paper} 

\usepackage[english]{babel}
\usepackage[utf8]{inputenc}

\usepackage{graphicx}	
\usepackage[dvipsnames]{xcolor}			
\usepackage{amsfonts, amsmath, amsthm, amssymb}
\usepackage{mathtools, mathrsfs}
\usepackage[shortlabels]{enumitem}
\usepackage{bigints}
\usepackage[final]{pdfpages}
\usepackage{framed}
\usepackage[11pt]{moresize}
\usepackage[font=footnotesize,labelfont=bf]{caption}
\usepackage{subcaption}
\usepackage{tikz}
\usepackage{cite}
\usepackage{wrapfig}
\usepackage{bigints}
\usepackage{afterpage}
\usepackage{todonotes}
\usepackage{setspace}
\usepackage{appendix}
\usepackage{etoolbox}
\setlength{\parindent}{0em}
\setlength{\parskip}{1em}

\captionsetup[subfloat]{labelfont=normalfont}

%   MY MACROS / QUICK COMMANDS

% greek alphabet and other essentials
\newcommand{\e}{\varepsilon}
\newcommand{\f}{\frac}
\newcommand{\im}{\implies}

\newcommand{\la}{\lambda}
\newcommand{\al}{\alpha}
\newcommand{\be}{\beta}

\newcommand{\p}{\partial}
\newcommand{\lra}{\longrightarrow}

\newcommand{\w}{\omega}

\newcommand{\Om}{\Omega}

% boldface letters

% Calligraphic and blackboard bold letters
\newcommand{\C}{\mathbb{C}}
\newcommand{\E}{\mathbb{E}}

\newcommand{\R}{\mathbb{R}}
\newcommand{\Ss}{\mathbb{S}}
\newcommand{\U}{\mathbb{U}}
\newcommand{\Z}{\mathbb{Z}}
\newcommand{\N}{\mathbb{N}}

\newcommand{\cA}{\mathcal{A}}

\newcommand{\cL}{\mathcal{L}}
\newcommand{\cM}{\mathcal{M}}

\newcommand{\cT}{\mathcal{T}}
\newcommand{\cV}{\mathcal{V}}

% OPERATORS

\newcommand{\ord}{\operatorname{ord}}
\newcommand{\codim}{\operatorname{codim}}

\newcommand{\Gr}{\operatorname{Gr}}
\newcommand{\sgn}{\operatorname{sign}}
\newcommand{\sech}{\operatorname{sech}}
\newcommand{\dist}{\operatorname{dist}}

\newcommand{\spec}{\operatorname{Spec}}
\newcommand{\esspec}{\operatorname{Spec_{\rm ess}}}

\newcommand{\dom}{\operatorname{dom}}
\newcommand{\spn}{\operatorname{span}}

\DeclareMathOperator{\diag}{diag}

\DeclareMathOperator{\Mas}{Mas}

\newcommand{\graph}{\operatorname{graph}}

% Derivatives
\newcommand{\pde}[2]{\frac{\partial #1}{\partial #2 }} 
 
\newcommand{\de}[2]{\frac{d #1}{d #2 }} 
\newcommand{\des}[2]{\frac{d^2 #1}{d #2^2 }} 
\newcommand{\dek}[2]{\frac{d^k #1}{d #2^k }} 
\newcommand{\dei}[2]{\frac{d^i #1}{d #2^i }}

% Theorems
\newtheorem{lemma}{Lemma}[section]

% References
\usepackage[colorlinks=true,backref=page,linkcolor={blue}]{hyperref}
\usepackage[capitalise,nameinlink]{cleveref}

\newtheorem{prop}[lemma]{Proposition}
\newtheorem{theorem}[lemma]{Theorem}
\newtheorem{cor}[lemma]{Corollary}
\newtheorem{define}[lemma]{Definition}

\theoremstyle{definition}
\newtheorem{rem}[lemma]{Remark}
\newtheorem{hypo}[lemma]{Hypothesis}

%	\newlist{enumhypo}{enumerate}{1}
%	\setlist[enumhypo]{label=(\roman*),	ref=\thehypo(\roman*),partopsep=0pt, topsep=0pt, parsep=0pt}

\crefname{rem}{Remark}{Remarks}
\crefname{conj}{Conjecture}{Conjectures}
\crefname{hypo}{Hypothesis}{Hypotheses}
\crefname{enumhypoi}{Hypothesis}{Hypotheses}
\crefname{theorem}{Theorem}{Theorems}
\crefname{cor}{Corollary}{Corollaries}
\crefname{prop}{Proposition}{Propositions}
\crefname{define}{Definition}{Definitions}

\AtBeginEnvironment{appendices}{\crefalias{section}{appendix}}

\definecolor{skyblue}{rgb}{0.85,0.85,1}
\definecolor{deepcerise}{rgb}{0.85, 0.2, 0.53}
\definecolor{psychedelicpurple}{rgb}{0.87, 0.0, 1.0}
\definecolor{electricpurple}{rgb}{0.75, 0.0, 1.0}
\definecolor{purplemunsell}{rgb}{0.62, 0.0, 0.77}
\definecolor{internationalkleinblue}{rgb}{0.0, 0.18, 0.65}
\definecolor{indigoweb}{rgb}{0.29, 0.0, 0.51}

%https://latexcolor.com/

\setlength{\footskip}{40pt}

\begin{document}
	
	\title[Fourth-order NLS and the Maslov index]{Detecting eigenvalues in a fourth-order nonlinear Schr\"odinger equation with a non-regular Maslov box}
	
	\author[Mitchell Curran]{Mitchell Curran $^{1,2,\dagger}$}
	\thanks{$^1$Department of Mathematics and Statistics, Auburn University, Auburn, AL 36830 USA}
	\author[Robert Marangell]{Robert Marangell$^{2}$}
	\thanks{$^2$School of Mathematics and Statistics, University of Sydney, Sydney, NSW 2006 Australia}
	\thanks{$^\dagger$Corresponding author. Email: \texttt{mtc0076@auburn.edu}}

	\begin{abstract}
		We use the Maslov index to study the eigenvalue problem arising from the linearisation about solitons in the fourth-order cubic nonlinear Schr\"odinger equation (NLSE). Our analysis is motivated by recent work by Bandara et al., in which the fourth-order cubic NLSE was shown to support infinite families of multipulse solitons. Using a homotopy argument, we prove that the Morse indices of two selfadjoint fourth-order operators appearing in the linearisation may be computed by counting conjugate points, as well as a lower bound for the number of real unstable eigenvalues of the linearisation. We also give a Vakhitov-Kolokolov type stability criterion. The interesting aspects of this problem as an application of the Maslov index are the instances of non-regular crossings, which feature crossing forms with varying ranks of degeneracy. We handle such degeneracies directly via higher order crossing forms, using a definition of the Maslov index developed by Piccione and Tausk.
	\end{abstract}

	\maketitle
	\parskip=0em
	\tableofcontents
	
	\numberwithin{equation}{section}
	\allowdisplaybreaks

	\section{Introduction}
	\parskip=1em
	
	The fourth-order cubic nonlinear Schr\"odinger (NLS) equation
	\begin{equation}\label{4NLS_original}
		i \Psi_t =  - \f{\be_4}{24} \Psi_{xxxx} + \f{\be_2}{2} \Psi_{xx} - \gamma |\Psi|^2\Psi,
	\end{equation}
	models the propagation of pulses in media with Kerr nonlinearity that are subject to both quartic and quadratic dispersion \cite{KarlssonHook94,Akhmediev1994,BGBK21,TABM19}. Here $\Psi$ is the slowly varying complex envelope of the pulse, and $\be_2, \be_4,$ and $\gamma$ are real coefficients.

	Solutions to \eqref{4NLS_original} of the form $\Psi(x,t) = e^{i\be t} \phi(x)$, $\be\in\R$, are called  \emph{standing wave} solutions. Following the convention of \cite{BGBK21}, when the real-valued wave profile $\phi$ is a homoclinic orbit of the associated standing wave equation (see \eqref{SWE}), we will call  $\Psi$ a \emph{soliton solution} of \eqref{4NLS_original}. Karlsson and H\"o\"ok \cite{KarlssonHook94} discovered an exact analytic family of soliton solutions to \eqref{4NLS_original} with a squared hyperbolic secant profile. Akhmediev, Buryak and Karlsson \cite{Akhmediev1994} observed oscillatory behaviour in the tails of solitons for certain values of $\be$. Akhmediev and Buryak \cite{Buryak1995} showed the existence of and derived a stability criterion for bound states of two or more solitons when the single solitons have oscillating tails. Karpman and Shagalov \cite{Karp96,KarpShag97,KarpShag2000} considered the extension of \eqref{4NLS_original} to higher-order nonlinearities and multiple space dimensions. All of these works considered the case of negative quartic and negative quadratic dispersion, $\be_4<0, \be_2<0$.

	More recently, \eqref{4NLS_original} has been the focus of a number of studies following the experimental discovery of \emph{pure quartic solitons} (PQSs) in silicon photonic crystal waveguides \cite{BR_PQS_2016}. These solitons exist through a balance of negative quartic dispersion and the nonlinear Kerr effect, for which $\be_2=0$ and $\be_4<0$. They have attracted much attention for their potential applicability to ultrafast lasers due to their favourable energy-width scaling \cite{ultrafast_lasers17,TABM19}. Following the discovery of PQSs, Tam \emph{et al.} \cite{TABM19} numerically investigated their existence and spectral stability. They also showed \cite{TamALexander_etal_18,TamAlexander_etal_20} that PQSs and solitons of the classical second-order NLS equation, for which $\be_4=0$, are in fact part of a broader continuous family of soliton solutions to \eqref{4NLS_original} for nonpositive dispersion coefficients $\be_4$ and $\be_2$.

	Extending the work of Tam \emph{et al.}, in a series of works Bandara and co-authors \cite{BGBK21,BGBK23,BGBK24} performed a detailed study of the structure of soliton solutions of \eqref{4NLS_original}. In \cite{BGBK21}, they used a dynamical systems approach to show that \eqref{4NLS_original} supports infinite families of multi-pulses (multi-hump solitons) for \emph{both} positive and negative quadratic dispersion (and negative quartic dispersion, $\be_4<0$). By seeking standing wave solutions, they transform \eqref{4NLS_original} into \eqref{SWE}, a fourth-order Hamiltonian ODE with two reversibility symmetries. Solitons of \eqref{4NLS_original} correspond to orbits of \eqref{SWE} that are homoclinic to the origin, which the authors find and track with numerical continuation techniques. In addition to the primary (single-hump) homoclinic orbit, which may have oscillating or non-oscillating decaying tails, they show the existence of connecting orbits from the origin to periodic orbits in the zero energy level (``EtoP connections"). These connecting orbits allow the construction of heteroclinic cycles: homoclinic solutions which follow an EtoP connection from the origin to a periodic orbit, then loop $n$ times around the periodic orbit, and finally follow an EtoP connection back to the origin. The different combinations of EtoP connections and the symmetry properties of the periodic orbits generate cycles that organise infinite families of multi-hump solitons of different symmetry types (symmetric, antisymmetric and nonsymmetric). Moreover, the authors show that there are infinitely many periodic orbits that support EtoP connections, and, hence, a menagerie of families of homoclinic solutions with different symmetry properties. In a subsequent work, Bandara \emph{et al.} discuss connecting orbits between periodic solutions (``PtoP connections"), which not only yield solutions to periodic backgrounds, but can also be combined with EtoP connections to obtain even more families of solitons (with oscillating but decaying tails). Since EtoP and PtoP connections play a vital role in the existence and organisation of homoclinic solutions to \eqref{SWE}, and are entirely determined by the periodic orbits that exist, in \cite{BGBK24} Bandara \emph{et al.} study the underlying periodic orbit structure of equation \eqref{SWE}.

	Several authors have studied the stability of soliton solutions to \eqref{4NLS_original}. In \cite{BGBK21}, Bandara et al. used numerical simulations to show that, while many of the multi-pulse solutions they found were unstable, some were only weakly unstable, and therefore possibly observable in experiments over a number of dispersion lengths. More rigorous stability analyses were undertaken in \cite{NataliPastor15} and \cite{ParkerAceves21}. In \cite{NataliPastor15}, Natali and Pastor proved the orbital stability of the family of solutions found by Karlsson and H\"o\"ok (realised as a single exact solution to the nondimensionalised version of \eqref{4NLS_original}, see \eqref{4NLS}). As they observed (see also \cite[\S II]{TamAlexander_etal_20}), an explicit formula for a solution of the form found by Karlsson and H\"o\"ok exists only for a certain fixed value of the frequency parameter $\be$. Thus, while the classical results of Grillakis, Shatah and Strauss \cite{GSS87,GSS90} are technically valid, they are currently unfeasible from a practical point of view. In \cite{ParkerAceves21}, Parker and Aceves proved the existence of a primary single-hump soliton in certain parameter regimes of \eqref{4NLS_original}, along with an associated discrete family of multi-hump solitons. Under certain assumptions, they proved orbital stability of the primary pulse and spectral instability of the associated multi-pulses.

	Outside of \eqref{4NLS_original}, a number of works have investigated multi-pulses in Hamiltonian systems. The first study was undertaken by Pelinovksy and Chugunova in \cite{ChugPel07}, who proved the existence and spectral stability of two-pulse solutions in the fifth order KdV equation. An extensive study of periodic multi-pulses in the fifth order KdV equation was performed by Parker and Sandstede \cite{ParkerSand22}. In \cite{KPS20}, Kapitula, Parker and Sandstede used a reformulated Krein matrix to study the location and Krein signature of small eigenvalues associated with tail-tail interactions in $n$-pulses in first- and second-order in time Hamiltonian partial differential equations.

	In this paper, we further develop the \emph{spectral} stability theory for \emph{arbitrary} single and multi-hump soliton solutions to \eqref{4NLS_original}. {(We stress that our results are applicable to \emph{any} soliton solution; see \cref{rem:everywhere_applicable}.)} In particular, we seek to determine the existence of unstable eigenvalues of the associated linear operator
	\begin{equation}\label{N:intro}
		N = \begin{pmatrix}
			0 & -L_- \\ L_+ & 0
		\end{pmatrix},
	\end{equation}
	where $L_+$ and $L_-$ are selfadjoint fourth order operators acting in $L^2(\R)$. Our results are not confined to solitons satisfying Hypothesis 2, the first part of Hypothesis 3 and Hypothesis 4 in \cite{ParkerAceves21}. The main tool in our arsenal is a topological invariant known as the \emph{Maslov index}, a signed count of the nontrivial intersections, or \emph{crossings}, of a path of Lagrangian subspaces of $\R^{2n}$ with a fixed reference plane. Since the Maslov index is unable to detect complex eigenvalues, we will restrict our search to those unstable eigenvalues that are real. In this work, we make no comment on the orbital stability of the soliton solutions of interest.

	Our main results are as follows. {\Cref{thm:Lpm_eval_counts} states that the \emph{Morse indices} (the number of positive eigenvalues) of $L_+$ and $L_-$, denoted $P\coloneqq n_+(L_+)$ and $Q\coloneqq n_+(L_-)$, are equal to the number of {\em conjugate points} (see \cref{define:conj_pointsL}) of the operators $L_+$ and $L_-$, respectively.} In \cref{thm:main_lower_bound}, we provide a lower bound for the number of positive real eigenvalues of \eqref{N:intro}. The bound is given in terms of $P$ and $Q$, as well as a correction term which represents the contribution to the Maslov index from the crossing corresponding to the zero eigenvalue of $N$. An immediate consequence of \cref{thm:main_lower_bound} is \cref{cor:JonesGrillakis}, a \emph{Jones-Grillakis} type instability theorem \cite{J88,Grill88,KapProm} which gives a sufficient condition on $P$ and $Q$ for spectral instability of the underlying soliton. We also give a \emph{Vakhitov-Kolokolov} (VK) type stability criterion \cite{VK73,pelinovsky} in \cref{thm:VK_criterion}, where the spectral stability of solitons for which $P=1$ and $Q=0$ is determined by the sign of a certain integral.

	{ \Cref{thm:main_lower_bound} has been proven for abstract selfadjoint operators $L_+$ and $L_-$ via Sylvester's law of inertia \cite{Pel05,pelinovsky}, the Pontryagin invariant subspace theorem \cite{CP10}, and the Krein signature \cite{KKS04,KKS05,KP12art}. Here, we present a new proof, using the Maslov index and tailored to the setting of ordinary differential operators, which does not require projecting off the kernel of $N$ and associated results on eigenvalue counts for constrained selfadjoint operators. An important aspect of our results, in line with recent results on the Maslov index for fourth order operators \cite{BJP24,Howard21_Hormander, Howard21}, is the geometric interpretation of the Morse indices $P$ and $Q$ as the number of $L_+$ and $L_-$ conjugate points, respectively. All of the required data for the lower bound in \cref{thm:main_lower_bound} therefore occurs when the spectral parameter is zero. As highlighted in \cite{Beck_Jaquette22}, this is a convenient feature for numerical computations that is not afforded by a calculation using the Evans function \cite{AGJ90}. In light of this, an alternate form of \eqref{lower_bound}, which is more useful for numerical computations, is given in \cref{rem:alternate_lower_bound}. }

	The key feature of the eigenvalue problem for $N$ that makes it amenable to the Maslov index is the Hamiltonian structure of the eigenvalue equations when written as a spatially dynamic first order system in $\R^8$. This system therefore preserves Lagrangian planes. Moreover, in parameter regimes which guarantee the essential spectrum of $N$ is confined to the imaginary axis, the first order system has an exponential dichotomy on the positive and negative half lines. This gives rise to two-parameter families of Lagrangian planes (in $x$ and $\la$, the spatial and spectral parameters), known as the \emph{unstable} and  \emph{stable bundles}, comprising the solutions that decay to zero exponentially as $x\to -\infty$ and $x\to +\infty$, respectively. These bundles are the central objects of our analysis; their nontrivial intersection at common values $x\in\R$ and $\la\in\R$ encode the positive real eigenvalues of interest. By exploiting homotopy invariance of the Maslov index, we will detect these eigenvalues by instead counting conjugate points, i.e. the nontrivial intersections of the unstable bundle, when the spectral parameter is zero, with the stable subspace of the asymptotic system.

	The Maslov index has been used to study the spectrum of homoclinic orbits in a number of works \cite{J88,BJ95,CJ18,Corn19,CH14,HLS18,BCLJMS18,Howard21,Howard21_Hormander}, namely as a tool to detect real unstable eigenvalues. If monotonicity in the spectral parameter holds, as is often the case in selfadjoint problems \cite{HLS18,BCLJMS18,Howard21}, then it is possible to give an exact count of such eigenvalues in terms of the Maslov index of a Lagrangian path in the spatial parameter. Howard, Latushkin and Sukhtayev \cite{HLS18} proved the equality of the Morse and Maslov indices for matrix-valued Schr\"odinger operators with asymptotically constant symmetric potentials on the line, applying their results to determine the stability of nonlinear waves in various reaction-diffusion systems. Beck \emph{et al.} \cite{BCLJMS18} proved the instability of pulses in gradient reaction-diffusion systems, generalising the instability result for pulses in scalar reaction-diffusion equations (see \cite[\S 2.3.3]{KapProm}). Jones \cite{J88} gave a stability criterion for soliton solutions of the classic cubic NLS equation with a spatially dependent nonlinearity. Bose and Jones \cite{BJ95} proved the stability of a travelling wave in a coupled two-variable reaction diffusion system with diffusion in one variable, in which they used the Maslov index to locate eigenvalues in gradient systems. Chen and Hu \cite{CH14} proved stability and instability criteria for standing pulses in a doubly-diffusive FitzHugh-Nagumo equation. Chardard, Dias and Bridges \cite{CDB09,CDB09part1,CDB11part2} developed numerical tools to compute the Maslov index and study the stability of homoclinic orbits in Hamiltonian systems. Cornwell and Jones \cite{Corn19,CJ18} analysed travelling waves in skew-gradient systems. By showing monotonicity in the spectral parameter \cite{Corn19} and computing the Maslov index at all conjugate points in the travelling wave co-ordinate, they proved \cite{CJ18} the stability of a travelling pulse in a doubly diffusive FitzHugh-Nagumo system.

	The current problem is distinguished from previous works by the presence of \emph{non-regular} crossings. In their work \cite{RS93}, Robbin and Salamon require that all crossings be \emph{regular} i.e. that the crossing form is nondegenerate. This is then topologically extended to all continuous Lagrangian paths (i.e. to those with non-regular crossings) via homotopy invariance. As pointed out in \cite{GPP_full,GPP04}, one issue with extending the definition in this way is the destruction of potentially important information encoded in any non-regular crossings. For example, in the present paper, it turns out that the contribution of the crossing corresponding to the zero eigenvalue of $N$, which is non-regular with respect to the spectral parameter, is determined by the kernel and generalised kernel of $N$. It is not clear how a homotopy argument would capture the same information. Additionally, the technical details of perturbing the path typically requires breaking the structures that generate the path in the first place, making analytical calculations of such perturbations difficult.

	We therefore use the approach of \cite{PT09} to locally compute the Maslov index directly through the use of \emph{higher-order crossing forms}. These generalise the (first-order) crossing form defined in \cite{RS93}, and allow us to calculate the contribution to the Maslov index from non-regular crossings without perturbative arguments. In their work, Deng and Jones \cite{DJ11} derived a second order crossing form in the case where the crossing form was identically zero. This was recently extended in \cite{BJP24} to the cases when \emph{all} lower-order forms are identically zero. Our formulas, adapted from \cite{PT09,GPP04}, do not require this assumption. Moreover, we do not write the Lagrangian path locally as the graph of an infinitesimally symplectic matrix, instead relying solely on the construction of \emph{root functions} and \emph{degeneracy spaces} (see \cref{subsec:maslov_index2}).

	Specific instances of non-regular crossings in the present paper include the conjugate point at the top left corner of the \emph{Maslov box}, which corresponds to the zero eigenvalue of $N$, as well as crossings in the spatial parameter. For the former, we find that the crossing form in the spectral parameter is identically zero; this is a feature of eigenvalue problems for operators of the form of \eqref{N:intro} \cite{CCLM23}. For crossings in the spatial parameter, the crossing form is degenerate, but in general not identically zero, and so the approaches of \cite{DJ11,CCLM23,BJP24} do not apply. This phenomenon appears to be the result of the eigenvalue equations being fourth order, and has been encountered in \cite{Howard21_Hormander,Howard21}. There, the authors circumvented the issue via a semi-definiteness argument; in the present setting, we elect to use higher order crossing forms in an effort to generalise previous efforts with degenerate crossing forms.

	In \cite{CCLM23}, a similar lower bound to that in \cref{thm:main_lower_bound} was derived for an eigenvalue problem of the form of \eqref{eq:N_evp} on a compact interval, where $L_\pm$ are Schr\"odinger operators. In that work, the correction term was computed via an analysis of the \emph{eigenvalue curves}, which represent the evolution of the eigenvalues as the spatial domain is shrunk or expanded. That the spatial domain is the whole real line in the present setting precludes the approach taken in \cite{CCLM23}, where the technical details required the spatial domain to be compact.

	\subsection{Statement of main results}\label{subsec:results}
	
	Following \cite{BGBK21,TamAlexander_etal_20,TABM19}, we will treat the case of negative quartic dispersion, $\be_4<0$, nonzero quadratic dispersion $\be_2\neq0$ and positive Kerr nonlinearity $\gamma>0$, giving rise to the following nondimensionalised version of \eqref{4NLS_original}
	\begin{equation}\label{4NLS}
		i \psi_t = \psi_{xxxx} + \sigma_2 \psi_{xx} - |\psi|^2\psi,
	\end{equation}
	where $\psi:\R \times \R \to \C$ and $\sigma_2 = \sgn\be_2$. (For the transformations used to obtain \eqref{4NLS} from \eqref{4NLS_original} for $\be_4<0, \be_2\neq0$, we refer the reader to \cite[Table 1]{BGBK21}.)  The modifications needed to treat pure quartic solitons for which $\be_2=0$ will be given in \cref{sec:concluding_rems}.
	
	Our focus will be to determine the spectral stability of standing wave solutions
	\begin{equation}\label{SWave}
		\psi(x,t) = e^{i\be t} \phi(x), \qquad \phi(x)\in \R,
	\end{equation}
	to \eqref{4NLS}, subject to perturbations in $L^2(\R;\C)$. Substituting  \eqref{SWave} into \eqref{4NLS} shows that the wave profile $\phi$ satisfies the \emph{standing wave equation}
	\begin{equation}\label{SWE}
		\phi'''' + \sigma_2 \phi '' + \beta \phi - \phi^3 = 0.
	\end{equation}
	Using the change of variables 
	\begin{gather}
		\phi_1 = \phi '' + \sigma_2 \phi, \qquad \phi_2 = \phi, \qquad  \phi_3 =\phi',    	\qquad \phi_4 =\phi''',
	\end{gather}
	we may write \eqref{SWE} as the first order Hamiltonian system 
	\begin{equation}\label{1st_order_SWE_Ham}
		\begin{pmatrix}
			\phi_1' \\
			\phi_2' \\
			\phi_3' \\
			\phi_4' 
		\end{pmatrix} = \begin{pmatrix}
			\phi_4 + \sigma_2 \phi_3 \\
			\phi_3 \\
			\phi_1 - \sigma_2 \phi_2 \\
			-\sigma_2 \phi_1 + \phi_2 - \be \phi_2 + \phi_2^3
		\end{pmatrix}.
	\end{equation}
	Motivated by the families of homoclinic orbits discovered in \cite{BGBK21}, we consider orbits of \eqref{1st_order_SWE_Ham} that are homoclinic to the origin, which correspond to soliton solutions of \eqref{4NLS}. An example is given by the exact solution found by Karlsson and H\"o\"ok \cite{KarlssonHook94},
	\begin{equation}\label{KH_soln}
		\phi_{\text{KH}}(x) = \sqrt{\f{3}{10}} \sech^2\left (\f{x}{2\sqrt{5}}\right ),
	\end{equation}
	which solves \eqref{SWE} for the specific values $\beta=4/25$ and $\sigma_2=-1$.
	
	We will assume that the origin in \eqref{1st_order_SWE_Ham} is hyperbolic. Noting that the eigenvalues $\mu$ of the linearisation about the origin solve
	\begin{equation}\label{origin_eigvals}
		\mu^2 = \f{1}{2}\left (-\sigma_2\pm\sqrt{1-4\be} \right )
	\end{equation}
	(where we used that $\sigma_2^2=1$), hyperbolicity holds provided 
	\begin{equation}\label{assumptions}
		\begin{cases}
			\be>0  & \,\,\,\text{if} \quad\sigma_2=-1 \\
			\be>\f{1}{4}  & \,\,\,\text{if} \quad\sigma_2=1.
		\end{cases}
	\end{equation}
	For technical reasons (see \cref{subsec:4.1}), in the first part of \eqref{assumptions} we additionally require
	\begin{equation}\label{assumption2}
		\be\neq \f{1}{4} \quad \text{if} \quad \sigma_2=-1.
	\end{equation}
	Linearising \eqref{4NLS} by substituting the complex-valued perturbation
	\begin{equation}\label{}
		\psi(x,t) = \left [ \phi(x) + \e \left (u(x) + i v(x) \right )  e^{\la t} \right ]e^{i\be x}
	\end{equation}
	for $u,v \in L^2(\R;\R)$ into \eqref{4NLS}, collecting $O(\e)$ terms and separating into real and imaginary parts leads to the following linearised dynamics in $u$ and $v$:
	\begin{gather}\label{uv_system}
		\begin{aligned}
			-u'''' - \sigma_2 u'' - \be u + 3  \phi^2 u &= \la v \\
			-v'''' - \sigma_2 v'' - \be v +  \phi^2 v &= - \la u.
		\end{aligned}
	\end{gather}
	We can write \eqref{uv_system} as the spectral problem
	\begin{equation}\label{eq:N_evp}
		N \begin{pmatrix}
			u \\ v
		\end{pmatrix} = \la \begin{pmatrix}
			u \\ v
		\end{pmatrix},
	\end{equation}
	where $N$ is the unbounded and densely defined linear operator
	\begin{align}\label{eq:N_linear_operator}
		N = \begin{pmatrix}
			0 & -L_- \\ L_+ & 0 
		\end{pmatrix}, \qquad \begin{cases}
			L_- = - \p_x^4 - \sigma_2 \p_x^2 - \be + \phi^2, \\
			L_+ = - \p_x^4 - \sigma_2 \p_x^2 - \be + 3 \phi^2,
		\end{cases}
	\end{align}
	with 
	\begin{equation}\label{domains}
		\dom(N) = H^4(\R) \times H^4(\R), \qquad \dom(L_\pm) = H^4(\R).
	\end{equation}
	{ Note that the operators $L_\pm$ are selfadjoint with the domain \eqref{domains} (see, for example, \cite{Weidmann87_spec_ODOs}). In addition, we show in \cref{subsec:4.1} that their essential spectra are confined to the negative half line provided \eqref{assumptions} holds.} 
	
	We wish to determine whether the spectrum of $N$  intersects the open right half plane. Because $N$ is Hamiltonian, its spectrum has four-fold symmetry, and instability follows from any part of the spectrum lying off the imaginary axis. We will see in \cref{sec:setup2} that, under  \eqref{assumptions}, the essential spectrum of $N$ is confined to the imaginary axis and bounded away from the origin. Using the Maslov index, our specific goal will be to detect unstable eigenvalues that are purely real.
	
	We point out that the equation $L_-\phi=0$ is just \eqref{SWE}, and, differentiating  \eqref{SWE} with respect to $x$, we have $L_+ \phi_x =0 $. Thus
	\begin{equation*}\label{}
		0\in\spec(L_-)\cap \spec(L_+),
	\end{equation*} 
	with $\phi\in\ker(L_-)$ and $\phi_x\in\ker(L_+)$. We make the following simplicity assumption.
	\begin{hypo}\label{hypo:simplicity_assumption}
		$\ker(L_-)=\spn\{\phi\}$ and $\ker(L_+)=\spn\{\phi_x\}$. 
	\end{hypo}
	Notice that when $\la=0$, the eigenvalue equations \eqref{eq:N_evp} decouple into two independent equations, $L_-v=0$ and $L_+u=0$, so that $\ker(N) = \ker(L_+)\oplus \ker(L_-)$. \Cref{hypo:simplicity_assumption} therefore implies that $\ker(N) = \spn\{(\phi_x,0)^\top, (0,\phi)^\top\}$.
	
	{
	To state our first main result, we first introduce some notation. Let us denote
	\begin{align*}
		P &\coloneqq \# \{\text{positive eigenvalues of } L_+\}, \\
		Q &\coloneqq \# \{\text{positive eigenvalues of } L_-\},
	\end{align*}
	and denote by
	\begin{equation}\label{first_order_systems}
		\mathbf{u}_x=A_+(x,\la)\mathbf{u}, \qquad \qquad \mathbf{v}_x=A_-(x,\la)\mathbf{v},
	\end{equation}
	the first-order systems associated with the eigenvalue equations $L_-v=\la v$ and $L_+u=\la u$, respectively. (The substitutions used to reduce the equations to such first order systems, as well as explicit formulas for $A_+(x,\la)$ and $A_-(x,\la)$, will be given in \cref{sec:L+L__counts}.) For all $\la$ lying outside the essential spectrum of $L_+$ and $L_-$ (see \eqref{essspecL}), the constant-coefficient asymptotic systems associated with those in \eqref{first_order_systems} with coefficient matrices
	\begin{equation}\label{asymptotic_systems}
		A_+(\la) \coloneqq \lim_{x\to \pm\infty} A_+(x,\la), \qquad A_-(\la) \coloneqq \lim_{x\to \pm\infty} A_-(x,\la),
	\end{equation}
	are hyperbolic with two-dimensional stable and unstable subspaces. Thus, each system in \eqref{first_order_systems} admits exponential dichotomies on $\R^+$ and $\R^-$. We denote by $\E^u_+(x,\la)$ (resp. $\E^u_-(x,\la)$) the corresponding $x$- and $\la$-dependent two dimensional subspace of solutions to $\mathbf{u}_x=A_+(x,\la)\mathbf{u}$ (resp. $\mathbf{v}_x=A_-(x,\la)\mathbf{v}$) that decay to zero as $x\to -\infty$, and we denote by $\Ss_+(\la)$ (resp. $\Ss_-(\la)$) the $\la$-dependent two-dimensional stable subspace of the matrix $A_+(\la)$ (resp. $A_-(\la)$). We introduce the following definition for nontrivial intersections of $\E^u_\pm(x,\la)$ with $\Ss_\pm(\la)$ when $\la=0$.
	\begin{define}\label{define:conj_pointsL}
		An $L_+$ \emph{conjugate point} is a value $x_0\in\R$ such that $\E^u_+(x_0,0) \cap \Ss_+(0)\neq \{0\}$. An $L_-$ \emph{conjugate point} is similarly defined via $\E^u_-(x_0,0) \cap \Ss_-(0)\neq \{0\}$. We denote the total number of $L_+$ and $L_-$ conjugate points on $\R$, counted with multiplicity, by
		\begin{equation}\label{pcqc}
			p_c \coloneqq \sum_{x\in\R}\dim\left ( \E^u_+(x,0)\cap \Ss_+(0) \right ) \qquad \text{and} \qquad q_c \coloneqq \sum_{x\in\R} \dim\left ( \E^u_-(x,0)\cap \Ss_-(0) \right )
		\end{equation}
		respectively. 
	\end{define}
	Our first main result states that the Morse indices of $L_+$ and $L_-$ can be computed by counting conjugate points. Similar results for certain classes of selfadjoint fourth order operators may be found in \cite{Howard21_Hormander, Howard21,BJP24}.
	\begin{theorem}\label{thm:Lpm_eval_counts}
		Assume \cref{hypo:simplicity_assumption} and the conditions \eqref{assumptions}--\eqref{assumption2}. Then 
		\begin{equation}\label{eq:P=conj_points}
			P= p_c,
		\end{equation}
		and
		\begin{equation}\label{eq:Q=conj_points}
			Q= q_c.
		\end{equation}
	\end{theorem}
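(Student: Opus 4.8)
The plan is to realise $P$ as a Maslov index via a homotopy-in-a-box argument, using the selfadjointness of $L_+$ to extract monotonicity in the spectral parameter $\la$; the identity $Q=q_c$ then follows verbatim with $A_-$, $\E^u_-$, and $\Ss_-$ in place of $A_+$, $\E^u_+$, and $\Ss_+$. First I would fix $\Lambda$ larger than every eigenvalue of $L_+$, so that $(0,\Lambda]$ contains all of its positive eigenvalues, and regard the unstable bundle $\E^u_+(x,\la)$ as a two-parameter family of Lagrangian planes in $\R^4$, recording its intersections with the $\la$-dependent reference plane $\Ss_+(\la)$ as $(x,\la)$ traverses the boundary of the box $[-\infty,+\infty]\times[0,\Lambda]$. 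Selfadjointness of $L_+$ guarantees that the first-order system $\bu_x=A_+(x,\la)\bu$ is Hamiltonian, with $A_+=JB_+$ for a symmetric matrix $B_+(x,\la)$ and $J$ the standard symplectic form; hence $\E^u_+(x,\la)$ is genuinely a path of Lagrangian planes and its Maslov index is well defined. Since the boundary of the box is contractible, homotopy invariance forces the signed crossing count around it to vanish, and the strategy is to play the four sides against one another.

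The sides at spatial infinity and at $\la=\Lambda$ are designed to contribute nothing. On the side $x=-\infty$ the bundle coincides with its limit $\E^u_+(-\infty,\la)$, the unstable subspace of $A_+(\la)$, which by hyperbolicity meets $\Ss_+(\la)$ only in $\{0\}$; on the side $\la=\Lambda$ one checks that, for $\Lambda$ large enough, $\E^u_+(x,\Lambda)\cap\Ss_+(\Lambda)=\{0\}$ for every $x$, so that no conjugate points occur at large $\la$. The spectral side $x=+\infty$ carries the eigenvalues of $L_+$ in $(0,\Lambda]$, and here the decisive ingredient is monotonicity: differentiating the flow in $\la$ produces a crossing form governed by $\p_\la B_+$, which is sign-definite, so that every eigenvalue crossing carries the same sign and this side contributes exactly $\pm P$. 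Homotopy invariance then identifies $P$ with the signed crossing count along the bottom side $\la=0$, where $\Ss_+(\la)$ specialises to $\Ss_+(0)$ and the crossings are, by \cref{define:conj_pointsL}, precisely the $L_+$ conjugate points.

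The remaining task---and the step I expect to be the main obstacle---is to show that this signed count agrees with the unsigned total $p_c$, that is, that every conjugate point contributes with a single sign and with multiplicity equal to $\dim\big(\E^u_+(x_0,0)\cap\Ss_+(0)\big)$. This is where the fourth-order character of the problem asserts itself: in contrast to the second-order Schr\"odinger setting, the spatial crossing form at $\la=0$ is in general degenerate, though not identically zero, so the conjugate points are non-regular crossings and the first-order Robbin--Salamon form cannot by itself recover the correct multiplicity. To handle this I would avoid any perturbation of the path and instead compute the local contribution of each conjugate point directly with the higher-order crossing forms of Piccione and Tausk, building root functions adapted to the degeneracy space $\E^u_+(x_0,0)\cap\Ss_+(0)$ and verifying that the induced forms are semi-definite of a fixed sign at every order, so that the local Maslov index equals the dimension of the intersection.

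Finally I would address the one genuinely boundary-located crossing, namely the corner $(+\infty,0)$ arising from the kernel element $\phi_x\in\ker(L_+)$ supplied by \cref{hypo:simplicity_assumption}. Because $\phi_x$ decays as $x\to+\infty$, its frame aligns with the asymptotic plane $\Ss_+(0)$ only in the limit, so at each finite $x$ it produces no intersection; realising the box as $[-\infty,X]\times[0,\Lambda]$ and letting $X\to+\infty$ confirms that this crossing is absorbed into the asymptotic reference, contributing to neither the positive-eigenvalue count $P$ nor the finite conjugate-point count $p_c$. Assembling the four side contributions through homotopy invariance then yields $P=p_c$, and the parallel computation gives $Q=q_c$.
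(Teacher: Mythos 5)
Your outline follows the same architecture as the paper's proof: a Maslov box with monotonicity in $\la$ along the spectral side coming from selfadjointness, no crossings at $x=-\infty$ and at large $\la$, and the degenerate spatial crossings at $\la=0$ handled directly with the Piccione--Tausk higher-order crossing forms rather than by perturbation. Two points, however, need repair before this becomes a proof.

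First, you place the spectral side at $x=+\infty$ and use $\Ss_+(\la)$ as the reference plane there. The map $\la\mapsto \E^u_+(+\infty,\la)=\lim_{x\to\infty}\E^u_+(x,\la)$ is \emph{discontinuous} precisely at the eigenvalues of $L_+$: off the spectrum the limit is the unstable subspace $\U_+(\la)$, which is bounded away from the train of $\Ss_+(\la)$, while at an eigenvalue the limit jumps onto the train. So the Maslov index of that side is not defined for the path you propose. The paper avoids this by truncating at a finite $x=\ell$ satisfying \eqref{eq:ell_large_enough} and working with the genuine Lagrangian \emph{pair} $\la\mapsto(\E^u_+(\ell,\la),\E^s_+(\ell,\la))$, whose crossings are exactly the eigenvalues; one then has to show separately (a stratum-homotopy argument) that replacing $\E^s_+(\ell,0)$ by $\Ss_+(0)$ along the bottom side does not change the count, so that the bottom-side crossings really are the conjugate points of \cref{define:conj_pointsL}. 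Your closing remark about taking the box to be $[-\infty,X]\times[0,\Lambda]$ and letting $X\to\infty$ is the right instinct, but it must be built into the definition of the spectral side from the start, not invoked only for the corner.

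Second, the corner crossing forced by $\phi_x\in\ker(L_+)$ is not ``absorbed'': with the truncated box there \emph{is} a two-sided crossing at $(x,\la)=(\ell,0)$ at every finite $\ell$, and your claim that it ``produces no intersection at each finite $x$'' is not correct. What actually happens is that its contribution vanishes because of the endpoint conventions in \cref{define:Maslov_GPP}: the arrival along the bottom side is a \emph{negative} crossing at the final point of that path (contributing $0$), and the departure along the spectral side is a \emph{positive} crossing at the initial point of that path (contributing $0$). For $L_-$ the signs reverse, the two corner contributions are $+1$ and $-1$ individually, and it is their cancellation -- not their vanishing -- that gives $Q=q_c$; so the ``verbatim'' transfer to $L_-$ hides a genuinely different bookkeeping at the corner. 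Finally, be aware that the semi-definiteness you hope to verify ``at every order'' is where most of the labour lies: the paper needs forms up to order $3$ generically, and up to order $9$ at points where $\phi$ and its first two derivatives vanish, before the partial signatures sum to the full dimension of the intersection.
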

	\begin{rem}
		\Cref{thm:Lpm_eval_counts} will also hold in the case of soliton solutions to \eqref{4NLS} with any even-integer power-law nonlinearity, i.e.
		\begin{equation}\label{general_NLS_powerlaw}
			i \psi_t = \psi_{xxxx} + \sigma_2 \psi_{xx} - |\psi|^{2p}\psi, \qquad p\in\Z^+,
		\end{equation}
		 as studied in \cite{KarpShag97,KarpShag2000}; for more details, see \cref{rem:general_NLS_powerlaw,rem:general_NLS_powerlaw2}. However, with the soliton solutions of \cite{BGBK21} in mind, we have stated our results for the cubic case. 
	\end{rem}
}
	To state our next main result, let us denote 
	\begin{align*}
		n_+(N) &\coloneqq   \#\{\text{positive real eigenvalues of } N\},
	\end{align*}
	and define the quantities 
		\begin{equation}\label{key_integrals}
				\mathcal{I}_1\coloneqq  \int_{-\infty}^{\infty}   \phi_x\, \widehat{v} \,dx, \qquad \quad  
				\mathcal{I}_2\coloneqq  \int_{-\infty}^{\infty} \phi \,\widehat{u}\, dx,
			\end{equation}
		where $\widehat{v}$ is any solution in $ H^4(\R)$ to $-L_- v = \phi_x$ and $\widehat{u}$ is any solution in $ H^4(\R)$ to $L_+u=\phi$. (Note that the equations  $-L_- v = \phi'$ and $L_+u=\phi$ each satisfy a solvability condition that guarantees the existence of such solutions; for details see \cref{sec:proof_lower_bound}.)
	\begin{theorem}\label{thm:main_lower_bound}
		Assume \cref{hypo:simplicity_assumption}, the conditions \eqref{assumptions}--\eqref{assumption2}, and suppose $\mathcal{I}_1, \mathcal{I}_2 \neq0$. The number of positive, real eigenvalues of the operator $N$ satisfies
		\begin{equation}\label{lower_bound}
			n_+(N) \geq |P-Q-\mathfrak{c}|,
		\end{equation}
		where 
		\begin{equation}\label{}
			\mathfrak{c} = \begin{cases}
				1 & \,	\mathcal{I}_1>0, \, \mathcal{I}_2<0, \\
				0 & \,\mathcal{I}_1\mathcal{I}_2>0, \\
				-1  & \,\mathcal{I}_1<0, \,\mathcal{I}_2>0.
			\end{cases}
		\end{equation}
	\end{theorem}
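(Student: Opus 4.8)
The plan is to realise the positive real eigenvalues of $N$ as crossings of a path of Lagrangian subspaces and to count them, with sign, via the homotopy invariance of the Maslov index around a \emph{Maslov box} in the $(x,\la)$-plane. Writing the eigenvalue equation $Nw=\la w$ as a first-order Hamiltonian system in $\R^8$, I will work with the four-dimensional Lagrangian unstable bundle $\E^u(x,\la)$ of the full system — the solutions decaying as $x\to-\infty$ — and the stable subspace $\Ss(\la)$ of the asymptotic matrix; under \eqref{assumptions}--\eqref{assumption2} the asymptotic system is hyperbolic, so these are well defined, and a positive real eigenvalue of $N$ is precisely a value $\la>0$ at which $\E^u(x,\la)$ meets $\Ss(\la)$ nontrivially as $x\to+\infty$. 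Fixing $\Lambda$ beyond the largest positive real eigenvalue (the point spectrum off the imaginary axis is discrete and bounded), I consider the rectangle $[-\infty,+\infty]_x\times[0,\Lambda]_\la$. Since $\E^u$ is continuous on this contractible region, its Maslov index around the boundary, computed relative to the reference path $\Ss(\la)$, vanishes, so the four side-contributions sum to zero.

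Three of the four sides I will dispatch directly. On the left side $x=-\infty$ the bundle equals the unstable subspace $\U(\la)$ of the asymptotic matrix, which is transverse to $\Ss(\la)$ for every $\la$ by hyperbolicity, so this side carries no crossings; on the top side $\la=\Lambda$ a standard large-$\la$ transversality estimate shows $\E^u(x,\Lambda)$ is transverse to $\Ss(\Lambda)$ for all $x$, so it too contributes nothing. The bottom side $\la=0$ is where \cref{thm:Lpm_eval_counts} enters: there the system decouples, $\E^u(x,0)=\E^u_+(x,0)\oplus\E^u_-(x,0)$ and $\Ss(0)=\Ss_+(0)\oplus\Ss_-(0)$, so the crossings in $x$ are exactly the $L_+$- and $L_-$-conjugate points, numbering $p_c=P$ and $q_c=Q$. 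The key sign observation is that the off-diagonal blocks $L_+$ and $-L_-$ of $N$ endow the spatial crossing form with opposite definiteness on the two summands, so that — with the monotonicity (up to the degeneracy already handled in \cref{thm:Lpm_eval_counts}) preventing cancellation within each block — the signed bottom contribution is $\pm(P-Q)$.

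The crux, and the main obstacle, is the crossing at the corner $(x,\la)=(+\infty,0)$, which corresponds to the kernel of $N$; by \cref{hypo:simplicity_assumption} this kernel is $\spn\{(\phi_x,0)^\top,(0,\phi)^\top\}$, a two-dimensional intersection. This crossing is non-regular in the spectral direction: as is characteristic of operators of the form \eqref{N:intro} \cite{CCLM23}, the first-order crossing form in $\la$ vanishes identically, so its contribution is invisible to the usual first-order form and must be extracted from the second-order crossing form of \cite{PT09}. I will construct root functions $\la\mapsto w(\la)=w_0+\la w_1+O(\la^2)$ with $w_0\in\ker N$; matching orders forces the Jordan-chain equation $Nw_1=w_0$, which for $w_0=(\phi_x,0)^\top$ and $w_0=(0,\phi)^\top$ reduces to $-L_-\widehat v=\phi_x$ and $L_+\widehat u=\phi$ respectively (the solvability conditions $\phi_x\perp\ker L_-$ and $\phi\perp\ker L_+$ both reduce to $\int_{-\infty}^{\infty}(\phi^2)_x\,dx=0$ and hold automatically). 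Evaluating the second-order form on the two kernel directions then yields contributions governed separately by $\mathcal{I}_1=\int_{-\infty}^{\infty}\phi_x\widehat v\,dx$ and $\mathcal{I}_2=\int_{-\infty}^{\infty}\phi\,\widehat u\,dx$; because the corner sits at the endpoint $\la=0$ of the spectral interval and the two directions carry opposite orientations (again a reflection of the blocks $-L_-$ and $L_+$), these combine into the integer correction $\mathfrak{c}\in\{-1,0,1\}$, which vanishes exactly when $\mathcal{I}_1$ and $\mathcal{I}_2$ share a sign. The delicate points on which the whole bound rests are verifying that the \cite{PT09} second-order form genuinely evaluates to these two integrals, that the two kernel directions decouple (the cross symplectic pairings vanish), and that the hypothesis $\mathcal{I}_1,\mathcal{I}_2\neq0$ renders the form nondegenerate so that $\mathfrak{c}$ is unambiguous.

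Assembling the pieces, homotopy invariance forces the signed Maslov index along the right side ($x=+\infty$, $\la$ increasing) to balance the bottom and corner contributions, giving $\pm(P-Q-\mathfrak{c})$. The right side records the positive real eigenvalues of $N$, but because $N$ is not selfadjoint the $\la$-crossing form there need not be definite, so these crossings may occur with either sign; consequently the number of crossings counted with multiplicity dominates the absolute value of the signed index. This yields $n_+(N)\geq|P-Q-\mathfrak{c}|$, and simultaneously explains why only a lower bound, rather than an equality, is available.
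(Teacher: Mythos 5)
Your overall strategy is the one the paper uses -- a Maslov box in $(x,\la)$, vanishing contributions on the left and top edges, the decoupling at $\la=0$ feeding in \cref{thm:Lpm_eval_counts} to produce $\pm(P-Q)$, a second-order crossing form in $\la$ at the corner governed by $\mathcal{I}_1,\mathcal{I}_2$, and the final inequality coming from the non-definiteness of the $\la$-crossing form on the right edge. But there is one genuine gap in the setup. You place the right edge of the box at $x=+\infty$ and assert that $\E^u$ is continuous on the closed rectangle $[-\infty,+\infty]\times[0,\Lambda]$, so that homotopy invariance kills the boundary index. That assertion is false: the map $\la\mapsto\E^u(+\infty,\la)=\lim_{x\to\infty}\E^u(x,\la)$ is discontinuous in the Grassmannian precisely at the eigenvalues of $N$ (off the spectrum the limit is $\U(\la)$, which is bounded away from the train of $\Ss(\la)$, while at an eigenvalue the limit lands on the train). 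Since the eigenvalues are exactly the crossings you want to count on that edge, the homotopy argument cannot be run with the edge at $x=+\infty$. The paper's fix is to truncate at $x=\ell$ for $\ell$ large, use the Lagrangian \emph{pair} $(\E^u(x,\la),\E^s(\ell,\la))$ as the object whose Maslov index vanishes around the box, and then prove separately (via a stratum homotopy and an analysis of the final crossing) that the $\Gamma_1$ contribution computed against $\E^s(\ell,0)$ agrees with the count of conjugate points defined against $\Ss(0)$. Some version of this truncation is unavoidable and is missing from your argument.

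A second, smaller bookkeeping issue concerns the corner. In the paper, $\mathfrak{c}$ is the sum of two pieces: the arrival along $\Gamma_1$ into the two-dimensional corner crossing, which contributes $+1$ (the $L_+$ summand contributes $0$ and the $L_-$ summand contributes $+1$, by the opposite monotonicity of the two blocks), and the departure along $\Gamma_2$, which contributes $-n_-(\mathfrak{m}^{(2)}_0)\in\{0,-1,-2\}$ from the second-order form you describe. Your write-up only computes the $\la$-direction piece and appeals vaguely to "opposite orientations" to land in $\{-1,0,1\}$; without the explicit $+1$ from the spatial arrival you would obtain $\mathfrak{c}\in\{0,-1,-2\}$ and the wrong case table. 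The Jordan-chain construction, the solvability conditions, and the identification of the second-order form with $\mathcal{I}_1$ and $\mathcal{I}_2$ (including the decoupling of the two kernel directions) are otherwise exactly as in the paper.
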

	\begin{rem}
		In the case that either $\mathcal{I}_1$ or $\mathcal{I}_2$ vanishes, an extra calculation  is needed to compute the correction term $\mathfrak{c}$ (the definition of which is given in \eqref{mathfrakc}); for details, see \cref{sec:concluding_rems}.
	\end{rem}
	\begin{rem}\label{rem:everywhere_applicable}
		In this work we make no comment on the existence of soliton solutions to \eqref{4NLS}, i.e. homoclinic solutions to \eqref{1st_order_SWE_Ham}. Rather, we prove that if such a solution exists, then its associated linearised operator $N$ satisfies \cref{thm:main_lower_bound}. { Our analysis makes no assumptions on the specific structure of $\phi$ -- apart from it being a homoclinic solution to \eqref{1st_order_SWE_Ham} -- and hence our results are applicable to all single \emph{and} multi-hump solitons of \eqref{4NLS}.}
	\end{rem}

	The following Jones-Grillakis instability theorem \cite{J88,Grill88,KapProm} is an immediate consequence of \cref{thm:main_lower_bound}.
	\begin{cor}\label{cor:JonesGrillakis}
		Standing waves for which $P-Q \neq -1,0,1$ are spectrally unstable.
	\end{cor}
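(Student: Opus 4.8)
The plan is to reduce the corollary to the lower bound of \cref{thm:main_lower_bound} via an elementary estimate, the only real input being that the correction term satisfies $|\mathfrak{c}|\leq 1$. First I would recall that spectral instability requires only the existence of a single element of $\spec(N)$ in the open right half plane. As noted in \cref{sec:setup2}, under \eqref{assumptions} the essential spectrum of $N$ lies on the imaginary axis and is bounded away from the origin, and the Hamiltonian structure of $N$ forces the usual four-fold spectral symmetry. In particular, any positive real eigenvalue is already an element of the open right half plane, so to certify instability it suffices to prove $n_+(N)\geq 1$.

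Next I would apply \cref{thm:main_lower_bound}, which yields $n_+(N)\geq|P-Q-\mathfrak{c}|$ with $\mathfrak{c}\in\{-1,0,1\}$. The hypothesis $P-Q\neq -1,0,1$ is equivalent to $|P-Q|\geq 2$, so by the reverse triangle inequality
\begin{equation*}
    n_+(N) \;\geq\; |P-Q-\mathfrak{c}| \;\geq\; |P-Q|-|\mathfrak{c}| \;\geq\; 2-1 \;=\; 1,
\end{equation*}
and spectral instability follows at once.

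The one point deserving care is that the explicit trichotomy for $\mathfrak{c}$ in \cref{thm:main_lower_bound} is stated under the standing assumption $\mathcal{I}_1,\mathcal{I}_2\neq 0$, whereas the corollary as stated carries no such restriction. To cover the degenerate cases I would invoke the definition of $\mathfrak{c}$ in \eqref{mathfrakc} as the signed Maslov contribution of the single crossing at the zero eigenvalue of $N$: because this contribution is controlled by the geometry of $\ker(N)$, which under \cref{hypo:simplicity_assumption} is two-dimensional and splits as $\spn\{(\phi_x,0)^\top,(0,\phi)^\top\}$, one expects the bound $|\mathfrak{c}|\leq 1$ to persist even when $\mathcal{I}_1$ or $\mathcal{I}_2$ vanishes. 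Verifying this \emph{uniform} bound on $\mathfrak{c}$ in the degenerate regime — deferred to the analysis in \cref{sec:concluding_rems} — is the only genuinely nontrivial step; the remainder of the argument is just the triangle-inequality estimate displayed above, which is why the result is presented as an immediate consequence.
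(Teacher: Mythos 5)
Your argument is correct and is precisely the one the paper intends: the corollary is stated as an immediate consequence of \cref{thm:main_lower_bound}, and your reverse-triangle-inequality step $n_+(N)\geq |P-Q-\mathfrak{c}|\geq |P-Q|-1\geq 1$ together with the observation that a positive real eigenvalue certifies spectral instability is exactly that deduction. Your side remark on the degenerate case $\mathcal{I}_1\mathcal{I}_2=0$ is a reasonable caution, though the corollary is naturally read as inheriting the standing hypotheses of the theorem, so no further verification is required for the statement as given.
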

	
	We also have the following VK-type criterion \cite{VK73,pelinovsky}.
	\begin{theorem}\label{thm:VK_criterion}
		Suppose $P=1$ and $Q=0$. The standing wave $\widehat\psi$ is spectrally unstable if $\mathcal{I}_2>0$ and is spectrally stable if $\mathcal{I}_2<0$.
	\end{theorem}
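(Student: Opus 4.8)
The plan is to treat the two claims separately: the instability statement will follow directly from \Cref{thm:main_lower_bound}, while the stability statement requires an independent \emph{upper} bound on $n_+(N)$, obtained by converting the real part of the eigenvalue problem into a self-adjoint one. First I would record a structural consequence of the hypothesis $Q=0$. Since the essential spectrum of $L_-$ lies on the negative half-line and $Q=n_+(L_-)=0$, the operator $L_-$ is negative semidefinite with $\ker L_-=\spn\{\phi\}$; equivalently $M:=-L_-\ge 0$ with $\ker M=\spn\{\phi\}$. Because $\langle \phi_x,\phi\rangle=\tfrac12\int(\phi^2)_x\,dx=0$, the vector $\phi_x$ lies in $\ran(L_-)$, so $\widehat v=-L_-^{-1}\phi_x$ is well defined modulo $\ker L_-$ and
\[
\mathcal{I}_1=\langle \phi_x,\widehat v\rangle=-\langle L_-^{-1}\phi_x,\phi_x\rangle>0,
\]
since $\phi_x\notin\ker L_-$. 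Thus whenever $Q=0$ we are automatically in the rows of the definition of $\mathfrak{c}$ with $\mathcal{I}_1>0$, and only $\sgn\mathcal{I}_2$ is free. In the unstable case $\mathcal{I}_2>0$ we then have $\mathcal{I}_1\mathcal{I}_2>0$, so $\mathfrak{c}=0$, and \Cref{thm:main_lower_bound} gives $n_+(N)\ge|P-Q-\mathfrak{c}|=|1-0-0|=1$, whence a positive real eigenvalue exists and the wave is spectrally unstable.

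For the stable case $\mathcal{I}_2<0$ we have $\mathfrak{c}=1$, and the bound $n_+(N)\ge|1-0-1|=0$ is vacuous, so I must produce the matching upper bound $n_+(N)=0$. The idea is the classical Vakhitov--Kolokolov reduction adapted to this fourth-order, sign-reversed setting. For $\la\neq0$ the system $N(u,v)^\top=\la(u,v)^\top$ is equivalent to $Mv=\la u$ and $L_+u=\la v$, whence $L_+Mv=\la^2 v$. Since $M\ge0$ admits a square root $M^{1/2}$ (with $\ker M^{1/2}=\spn\{\phi\}$), the operators $L_+M$ and the self-adjoint $\widetilde L:=M^{1/2}L_+M^{1/2}$ share their nonzero spectrum, and $\la\in\R\setminus\{0\}$ forces $\la^2>0$. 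Hence the positive real eigenvalues of $N$ are in bijection with the positive eigenvalues of $\widetilde L$; evaluating the form on $y=M^{1/2}w$, whose range is dense in $(\ker L_-)^\perp=\{\phi\}^\perp$, identifies
\[
n_+(N)=n_+(\widetilde L)=n_+\!\left(L_+|_{\{\phi\}^\perp}\right),
\]
the Morse index of $L_+$ constrained to $\{\phi\}^\perp$. A Sylvester's-law computation for this rank-one constraint, using $\widehat u=L_+^{-1}\phi$ and the identity $\mathcal{I}_2=\langle L_+^{-1}\phi,\phi\rangle$, shows this constrained index equals $0$ exactly in the regime $\mathcal{I}_2<0$ (and equals $P=1$ when $\mathcal{I}_2>0$, consistent with the unstable case above). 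Therefore $n_+(N)=0$ and the wave is spectrally stable.

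The main obstacle is this stability direction: the Maslov lower bound contributes nothing, so the real content is the upper bound $n_+(N)=0$, and monotonicity in $\la$ cannot simply be invoked, since the crossing form degenerates at $\la=0$ — precisely the non-regular crossing responsible for $\mathfrak{c}$. The reduction to $\widetilde L$ sidesteps this, but three points need care. First, the kernel of $M^{1/2}$ and the fact that its range is only dense (not closed) in $\{\phi\}^\perp$ must be handled when identifying the constrained index, so the finite-dimensional maximal positive subspace is approximated within $\ran M^{1/2}$. Second, one must match algebraic multiplicities across the correspondence $\la\leftrightarrow\la^2$ so that no positive real eigenvalue is lost through the generalized kernel of $N$ at $\la=0$. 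Third, the sign bookkeeping in the constrained-eigenvalue lemma, tying the jump in $n_+(L_+|_{\{\phi\}^\perp})$ to $\sgn\mathcal{I}_2$ (equivalently, via $L_+\phi_\be=\phi$, to $\sgn\tfrac{d}{d\be}\|\phi\|_{L^2}^2$), is where the two halves of the theorem are pinned down and must be tracked consistently with the conventions fixing $\mathfrak{c}$.
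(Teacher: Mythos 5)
Your first half (instability when $\mathcal{I}_2>0$) is correct and is essentially the paper's own argument: $Q=0$ forces $L_-\le 0$ with $\ker L_-=\spn\{\phi\}$, hence $\mathcal{I}_1=-\langle L_-^{-1}\phi_x,\phi_x\rangle>0$, so $\mathfrak{c}=0$ and \cref{thm:main_lower_bound} gives $n_+(N)\ge 1$. For the stability half, however, you depart from the paper, and your stated reason for doing so is unfounded. The paper does not need an independent upper bound from a selfadjoint reduction: \cref{lemma:monotone_cross} shows that when $L_-\le 0$ the first-order crossing form in $\la$ is \emph{positive at every crossing with $\la_0>0$}, so $\Mas(\E^u(\ell,\cdot),\E^s(\ell,\cdot);[\e,\la_\infty])=n_+(N)$ exactly, and \eqref{eq:proof1} upgrades the inequality to the equality $n_+(N)=P-Q-\mathfrak{c}=1-\mathfrak{c}$. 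The degeneracy of the $\la$-crossing form occurs only at the corner $\la_0=0$, which is excised into $\mathfrak{c}$; monotonicity is only ever invoked on $[\e,\la_\infty]$.

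The genuine gap is in your Sylvester step: the asserted outcome is sign-reversed. The constrained-index formula (apply the negative-index version to $-L_+$, whose essential spectrum is positive and bounded away from zero) reads
\[
n_+\!\left(L_+|_{\{\phi\}^\perp}\right)=n_+(L_+)-n_+(D)-n_0(D),\qquad D=\langle L_+^{-1}\phi,\phi\rangle=\mathcal{I}_2,
\]
which with $P=1$ gives $0$ when $\mathcal{I}_2>0$ and $1$ when $\mathcal{I}_2<0$ --- the opposite of what you claim. (Two-dimensional check: for $L=\diag(1,-1)$ and $\phi=e_1$ one has $D=1>0$ while $L|_{\{\phi\}^\perp}$ is multiplication by $-1$, so the constrained positive index is $0$, not $1$.) With the correct sign, your chain $n_+(N)=n_+(\widetilde L)=n_+(L_+|_{\{\phi\}^\perp})$ yields $n_+(N)=1$ when $\mathcal{I}_2<0$, which contradicts the stability you set out to prove and is incompatible with your own first half. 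So the stability direction is not established, and correcting the sign turns your two halves into a contradiction rather than a proof: your (corrected) reduction is precisely the classical Vakhitov--Kolokolov route and returns ``stable iff $2\mathcal{I}_2=\tfrac{d}{d\be}\|\phi\|_{L^2}^2>0$''. Before this argument can be salvaged you must either locate an error in the reduction $n_+(N)=n_+(L_+|_{\{\phi\}^\perp})$ (each step of which is standard) or reconcile its output with the sign conventions entering $\mathfrak{c}$ and the statement of the theorem; as written, the proposal cannot be accepted.
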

	\begin{rem}
		If there exists a $C^1$ family of solutions $\be\to \phi (\cdot;\be) \in H^4(\R)$ to the standing wave equation \eqref{SWE}, then $\widehat{u}=\p_\be \phi (\cdot;\be)$ and the integral $\mathcal{I}_2$ is precisely that appearing in the VK criterion for standing waves in the classical (second-order) NLS equation (see \cite[\S4.2]{pelinovsky}), i.e.
		\begin{equation*}\label{}
			\mathcal{I}_2 = \f{1}{2}\pde{}{\be} \int_{-\infty}^{\infty} \phi^2 dx.
		\end{equation*} 
	\end{rem}
	
	{
	\begin{rem}\label{rem:alternate_lower_bound}
		\Cref{thm:Lpm_eval_counts} provides a convenient numerical tool for computing $P$ and $Q$. In light of this, the lower bound in \cref{thm:main_lower_bound} and the sufficient condition in \cref{cor:JonesGrillakis} may be computed in terms of the counts of $L_+$ and $L_-$ conjugate points; for example, \eqref{lower_bound} is equivalent to
		\begin{equation}\label{}
			n_+(N) \geq\left  | p_c - q_c - \mathfrak{c}\, \right|.
		\end{equation}
	\end{rem}
	}
	The paper is organised as follows. { The focus of the early sections of the paper will be on the eigenvalue problem for $N$ and setting up the proof of \cref{thm:main_lower_bound}.} In particular, in \cref{sec:setup2} we write down the first order system associated with \eqref{eq:N_evp} and compute the essential spectrum of $N$. We also define the stable and unstable bundles, the main objects of our analysis. In \cref{sec:symplectic_maslov}, we provide some background material on the Maslov index, which includes the definition of higher order crossing forms due to Piccione and Tausk \cite{PT09}, before describing the homotopy argument that will lead to the proof of the lower bound in \cref{thm:main_lower_bound}. \Cref{sec:L+L__counts} is then devoted to the analysis of the $L_+$ and $L_-$ eigenvalue problems and the proof of \cref{thm:Lpm_eval_counts}. \Cref{sec:proof_lower_bound} is devoted to the proofs of \cref{thm:main_lower_bound,thm:VK_criterion}. In \cref{sec:KH_stability} we apply our theory to confirm the spectral stability of the Karlsson and H\"o\"ok solution \eqref{KH_soln}, which will involve numerically computing the number of $L_+$ and $L_-$ conjugate points. { We focus on this application because it is the only soliton solution for which we had numerics readily available. Indeed, applying our results to the multi-hump solitons found by numerical means in \cite{BGBK21}, as well as the pure quartic solitons obtained by the analytical methods outlined in \cite{BMQ24}, will be the subject of future work.} In \cref{sec:concluding_rems} we give some concluding remarks on our analysis, and in \cref{sec:appendixA} we complete the proof of \cref{thm:Lpm_eval_counts} by removing a certain hypothesis used in the proof presented in \cref{sec:L+L__counts}.
	
	\section{Set-up}\label{sec:setup2}
	
	We first compute the essential spectrum of the operator $N$. Using the change of variables 
	\begin{gather}\label{subs}
		\begin{aligned}
			u_1&=u'' + \sigma_2 u,		& \qquad 	u_2&=u,	& \qquad u_3&= u',	& \qquad  u_4&=u''',  \\
			v_1&=v'' + \sigma_2  v,	& \qquad 	v_2&= -v, &  \qquad v_3&= -v', 	&\qquad   v_4&= v''', \\
		\end{aligned}
	\end{gather}
	we  convert \eqref{uv_system} to the (infinitesimally symplectic) first order system
	\begin{equation}\label{1st_order_sys}
		\begin{pmatrix}
			u_1 \\ v_1 \\ u_2 \\ v_2 \\ u_3 \\ v_3 \\ u_4 \\ v_4 \\
		\end{pmatrix} ' = 
		\left(\begin{array}{@{}c|c@{}}
			0 &  \begin{matrix}
				\sigma_2  & 0 & 1 & 0 \\
				0 & -\sigma_2 & 0 & 1 \\
				1 & 0 & 0 & 0 \\
				0 & 1 & 0 & 0 
			\end{matrix}  \\
			\hline \\[-4mm]
			\begin{matrix}
				1 & 0 & -\sigma_2  & 0  \\ 
				0 & -1 & 0  & -\sigma_2   \\ 
				-\sigma_2  & 0 & \al(x)  & \la  \\
				0 & -\sigma_2 & \la    & \eta(x)
			\end{matrix}  & 0 
		\end{array}\right)
		\begin{pmatrix}
			u_1 \\ v_1 \\ u_2 \\ v_2 \\ u_3 \\ v_3 \\ u_4 \\ v_4 \\
		\end{pmatrix},
	\end{equation}
	where 
	\begin{align*}
		\al(x)\coloneqq  3 \phi(x)^2 -\be +1, \qquad \eta(x)  \coloneqq  - \phi(x)^2 +\be - 1.
	\end{align*}
	Setting
	\begin{equation}
		B =\begin{pmatrix}
			\sigma_2 & 0 & 1 & 0 \\
			0 & -\sigma_2   & 0 & 1 \\
			1 & 0 & 0 & 0 \\
			0 & 1 & 0 & 0 
		\end{pmatrix}, \qquad C(x;\la) = \begin{pmatrix}
			1 & 0 & -\sigma_2  & 0  \\ 
			0 & -1 & 0  & -\sigma_2   \\ 
			-\sigma_2  & 0 & \al(x)  & \la   \\
			0 & -\sigma_2 & \la   & \eta(x)
		\end{pmatrix},
	\end{equation}
	we can write \eqref{1st_order_sys} as
	\begin{equation}\label{1st_order_sys_N_vector}
		\mathbf{w}_x = A(x;\la) \mathbf{w},
	\end{equation}
	where 
	\begin{equation*}
		\mathbf{w} = (u_1, v_1, u_2, v_2, u_3, v_3, u_4, v_4)^\top, \qquad 	A(x;\la) =  \begin{pmatrix}
			0 & B\\ C(x;\la) & 0
		\end{pmatrix}.
	\end{equation*}
	The asymptotic system for \eqref{1st_order_sys} is given by
	\begin{equation}\label{asymp_system}
		\mathbf{w}_x = A_\infty(\la)\mathbf{w}, \qquad A_\infty(\la) \coloneqq \lim_{x\to\pm\infty}A(x;\la).
	\end{equation}
	(The endstates as $x\to\pm\infty$ are the same because $\phi$ is homoclinic to the origin.) It now follows from \cite[Theorem 3.1.11]{KapProm} that the essential spectrum of $N$ is given by the set of $\la\in \C$ for which the matrix $A_\infty(\la)$ has a purely imaginary eigenvalue. A short calculation shows that
	\begin{equation}\label{}
		\esspec(N)  = \{ \la\in \C: \lambda ^2= -\left( - k ^4  +\sigma_2  k ^2 - \beta \right )^2\,\,\,\text{for some}\,\,\, k\in\R \}.
	\end{equation}
	{ The biquadratic polynomial $ -k ^4  +\sigma_2  k ^2 - \beta $ has the global maximum value $-\be$ if $\sigma_2=-1$, or $\f{1}{4} - \be$ if $\sigma_2=1$. Under \eqref{assumptions}, we therefore have
	\begin{equation}\label{essspecN}
		\esspec(N) = \begin{cases}
			 \big(-i\infty, -i \be\big] \cup \big[ i\be,+i\infty  \big)   & \sigma_2=-1, \\
			 \big (-i\infty, -i (\be-\f{1}{4}) \big]\cup \big[ i(\be-\f{1}{4}),+i\infty \big)    & \sigma_2=1,
		\end{cases}
		\end{equation}
	so that $\esspec(N) \subset i\R$ with a spectral gap about the origin. 
	
	It follows that the operator $N-\la I$ of  \eqref{eq:N_evp}--\eqref{domains} is Fredholm for all $\la\in\R$. Hence, the densely-defined closed linear operator 
	\[
	T(\la) : H^1(\R,\R^8) \lra L^2(\R,\R^8), \qquad T(\la)\mathbf{w} \coloneqq \de{\mathbf{w}}{x} - A(x ;\la)\mathbf{w},
	\]
	associated with \eqref{1st_order_sys_N_vector} is also Fredholm (see \cite[\S3.3]{sandstede02}). Moreover, the asymptotic matrix $A_\infty(\la)$ is hyperbolic for all $\la\in\R$, with four eigenvalues with positive real part and four with negative real part:
	\begin{equation}\label{A_asymp_evals}
		\spec(A_\infty(\la)) = \bigg\{\pm\frac{\sqrt{- \sigma_2 \pm \sqrt{1-4 \beta \pm 4  \lambda i \,}}}{\sqrt{2}}\bigg\}.
	\end{equation}
	We denote the corresponding four-dimensional stable and unstable subspaces by $\Ss(\la)$ and $\U(\la)$ respectively. }
	
	By \cite[Theorem 3.2, Remark 3.3]{sandstede02}, \eqref{1st_order_sys} then has exponential dichotomies on $\R^+$ and $\R^-$. That is, for each fixed $\la\in\R$, on each of the intervals $\R^+$ and $\R^-$ the space of solutions to \eqref{1st_order_sys} is the direct sum of two subspaces: one consisting solely of the solutions that decay exponentially for decreasing $x$, and the other of solutions that decay for increasing $x$. By flowing under \eqref{1st_order_sys}, each of these solution spaces can be extended to all of $\R$. Thus we may define, for $(x,\la)\in\R\times \R$, the two-parameter families of subspaces, 
	\begin{align}\label{bundles_defn}
		\begin{split}
			\E^u(x,\la) &\coloneqq \{ \xi\in \R^8 : \xi = \mathbf{w}(x;\la),\,\, \mathbf{w} \,\,\text{solves}\,\,\eqref{1st_order_sys} \,\, \text{and}\,\, \mathbf{w}(x;\la) \to 0 \,\,\text{as} \,\,x\to-\infty \}, \\
			\E^s(x,\la) &\coloneqq \{ \xi\in \R^8 :\xi = \mathbf{w}(x;\la), \,\, \mathbf{w} \,\,\text{solves}\,\,\eqref{1st_order_sys} \,\, \text{and}\,\, \mathbf{w}(x;\la) \to 0 \,\,\text{as} \,\,x\to+\infty \},
		\end{split}
	\end{align}
	corresponding to the evaluation at $x\in\R$ of the solutions to \eqref{1st_order_sys} that decay (exponentially) as $x \to -\infty$ and as $x\to +\infty$, respectively. Following \cite{AGJ90,Corn19}, we call these families the \emph{unstable} and \emph{stable bundles} respectively. Considering $\U(\la),\Ss(\la), \E^u(x,\la), \E^s(x,\la)$ for each $x\in\R$ and $\la\in \R$ as points in the Grassmannian of four-dimensional subspaces of $\R^8$,
	\[
	\Gr_4(\R^8) = \{ V\subset \R^8: \dim V=4\},
	\]
	which (following \cite{F04,HLS17}) we equip with the metric $d(V,U) = \| \mathbb{P}_V - \mathbb{P}_U \|$, where $\mathbb{P}_V$ is the orthogonal projection onto $V$ and $\|\cdot\|$ is any matrix norm, we have that
	\begin{equation}\label{bundle_limits}
		\lim_{x\to-\infty} \E^u(x,\la) = \U(\la), \qquad \lim_{x\to+\infty} \E^s(x,\la) = \Ss(\la).
	\end{equation}
	That is, the orthogonal projections onto $\E^u(x,\la)$ and $\E^s(x,\la)$ converge to those on $\U(\la)$ and $\Ss(\la)$ as $x\to-\infty$ and as $x\to+\infty$, respectively; see \cite[Corollary 2]{PSS97}.

	The key feature of system \eqref{1st_order_sys} that makes it amenable to the Maslov index is the infinitesimally symplectic structure of the coefficient matrix $A(x;\la)$, i.e.
	\begin{equation}\label{infsymp}
		A(x;\la)^TJ + JA(x;\la) = 0,
	\end{equation}
	which follows from the symmetry of $B$ and $C(x;\la)$. This is the motivation for the choice of substitutions \eqref{subs}. Consequently, \eqref{1st_order_sys} induces a flow on the manifold of Lagrangian planes. In particular, the unstable and stable bundles of \eqref{1st_order_sys} are Lagrangian planes of $\R^{8}$ for all $x$ and all $\la$. In addition, we have that $\la_0$ is an eigenvalue of $N$ if and only if for any (and hence all) $x\in\R$ we have
	\[
	\E^u(x,\la_0)\cap\E^s(x,\la_0) \neq \{0\}.
	\]
	Furthermore, the geometric multiplicity of the eigenvalue coincides with the dimension of the Lagrangian intersection,
	\begin{equation}\label{dims_equal}
		\dim \E^u(x,\la_0)\cap\E^s(x,\la_0) = \dim\ker (N-\la_0).
	\end{equation}
	By exploiting homotopy invariance of the Maslov index, we can determine the existence of such intersections by instead analysing the evolution of the unstable bundle $\E^u(x,\la_0)$ when $\la_0=0$; this will amount to counting $L_+$ and $L_-$ conjugate points. This is explained in \cref{subsec:symplectic_evals}.

	\section{A symplectic approach to the eigenvalue problem}\label{sec:symplectic_maslov}

	\subsection{The Maslov index via higher order crossing forms}\label{subsec:maslov_index2}
	In this section we follow the discussions in \cite{A67,RS93,GPP04,GPP_full}. Consider $\R^{2n}$ equipped with the symplectic form 
	\begin{equation}\label{omega2}
		\w(u,v) = \langle Ju,v\rangle_{\R^{2n}}, \qquad J = \begin{pmatrix}
			0_n & -I_n \\ I_n & 0_n 
		\end{pmatrix}.
	\end{equation}
	A \emph{Lagrangian plane} or \emph{Lagrangian subspace} of $\R^{2n}$ is an $n$ dimensional subspace upon which the symplectic form vanishes. We denote the Grassmannian of Lagrangian subspaces by
	\begin{equation}\label{}
		\cL(n) \coloneqq \{ \Lambda \subset \R^{2n} : \dim \Lambda = n, \,\,\w(u,v) = 0 \,\,\forall\,\, u,v \in \Lambda\}.
	\end{equation}
	A \emph{frame} for a Lagrangian subspace $\Lambda\in\cL(n)$ is a $2n\times n$ matrix with rank $n$ whose columns span $\Lambda$. Such a frame may be written in block form by
	\begin{equation*}\label{}
		\begin{pmatrix}
			X \\ Y
		\end{pmatrix}, \qquad X,Y \in \R^{n\times n},
	\end{equation*}
	where $X^\top Y = Y^\top X$; the symmetry of $X^\top Y$ follows from the vanishing of \eqref{omega2}. Such a frame is not unique; right multiplication by an invertible $n\times n$ matrix will yield an alternate frame for $\Lambda$. In particular, if $X$ is invertible then an alternate frame is given by
	\begin{equation*}\label{}
		\begin{pmatrix}
			I \\ YX^{-1}
		\end{pmatrix}, \quad \text{where} \quad\left (YX^{-1}\right )^\top = YX^{-1}.
	\end{equation*}
	Let $\Lambda:[a,b]\to\cL(n)$ be a path in $\cL(n)$. Its Maslov index is, roughly speaking, a signed count of its intersections with a certain codimension-one set. More precisely, Arnol'd \cite{A67} gave the following definition for non-closed paths satisfying certain conditions.

	Fix $V\in\cL(n)$. The \emph{train} $\cT(V)$ of $V$ is the set of all Lagrangian planes that intersect $V$ nontrivially; it may be decomposed into a family of submanifolds via $\cT(V) = \bigcup_{k=1}^n \cT_k(V)$, where $\cT_k(V)\coloneqq \{W\in\cL(n):\dim(W\cap V) = k\}$ is the set of  Lagrangian planes that intersect $V$ in a subspace of dimension $k$. It is shown in \cite{A67} that $\codim \cT_k(V) = k(k+1)/2$; in particular, $\codim \cT_1(V)=1$. From the fundamental lemma of \cite{A67}, $\cT_1(V)$ is two-sidedly embedded in $\cL(n)$; that is, $\cT_1(V)$ is transversely oriented by the velocity field of some (and then of any) one-parameter positive definite Hamiltonian \cite{A67,A85}. Such a vector field thus defines a `positive' and a `negative' side of $\cT_1(V)$. Define a \emph{crossing} to be a value $t_0\in [a,b]$ such that $\Lambda(t_0)\cap V \neq \{0\}$, i.e. $\Lambda(t_0)\in\cT(V)$. The \emph{Maslov index} of any continuous curve $\Lambda:[a,b]\to\cL(n)$ with endpoints lying off the train and with crossings lying only in $\cT_1(V)$ is then defined to be $\nu_+-\nu_-$, where $\nu_+$ is the number of points of passage of $\Lambda$ from the negative to the positive side of $\cT_1(V)$, and $\nu_-$ is defined conversely.

	Arnol'd's definition was extended by Robbin and Salamon \cite{RS93} to paths with arbitrary endpoints and with crossings possibly lying in $\cT_k(V)$ for $k\geq 2$. This was done by exploiting an identification of the tangent space of $\cL(n)$ at some $\Lambda\in\cL(n)$ with the space $S^2\Lambda$ of quadratic forms on $\Lambda$. This lead to the construction of the crossing form, a quadratic form associated with each crossing whose signature determines local contributions to the Maslov index. The definition given by Robbin and Salamon requires that all crossings are regular, with the definition extended to all continuous Lagrangian paths via homotopy invariance (see \cref{prop:enjoys}). As outlined in the introduction, it is desirable to be able to compute the Maslov index directly, without having to use perturbative arguments.

	Piccione and Tausk \cite{PT09} provided the means to do exactly that, defining the Maslov index for analytic Lagrangian paths with non-regular crossings via the \emph{partial signatures} of higher order crossing forms. While strictly speaking their definition is given via the fundamental groupoid (see \cite[\S5.2]{PT09}), and shown to be \emph{computable} via the partial signatures listed below, for our purposes it will suffice to use the latter computational tool as our definition of the Maslov index. This computable formula was previously given by Giamb\`{o}, Piccione and Portaluri \cite{GPP_full,GPP04} through the related notion of the spectral flow of an associated family of symmetric bilinear forms (see \cite[Proposition 3.11]{GPP_full}); for details of the equivalence of these definitions, see \cite[\S4 and \S 5.5]{PT09}.

	The following notions can be found in \cite[\S5.5]{PT09}. Suppose that $\Lambda:[a,b] \to \cL(n)$ is an analytic path of Lagrangian subspaces, and $t=t_0$ is a crossing, that is, $\Lambda(t_0)\cap V\neq \{0\}$. A \emph{$V$-root function} (or simply a \emph{root function} when the choice of $V$ is clear) for $\Lambda$ at $t_0$ is a differentiable mapping $w:[t_0-\e,t_0+\e] \to \R^{2n}$, $\e>0$, such that $w(t)\in\Lambda(t)$ and $w(t_0) \in V$. The \emph{order} of $w$, $\text{ord}(w)$, is the smallest positive integer $k$ such that $w^{(k)}(t_0) \notin V$. This allows one to define a sequence of nested subspaces $W_k(\Lambda,V,t_0)$, $k\geq 1$, via
	\begin{equation*}
		W_k(\Lambda,V,t_0)\coloneqq \{w_0: \exists \, \,\text{a $V$-root function\,\,} w \,\,\text{for}\,\, \Lambda \text{\,\,with\,} \ord(w)\geq k \text{\,\,and\,\,} w(t_0)=w_0\}
	\end{equation*}
	(called the \emph{kth degeneracy space}), which satisfy
	\begin{equation}\label{Wk_properties}
		W_{k+1}(\Lambda,V,t_0) \subseteq W_k(\Lambda,V,t_0)\text{\,\,\,for all\,\,\,} k\geq 1, \qquad W_1(\Lambda,V,t_0) = \Lambda(t_0)\cap V.
	\end{equation}
	(When $\Lambda, V$ and $t_0$ are clear, we will simply write $W_k$.) The first fact in \eqref{Wk_properties} follows immediately from the definition,  while the second follows from the fact that every $V$-root function $w$ has $\ord(w)\geq 1$. One can then define a symmetric bilinear form $\mathfrak{m}_{t_0}^{(k)}(\Lambda,V) : W_k \times W_k \lra \R$ (as in \cite[Definition 5.5.5]{PT09}),
	\begin{equation}\label{kth_crossing_sym_bilinear_form}
		\mathfrak{m}_{t_0}^{(k)}(\Lambda,V)(w_0,v_0)  \coloneqq \dek{}{t}\,\w\big(w(t),v_0 \big )\Big|_{t=t_0}, 
	\end{equation}
	where $w$ is any $V$-root function for $\Lambda$ at $t_0$ with $\ord(w)\geq k$ and $w(t_0)=w_0$. (When $\Lambda$ and $V$ are clear, we will simply write $\mathfrak{m}_{t_0}^{(k)}$.) That  $\mathfrak{m}_{t_0}^{(k)}$ is independent of the choice of root function and therefore well-defined follows from \cite[Corollary 5.5.4]{PT09}; that it is symmetric follows from \cite[Lemma 5.5.3]{PT09}. Using the definition of $W_k$ and $\mathfrak{m}_{t_0}^{(k-1)}$, it is straightforward to show that 
	\begin{equation}\label{Wk1_kernel_kform}
		W_{k} = \ker \mathfrak{m}_{t_0}^{(k-1)}.
	\end{equation}
	In light of \eqref{Wk_properties}, it follows that $W_k$ is simply the subspace of $\Lambda(t_0) \cap V$ upon which the crossing forms up to order $k-1$ are zero.
	
	We define the higher order generalisation of the crossing form defined by Robbin and Salamon to be the quadratic form associated with \eqref{kth_crossing_sym_bilinear_form}.
	\begin{define}\label{define:kth_crossing_form}
		The \emph{$k$th-order crossing form} is the quadratic form
		\begin{equation}\label{kth_crossingform0}
			\mathfrak{m}_{t_0}^{(k)}(\Lambda,V)(w_0) \coloneqq \mathfrak{m}_{t_0}^{(k)}(\Lambda,V)(w_0,w_0)  = \dek{}{t}\,\w\big(w(t),w_0 \big )\Big|_{t=t_0} = \w \left (w^{(k)}(t_0),w_0\right )
		\end{equation}
		for $w_0\in W_k$, where $w$ is any $V$-root function for $\Lambda$ at $t_0$ with $\ord(w)\geq k$ and $w(t_0)=w_0$.
	\end{define}	
	To compute higher order crossing forms in the spatial parameter, we will make use of the following fact.
	
	\begin{lemma}\label{lemma:compute_kth_form_in_practice}
		Let $\mathbf{Z}(t)$ be a frame for $\Lambda(t)$. There exists a $V$-root function $w$ for $\Lambda$ at $t=t_0$ with  $\ord(w)\geq k$ if and only if there exist $k$ vectors $\{h_0,\dots,h_{k-1}\}$ in $\R^n$ such that 
		\begin{equation}\label{k_equations}
			\sum_{j=0}^{i}{i \choose j}\mathbf{Z}^{(i-j)}(t_0) h_{j} \in V \quad \text{for every}\quad i\in\{0,1,\dots,k-1\}. 
		\end{equation} 
		In this case, the $k$th order crossing form \eqref{kth_crossingform0} is given by
		\begin{equation}\label{kth_crossing_form_practice}
			\mathfrak{m}_{t_0}^{(k)}(\Lambda,V)(w_0) =\sum_{j=0}^{k-1} {k \choose j} \,\w( \mathbf{Z}^{(k-j)}(t_0) h_{j},w_0), \qquad w_0\in W_{k}.
		\end{equation}
	\end{lemma}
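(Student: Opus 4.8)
The plan is to pass from the root function $w$ to its coordinate representation in the frame $\mathbf{Z}$, and then read off both the order condition and the crossing form by repeated differentiation at $t_0$. Since $\mathbf{Z}(t)$ has rank $n$ and its columns span $\Lambda(t)$, and since $w(t)\in\Lambda(t)$, I can write $w(t)=\mathbf{Z}(t)h(t)$ for a unique coefficient vector $h(t)\in\R^n$. To see that $h$ is as regular as $w$, I would note that $\mathbf{Z}(t_0)$ admits a left inverse --- for instance the Moore--Penrose pseudoinverse $\mathbf{Z}^+=(\mathbf{Z}^\top\mathbf{Z})^{-1}\mathbf{Z}^\top$, which is as smooth as $\mathbf{Z}$ (analytic, say) near $t_0$ because $\mathbf{Z}$ is full rank --- so that $h(t)=\mathbf{Z}^+(t)w(t)$ inherits the differentiability of $w$. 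In particular, a root function with $\ord(w)\geq k$ is $C^k$ at $t_0$, hence so is $h$, and I set $h_j\coloneqq h^{(j)}(t_0)$.

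First I would establish the equivalence. Applying the Leibniz rule to $w=\mathbf{Z}h$ gives
\begin{equation*}
w^{(i)}(t_0)=\sum_{j=0}^{i}\binom{i}{j}\mathbf{Z}^{(i-j)}(t_0)h_j,\qquad i\geq 0.
\end{equation*}
By the definition of order, $\ord(w)\geq k$ holds precisely when $w^{(i)}(t_0)\in V$ for every $i\in\{0,1,\dots,k-1\}$, the case $i=0$ being the defining property $w(t_0)\in V$ of a root function. Substituting the Leibniz expansion, this is exactly the system \eqref{k_equations}. For the converse, given vectors $h_0,\dots,h_{k-1}$ solving \eqref{k_equations} I would define the Taylor polynomial $h(t)\coloneqq\sum_{j=0}^{k-1}\frac{(t-t_0)^j}{j!}h_j$ and set $w(t)\coloneqq\mathbf{Z}(t)h(t)$; then $w(t)\in\Lambda(t)$, one has $h^{(j)}(t_0)=h_j$ for $j\leq k-1$, and the same Leibniz computation shows $w^{(i)}(t_0)\in V$ for $i\leq k-1$, so that $w$ is a $V$-root function with $\ord(w)\geq k$ and $w(t_0)=\mathbf{Z}(t_0)h_0$.

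It then remains to compute the crossing form. Using that $\mathfrak{m}_{t_0}^{(k)}$ is independent of the choice of root function (\cite[Corollary 5.5.4]{PT09}) together with \eqref{kth_crossingform0}, I evaluate $\mathfrak{m}_{t_0}^{(k)}(\Lambda,V)(w_0)=\w\big(w^{(k)}(t_0),w_0\big)$ on any admissible root function, where $w_0=\mathbf{Z}(t_0)h_0\in W_k$. The Leibniz expansion for $i=k$ reads
\begin{equation*}
w^{(k)}(t_0)=\sum_{j=0}^{k-1}\binom{k}{j}\mathbf{Z}^{(k-j)}(t_0)h_j+\mathbf{Z}(t_0)h^{(k)}(t_0).
\end{equation*}
The key observation is that the top-order term $\mathbf{Z}(t_0)h^{(k)}(t_0)$ lies in $\Lambda(t_0)$; since $w_0\in W_k\subseteq\Lambda(t_0)\cap V\subseteq\Lambda(t_0)$ and $\Lambda(t_0)$ is Lagrangian, we have $\w\big(\mathbf{Z}(t_0)h^{(k)}(t_0),w_0\big)=0$. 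Discarding it by bilinearity of $\w$ yields exactly \eqref{kth_crossing_form_practice}.

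The main obstacle I anticipate is not any single computation but the bookkeeping around regularity and well-definedness. One must check that the coordinate vector $h(t)$ retains enough differentiability to justify the Leibniz expansions at $t_0$, and that the resulting formula is independent of the freedom in choosing $h_1,\dots,h_{k-1}$ --- which is precisely where the cited well-definedness of the higher order crossing form in \cite{PT09} is needed, since $h_0$ is pinned down by $w_0$ but the higher coefficients are not. The cleanest conceptual point is the vanishing of the $j=k$ term, which reconciles the order-$k$ derivative of $w$ (a sum running up to $j=k$) with the stated formula (a sum running only to $j=k-1$), and which relies essentially on $\Lambda(t_0)$ being Lagrangian.
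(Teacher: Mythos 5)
Your proof is correct and follows essentially the same route as the paper: write $w=\mathbf{Z}h$, apply the Leibniz rule to get \eqref{k_equations}, and use the Taylor polynomial $h(t)=\sum_{j=0}^{k-1}\frac{(t-t_0)^j}{j!}h_j$ for the converse. The only (harmless) difference is at the last step, where the paper simply substitutes that specific root function, for which $h^{(k)}(t_0)=0$, into \eqref{kth_crossingform0}, whereas you evaluate on a general root function and kill the extra term $\w(\mathbf{Z}(t_0)h^{(k)}(t_0),w_0)$ using the Lagrangian property of $\Lambda(t_0)$ -- both are valid by the well-definedness result of \cite[Corollary 5.5.4]{PT09}.
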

	\begin{proof}
		Observe that if $w$ is a $V$-root function, then $w(t) = \mathbf{Z}(t) h(t)\in \Lambda(t)$ for some smooth $h:[t_0-\e,t_0+\e]\to\R^n$, and
		\begin{equation}\label{}
			w_i\coloneqq \dei{}{t} w(t) |_{t=t_0} = \sum_{j=0}^{i}{i \choose j}\mathbf{Z}^{(i-j)}(t_0)h^{(j)}(t_0)  \in V \,\,\,\, \text{for every} \,\,\,\, i\in\{0,1,\dots,k-1\}.
		\end{equation}
		Thus the vectors $h_j\coloneqq h^{(j)}(t_0)$ satisfy \eqref{k_equations}. Conversely, suppose there exist vectors $\{h_0,\dots,h_{k-1}\}$ such that \eqref{k_equations} holds. Then 
		\begin{equation}\label{rootfn}
			w(t) = \mathbf{Z}(t) \sum_{j=0}^{k-1}\f{(t-t_0)^j}{j!}h_{j}
		\end{equation}
		is a $V$-root function for $\Lambda$ at $t_0$ with  $\ord(w)\geq k$, as seen by differentiating $w$ $i$ times for $i\in\{0,1,\dots,k-1\}$ at $t_0$. Substituting \eqref{rootfn} into \eqref{kth_crossingform0} yields \eqref{kth_crossing_form_practice}.
	\end{proof}

	A typical application of this lemma is as follows. We first compute the first order form given by \eqref{first-order_RSform} below, where $W_1=\Lambda(t_0)\cap V$. For $k\geq 2$, the space $W_k$ is given via \eqref{Wk1_kernel_kform}. We then find vectors $\{h_0,\dots,h_{k-1}\}$ such that \eqref{k_equations} holds -- in which case $w$ given by \eqref{rootfn} is a root function --  and we compute $\mathfrak{m}_{t_0}^{(k)}$ via \eqref{kth_crossing_form_practice}.

	In the case that $k=1$, for notational convenience we will drop the superscript and write $\mathfrak{m}_{t_0}(\Lambda,V)$. Following \cite{RS93}, a crossing $t_0$ will be called \emph{regular} if $\mathfrak{m}_{t_0}$ is nondegenerate; otherwise, $t_0$ will be called \emph{non-regular}. Denote by $n_+(\mathfrak{m}_{t_0}^{(k)})$ and $n_-(\mathfrak{m}_{t_0}^{(k)})$ the number of positive, respectively negative, squares of $\mathfrak{m}_{t_0}^{(k)}$. Following \cite{PT09,GPP04}, we call the collection of integers
	\[
	n_-(\mathfrak{m}_{t_0}^{(k)}), \quad n_+(\mathfrak{m}_{t_0}^{(k)}), \quad \sgn(\mathfrak{m}_{t_0}^{(k)})=n_+(\mathfrak{m}_{t_0}^{(k)})-n_-(\mathfrak{m}_{t_0}^{(k)}),
	\]
	the \emph{partial signatures} of \eqref{kth_crossingform0}.  The Maslov index of the Lagrangian path $\Lambda$ is then given as follows, as in \cite[Theorem 5.5.9]{PT09} (up to the convention at the endpoints) and \cite[Proposition 3.11]{GPP_full}. As an aside, note that it follows from the arguments in \cite[\S5.5]{PT09} that if $\Lambda$ is analytic, then all crossings are isolated. 
	\begin{define}\label{define:Maslov_GPP}
		Let $V\in\cL(n)$ be fixed, and suppose $\Lambda:[a,b] \to \cL(n)$  is an analytic path of Lagrangian subspaces. The \emph{Maslov index} of $\Lambda$ with respect to $V$ is given by
		\begin{multline}\label{define:Maslov_GPP_eq}
			\Mas(\Lambda,V;[a,b]) = -\sum_{k\geq 1} n_-\left (\mathfrak{m}_{a}^{(k)}\right )+ \sum_{t_0\in(a,b)}\left (  \sum_{k\geq1} \sgn\left (\mathfrak{m}_{t_0}^{(2k-1)}\right )\right ) \\ \qquad \qquad + \sum_{k\geq 1} \left ( n_+\left (\mathfrak{m}_{b}^{(2k-1)}\right )+n_-\left (\mathfrak{m}_{b}^{(2k)}\right )\right ),
		\end{multline}
		where the second sum on the right hand side is taken over all interior crossings, and all sums over $k$ have a finite number of nonzero terms.
	\end{define}	
	Some remarks on the previous definition are in order. First, since all crossings are isolated, the sum over $t_0$ on the right hand side of \eqref{define:Maslov_GPP_eq} has a finite number of nonzero terms. Second, observe that at all interior crossings $t_0\in(a,b)$, only the signatures of the crossing forms of odd order contribute; at the initial point the negative indices of crossing forms of all order contribute; while at the final point, the negative indices of the forms of even order and the positive indices of the forms of odd order contribute. Finally, \cref{define:Maslov_GPP} is best understood in terms of the formula for the spectral flow of a family of symmetric matrices. Namely, one writes the root function as $w(t) = q(t) + R(t) q(t)$, where $q(t)\in V$, $R(t) : V \to W$ for some $W \in \cT_0(V)$, $\Lambda(t) = \graph R(t) = \{ q+ R(t) q: q\in V\}$ and $\Lambda(t_0)\in\cT_0(W)$. One then has a locally defined family of symmetric bilinear forms, $t\mapsto\w(R(t)\cdot,\cdot)|_{V\times V}$ for $t$ near $t_0$, which is degenerate at $t_0$ if and only if $\Lambda(t_0) \cap V \neq\{0\}$. The derivatives of this family at $t_0$, given by \eqref{kth_crossing_sym_bilinear_form}, then determine its spectral flow, i.e. the net change in the number of non-negative eigenvalues, as $t$ increases through $t_0$. For further details, we refer the reader to \cite[Propositions 4.3.9, 4.3.15 and 5.5.7]{PT09} and  \cite[Proposition 2.9, Proposition 3.11]{GPP_full}.

	In \cite{RS93}, Robbin and Salamon exploit the two-sided nature of the train $\cT(V)$ of $V$ to say that a non-degenerate crossing passes from the `positive' side to the `negative' side of the train (or vice versa) according to the signature of the crossing form. (This idea can also be seen in the work of Arnol'd \cite{A85}.) Through the use of higher order crossing forms, we extend this idea to the case where the crossing form is potentially degenerate. In the case of one-dimensional crossings $t_0$, i.e. $\Lambda(t_0)\in\cT_1(V)$, if the first nondegenerate crossing form is of odd order, then $\Lambda$ passes through the train. On the other hand, if the first nondegenerate crossing form is of even order, then $\Lambda$ departs $\cT_1(V)$ in the direction in which it arrived.
	
	It is proven in \cite[Corollary 2.11]{GPP_full} that 
	\begin{equation}\label{dimension_add_up}
		\sum_{k\geq 1} \left ( n_+(\mathfrak{m}_{t_0}^{(k)})  + n_-(\mathfrak{m}_{t_0}^{(k)}) \right ) = \dim \Lambda(t_0) \cap V,
	\end{equation}
	so that by taking sufficiently many higher order crossing forms, a crossing $t_0$ will always contribute $\dim \Lambda(t_0) \cap V$ summands (the signs of which may offset each other) to the Maslov index.
	
	As pointed out in \cite{GPP04}, \cref{define:Maslov_GPP} includes, as a special case, the definition given by Robbin and Salamon in the case that all crossings are regular. To see this, note from the proof of \cref{lemma:compute_kth_form_in_practice} and \eqref{Wk_properties} that, if $\mathbf{Z}(t)$ is a frame for $\Lambda(t)$ and $w_0 = \mathbf{Z}(t_0)h_0 \in \Lambda(t_0) \cap V$, then $w(t) = \mathbf{Z}(t)h_0$ is a root function with $\ord(w)\geq 1$ and 
	\begin{equation}\label{first-order_RSform}
		\mathfrak{m}_{t_0}(\Lambda,V)(w_0) =  \w\left (\mathbf{Z}'(t_0)h_0, \mathbf{Z}(t_0)h_0 \right ) = \left \langle \mathbf{Z}(t_0)^\top J \mathbf{Z}'(t_0) h_0,h_0\right \rangle_{\R^{2n}},
	\end{equation}
	just as in \cite[Theroem 1.1]{RS93}. If $\mathfrak{m}_{t_0}$ is nondegenerate, it follows from \eqref{Wk1_kernel_kform} that $W_2 = \{0\}$ and therefore $W_k=\{0\}$ for $k\geq 3$. Hence $\mathfrak{m}_{t_0}^{(k)}$ is trivial for $k\geq 2$, and from \eqref{dimension_add_up} we have $n_+(\mathfrak{m}_{t_0}) + n_-(\mathfrak{m}_{t_0}) = \dim  \Lambda(t_0)\cap V$. The Maslov index of a path $\Lambda:[a,b]\to \cL(n)$ with only regular crossings is therefore
	\begin{equation}\label{RSdefn}
		\Mas(\Lambda,V;[a,b]) = -n_-\left (\mathfrak{m}_{a}\right ) + \sum_{t_0\in(a,b)}\sgn\left (\mathfrak{m}_{t_0}\right ) + n_+\left (\mathfrak{m}_{b}\right),
	\end{equation}
	as per \cite[\S 2]{RS93} (modulo the convention at the endpoints). 
	
	We will encounter three types of non-regular crossings for paths $\Lambda:[a,b]\to\cL(n)$ in our analysis. The first is a simple interior crossing $t_0\in(a,b)$ for which the first and second order crossing forms are zero, and the third order form is nondegenerate. In this case 
	\begin{equation}\label{simple_3rd_order_nonregular}
		\Mas(\Lambda,V;[t_0-\e,t_0+\e ]) = \sgn \mathfrak{m}_{t_0}^{(3)}.
	\end{equation}
	The second is a two-dimensional interior crossing for which the first order form is degenerate but not identically zero, the second order form is degenerate, and the third order form is nondegenerate. In this case
	\begin{equation}\label{}
		n_+(\mathfrak{m}_{t_0}^{(1)})  + n_-(\mathfrak{m}_{t_0}^{(1)}) + n_+(\mathfrak{m}_{t_0}^{(3)})  + n_-(\mathfrak{m}_{t_0}^{(3)})= \dim \Lambda(t_0) \cap V,
	\end{equation}
	and the contribution of the crossing to the Maslov index of the path is  
	\begin{equation}\label{318}
		\Mas(\Lambda,V;[t_0-\e,t_0+\e ]) =  \sgn \mathfrak{m}_{t_0}^{(1)}  + \sgn \mathfrak{m}_{t_0}^{(3)}.
	\end{equation}
	The third is a two-dimensional crossing occurring at the initial point $t_0=a$, for which the first order form is identically zero and the second order form is nondegenerate. In this case $W_2 = W_1 =\Lambda(t_0)\cap V$, and from \cref{define:Maslov_GPP} we have, for $\e>0$ small enough,
	\begin{equation}\label{second_order_initial}
		\Mas(\Lambda,V;[a,a+\e]) = -n_-(\mathfrak{m}^{(2)}_{a}),
	\end{equation}
	just as in \cite[Proposition 4.15]{CCLM23} and \cite[Proposition 3.10]{DJ11}.

	We summarise the important properties of the Maslov index for the current analysis in the following proposition, as in \cite[Lemma 3.8]{GPP_full} (see also \cite[Theorem 2.3]{RS93}).
	\begin{prop}\label{prop:enjoys}
		The Maslov index enjoys the following properties:
		\begin{enumerate}
			\item (Homotopy invariance.) If two paths $\Lambda_1, \Lambda_2 : [a,b] \lra \cL(n)$ are homotopic with fixed endpoints, then
			\begin{equation}\label{homotopy_property}
				\Mas(\Lambda_1,V; [a,b]) = \Mas(\Lambda_2,\Lambda_0; [a,b]).
			\end{equation}
			\item (Additivity under concatenation.) For $\Lambda(t):[a,c] \lra \cL(n)$ and $a<b<c$, 
			\begin{equation}\label{}
				\Mas(\Lambda,V; [a,c]) = \Mas(\Lambda,V; [a,b]) + \Mas(\Lambda,V; [b,c]).
			\end{equation}
			\item (Symplectic additivity.) Identify the Cartesian product $\cL(n) \times \cL(n)$ as a submanifold of $\cL(2n)$. If $\Lambda=\Lambda_1\oplus\Lambda_2:[a,b] \to \cL(2n)$ where $\Lambda_1,\Lambda_2: [a,b] \to \cL(n)$, and $V=V_1\oplus V_2$ where $V_1, V_2\in\cL(n)$, then
			\begin{equation}\label{case1}
				\Mas(\Lambda,V; [a,b]) = \Mas(\Lambda_1,V_1; [a,b]) + \Mas(\Lambda_2,V_2; [a,b]).
			\end{equation}
			\item (Zero property.) If $\Lambda:[a,b] \lra \cT_k(V)$ for any fixed integer $k$, then
			\begin{equation}\label{}
				\Mas(\Lambda,V; [a,b]) = 0.
			\end{equation}
		\end{enumerate}
	\end{prop}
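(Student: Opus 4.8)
The plan is to read all four properties off the partial-signature formula \eqref{define:Maslov_GPP_eq} that defines the Maslov index in \cref{define:Maslov_GPP}, making use of the equivalence — established in \cite[\S5.5]{PT09} and \cite[\S3]{GPP_full} — between that formula and the homotopy-invariant index of Arnol'd \cite{A67} and Robbin--Salamon \cite{RS93}. Concretely, I would verify additivity under concatenation and symplectic additivity directly from \eqref{define:Maslov_GPP_eq}, inherit homotopy invariance from the cited equivalence, and deduce the zero property from homotopy invariance. The hard part is homotopy invariance itself; the other three properties reduce to bookkeeping with partial signatures once that is in hand.

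For additivity under concatenation, the crossings of $\Lambda$ in $(a,c)$ are exactly those in $(a,b)$, the point $b$, and those in $(b,c)$. The contributions of the endpoints $a,c$ and of every interior crossing other than $b$ agree term-by-term on the two sides, so the only thing to check is that the treatment of $b$ matches. On the left $b$ is an interior crossing contributing $\sum_{k\geq 1}\sgn(\mathfrak{m}_b^{(2k-1)})$, while on the right $b$ is the final endpoint of $[a,b]$ and the initial endpoint of $[b,c]$, together contributing
\[
\sum_{k\geq 1}\left( n_+(\mathfrak{m}_b^{(2k-1)}) + n_-(\mathfrak{m}_b^{(2k)})\right) - \sum_{k\geq 1} n_-(\mathfrak{m}_b^{(k)}).
\]
Splitting the last sum into odd and even orders cancels the even-order negative indices, and $\sgn=n_+-n_-$ collapses the odd-order terms to $\sum_{k\geq 1}\sgn(\mathfrak{m}_b^{(2k-1)})$, matching the left. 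This single algebraic identity is the entire content of the property.

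For symplectic additivity \eqref{case1}, I would observe that under the standard identification the symplectic form splits as $\w=\w_1\oplus\w_2$, so with $\Lambda=\Lambda_1\oplus\Lambda_2$ and $V=V_1\oplus V_2$ a curve $w=(w_1,w_2)$ is a $V$-root function for $\Lambda$ at $t_0$ if and only if each $w_i$ is a $V_i$-root function for $\Lambda_i$, and $\ord(w)=\min\{\ord(w_1),\ord(w_2)\}$. Hence the degeneracy spaces split as $W_k(\Lambda,V,t_0)=W_k(\Lambda_1,V_1,t_0)\oplus W_k(\Lambda_2,V_2,t_0)$, and by \eqref{kth_crossingform0} each higher-order form is the direct sum $\mathfrak{m}_{t_0}^{(k)}(\Lambda,V)=\mathfrak{m}_{t_0}^{(k)}(\Lambda_1,V_1)\oplus\mathfrak{m}_{t_0}^{(k)}(\Lambda_2,V_2)$, with no cross terms between the two factors. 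Since partial signatures are additive over such direct sums, every summand in \eqref{define:Maslov_GPP_eq} splits accordingly and \eqref{case1} follows.

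The crux is homotopy invariance \eqref{homotopy_property}, which I would not attempt to reprove: it is the defining feature of the fundamental-groupoid construction of \cite[\S5.2]{PT09}, and the precise content of the equivalence cited above is that \eqref{define:Maslov_GPP_eq} computes that homotopy-invariant index (see also \cite[Lemma 3.8]{GPP_full}). Granting it, the zero property is straightforward. When $k=0$ the path avoids the train and \eqref{define:Maslov_GPP_eq} has no nonzero terms. For $k\geq1$, a path confined to the single stratum $\cT_k(V)$ carries no transverse crossings: in the spectral-flow picture recorded after \cref{define:Maslov_GPP}, writing $\Lambda(t)=\graph R(t)$ and $g(t)\coloneqq\w(R(t)\cdot,\cdot)|_{V\times V}$, the constancy of $\dim\ker g(t)\equiv k$ forces exactly $k$ eigenvalues of $g$ to vanish identically while the remaining eigenvalues never change sign, so no eigenvalue crosses zero. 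By homotopy invariance the path may then be deformed rel endpoints to one for which \eqref{define:Maslov_GPP_eq} manifestly vanishes; this is the stratum axiom of \cite{A67,RS93}, inherited through \cite{PT09}. (One must take care here, since \eqref{define:Maslov_GPP_eq} and \eqref{dimension_add_up} are stated for isolated crossings, which is exactly why the rigorous justification of (4) is routed through homotopy invariance rather than a direct evaluation on the train.)
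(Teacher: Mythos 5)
Your proposal is correct, but it does substantially more than the paper, which offers no proof of \cref{prop:enjoys} at all: the proposition is simply imported by citation to \cite[Lemma 3.8]{GPP_full} and \cite[Theorem 2.3]{RS93}. Your route instead isolates homotopy invariance as the single ingredient that must be taken from \cite{PT09,GPP_full} and re-derives the other three properties from the partial-signature formula \eqref{define:Maslov_GPP_eq}. The endpoint bookkeeping for concatenation is exactly right: the identity $\sum_{k}\big(n_+(\mathfrak{m}_b^{(2k-1)})+n_-(\mathfrak{m}_b^{(2k)})\big)-\sum_{k}n_-(\mathfrak{m}_b^{(k)})=\sum_{k}\sgn(\mathfrak{m}_b^{(2k-1)})$ is the whole content, and having it written out is genuinely useful here because the paper's endpoint convention differs from \cite{RS93}, so additivity is not something one should take on faith. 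The splitting $W_k(\Lambda,V,t_0)=W_k(\Lambda_1,V_1,t_0)\oplus W_k(\Lambda_2,V_2,t_0)$ and the vanishing of cross terms in $\mathfrak{m}^{(k)}_{t_0}$ (which follows from bilinearity once one notes that a root function through $w_0=(w_{0,1},0)$ can be chosen with second component a root function through $0$) are also correct, as is your observation that $\ord(w)=\min\{\ord(w_1),\ord(w_2)\}$. For the zero property you are right to flag that \eqref{define:Maslov_GPP_eq} cannot be evaluated directly on a path confined to $\cT_k(V)$ with $k\geq 1$, since then no crossing is isolated; routing the argument through homotopy invariance (or, equivalently, through the constancy of the kernel of the local family $g(t)=\w(R(t)\cdot,\cdot)|_{V\times V}$, which for analytic paths forces exactly $k$ eigenvalue branches to vanish identically and the rest to keep a fixed sign) is the correct fix. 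In short: the paper buys brevity by citing the literature wholesale; your version buys a verification that the specific sign and endpoint conventions adopted in \cref{define:Maslov_GPP} are internally consistent with the properties the paper later relies on.
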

	We will call a crossing $t=t_0$ \emph{positive} if 
	\begin{equation}\label{}
		\sum_{k\geq 1}  n_+(\mathfrak{m}_{t_0}^{(2k-1)})  =\dim \Lambda(t_0) \cap V ,
	\end{equation}
	and \emph{negative} if 
	\begin{equation}\label{neg_crossings}
		\sum_{k\geq 1}  n_-(\mathfrak{m}_{t_0}^{(2k-1)}) = \dim \Lambda(t_0) \cap V.
	\end{equation}
	In light of \cref{define:Maslov_GPP}, if $t_0$ is a positive interior crossing, or a positive crossing at the final point $t_0=b$, then it contributes $\dim \Lambda(t_0) \cap V$ to the Maslov index. Similarly, if $t_0$ is a negative interior crossing, or a negative crossing at the initial point $t_0=a$, then its contribution is $-\dim \Lambda(t_0) \cap V$. Note, however, from \eqref{define:Maslov_GPP_eq}, that with this convention, the final crossing $t_0=b$ may still contribute $\dim \Lambda(b) \cap V$ if it is not positive, and the initial point $t_0=a$ may still contribute $-\dim \Lambda(a) \cap V$ if it is not negative. 
	
	\begin{rem}\label{rem:monotonicity_geometrically}
		In \cref{sec:L+L__counts}, we will need to make use of the robustness of one dimensional sign-definite crossings. Suppose then $t_0\in[a,b]$ is a one-dimensional crossing, i.e. $\Lambda(t_0)\in\cT_1(V)$. If $t_0$ is positive or negative, then the lowest nonzero crossing form is of odd order, and the order of intersection of the curve $t\mapsto \Lambda(t)$ with the codimension-one submanifold $\cT_1(V)$ is also odd. It follows that the crossing will persist under small perturbations in the train $\cT(V)$. Hence, in a neighbourhood of $t_0$, $\Lambda$ will cross nearby trains $\cT(W)$ transversely and in the same direction, for all $W$ sufficiently close to $V$. 
	\end{rem}
	
	\subsection{The Maslov index for Lagrangian pairs}

	Suppose now that we have a pair of Lagrangian paths $(\Lambda_1,\Lambda_2): [a,b] \to \cL(n)\times \cL(n) $, or a \emph{Lagrangian pair}. Using the symplectic additivity property of \cref{prop:enjoys}, it is possible to define the Maslov index of such an object (as in \cite{RS93,F04,GPP_full,PT09}), where crossings are values $t_0\in[a,b]$ such that $\Lambda_1(t_0) \cap \Lambda_2(t_0)\neq\{0\}$. Precisely, one realises the Lagrangian pair as the path $\Lambda_1\oplus \Lambda_2$ in the space $\cL(2n)$ of Lagrangian planes of $\R^{4n}$ equipped with the symplectic form
	\begin{equation}\label{omega_squared}
		\Om((u_1,u_2)^\top,(v_1,v_2)^\top) = \w(u_1, v_1) - \w(u_2, v_2), \qquad u_1,u_2,v_1,v_2\in\R^{2n}.
	\end{equation}
	Crossings of the pair then correspond to intersections of the path $\Lambda_1\oplus \Lambda_2:[a,b] \to \cL(2n)$ with the diagonal subspace $\triangle = \{(x,x) :x\in\R^{2n}\} \subset \R^{4n}$. The Maslov index of the pair is then defined by
	\begin{equation}\label{define_maslov_pair}
		\Mas(\Lambda_1,\Lambda_2;[a,b]) \coloneqq \Mas(\Lambda_1\oplus \Lambda_2, \triangle;[a,b]).
	\end{equation}
	The right hand side of \eqref{define_maslov_pair} is computed with \cref{define:Maslov_GPP} using the forms \eqref{kth_crossingform0}; doing so leads to the definitions of the following objects pertaining to the Lagrangian pair as in \cite[Exercise 5.18]{PT09}. A mapping $(w_1,w_2):[t_0-\e,t_0+\e] \to \R^{2n}\times \R^{2n}, \e>0,$ will be called a \emph{root function pair} for $(\Lambda_1,\Lambda_2)$ at $t=t_0$ if $w_1(t) \in \Lambda_1(t), w_2(t) \in \Lambda_2(t)$ and $w_1(t_0) = w_2(t_0)$. The \emph{order} of $(w_1,w_2)$, $\ord(w_1,w_2)$ is the smallest positive integer $k$ such that $w_1^{(k)}(t_0) \neq w_2^{(k)}(t_0)$. We then define
	\begin{align}
		\begin{split}
			W_k(\Lambda_1,\Lambda_2,t_0) \coloneqq \{w_0 : \exists \,\,\text{a root}\,\,&\text{function pair \,} (w_1,w_2) \,\,\text{for} \,\,(\Lambda_1,\Lambda_2)\,\,          \\ 
			&\text{with} \,\, \ord(w_1,w_2)\geq k \,\,\text{and}\,\,w_1(t_0) = w_2(t_0) = w_0 \},
		\end{split}
	\end{align}
	and when the choice of $t_0$ is clear we simply write $W_k(\Lambda_1,\Lambda_2)$. As in \eqref{Wk_properties}, we have $W_{k+1}(\Lambda_1,\Lambda_2,t_0) \subseteq W_k(\Lambda_1,\Lambda_2,t_0)$ for $k\geq 1$ and $W_1 = \Lambda_1(t_0) \cap \Lambda_2(t_0)$. The \emph{kth-order relative crossing form} is then the quadratic form
	\begin{align}\label{rel_cross_form}
		\begin{split}
			\mathfrak{m}^{(k)}_{t_0}(\Lambda_1,\Lambda_2)(w_0) &\coloneqq \dek{}{t}\, \w\big(w_1(t), w_0 \big ) \Big|_{t=t_0} -\, \dek{}{t}\w\big(w_2(t), w_0 \big ) \Big|_{t=t_0}, \\
			&= \w\big(w_1^{(k)}(t_0) - w_2^{(k)}(t_0),w_0 \big ),
		\end{split}
	\end{align}
	for $w_0 \in W_k\left (\Lambda_1,\Lambda_2\right )$, where $(w_1,w_2)$ is any root function pair for $(\Lambda_1,\Lambda_2)$ at $t_0$ with $\ord(w_1,w_2)\geq k$. Analogous to \eqref{Wk1_kernel_kform}, we have $W_k(\Lambda_1,\Lambda_2) = \ker \mathfrak{m}^{(k-1)}_{t_0}(\Lambda_1,\Lambda_2)$. The left hand side of \eqref{define_maslov_pair} is thus computed using the forms \eqref{rel_cross_form} in \cref{define:Maslov_GPP}. In the case that $\Lambda_2=V$ is constant, the computation reduces to the Maslov index of the path $\Lambda_1$ with respect to the reference plane $V$ described in \cref{subsec:maslov_index2}.

	The Maslov index is invariant for Lagrangian pairs that are \emph{stratum homotopic}; we give a proof of this fact below. The corresponding result for single paths can be found in\cite[Theorem 2.4]{RS93}. Suppose the pairs $(\Lambda_1,\Lambda_2):[a,b] \to \cL(n)\times\cL(n)$ and $(\widetilde{\Lambda}_1, \widetilde{\Lambda}_2):[a,b] \to \cL(n)\times\cL(n)$ are stratum homotopic, i.e. there exist continuous mappings $H_1, H_2 : [0,1]\times [a,b] \to \cL(n)$ such that
	\begin{align*}
		&&	H_1(0,\cdot) &= \Lambda_1(\cdot), & H_2(0,\cdot) &=\Lambda_2(\cdot)& \\
		&&	H_1(1,\cdot ) &= \widetilde{\Lambda}_1(\cdot), & H_2(1,\cdot) &= \widetilde{\Lambda}_2(\cdot), &
	\end{align*}
	for which $\dim(H_1(s,a)\cap H_2(s,a))$ and $\dim(H_1(s,b)\cap H_2(s,b))$ are constant in $s\in[0,1]$. We then have the following. 
	\begin{lemma}\label{prop:homotopy_invariance_pairs}
		Suppose $\Lambda_1,\Lambda_2,\widetilde{\Lambda}_1,\widetilde{\Lambda}_2$ are as in the previous paragraph. Then
		\begin{equation}\label{eq:stratum_homotopy}
			\Mas(\Lambda_1, \Lambda_2;[a,b]) = \Mas(\widetilde{\Lambda}_1, \widetilde{\Lambda}_2;[a,b]).
		\end{equation}
	\end{lemma}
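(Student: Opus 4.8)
The plan is to reduce the statement to the homotopy invariance of the Maslov index of a \emph{single} path relative to the fixed diagonal $\triangle$, and then to run a ``corners of the square'' argument using the properties collected in \cref{prop:enjoys}. First I would invoke the definition \eqref{define_maslov_pair}, which rewrites both sides of \eqref{eq:stratum_homotopy} as Maslov indices of single paths in $\cL(2n)$ relative to $\triangle$, namely $\Mas(\Lambda_1\oplus\Lambda_2,\triangle;[a,b])$ and $\Mas(\widetilde{\Lambda}_1\oplus\widetilde{\Lambda}_2,\triangle;[a,b])$. The stratum homotopy $H_1,H_2$ then assembles into a single continuous map $H\colon[0,1]\times[a,b]\to\cL(2n)$, $H(s,t)\coloneqq H_1(s,t)\oplus H_2(s,t)$, interpolating between $\Lambda_1\oplus\Lambda_2$ at $s=0$ and $\widetilde{\Lambda}_1\oplus\widetilde{\Lambda}_2$ at $s=1$. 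The crucial bookkeeping is that, for any $V,W\in\cL(n)$, the map $(v,v)\mapsto v$ is an isomorphism of $(V\oplus W)\cap\triangle$ onto $V\cap W$, so that $\dim\big(H(s,a)\cap\triangle\big)=\dim\big(H_1(s,a)\cap H_2(s,a)\big)$, and likewise at $t=b$; by hypothesis both of these are independent of $s$.

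With this set-up the goal becomes: for a continuous $H\colon[0,1]\times[a,b]\to\cL(2n)$ whose restrictions to the two vertical edges $s\mapsto H(s,a)$ and $s\mapsto H(s,b)$ have constant intersection dimension with $\triangle$, one has $\Mas(H(0,\cdot),\triangle;[a,b])=\Mas(H(1,\cdot),\triangle;[a,b])$. I would prove this by examining the four edges of the parameter square. The two vertical edges $L(s)\coloneqq H(s,a)$ and $R(s)\coloneqq H(s,b)$ each lie entirely inside a single stratum $\cT_k(\triangle)$ (with $k$ the respective constant intersection dimension, the case $k=0$ meaning the path stays off the train), so by the zero property of \cref{prop:enjoys} each contributes zero Maslov index. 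Next, the two ``L-shaped'' concatenations from the corner $(0,a)$ to the corner $(1,b)$ -- bottom-then-right versus left-then-top -- are homotopic rel endpoints inside the simply connected square, hence their images under $H$ are homotopic rel endpoints in $\cL(2n)$. Applying fixed-endpoint homotopy invariance and then additivity under concatenation (both from \cref{prop:enjoys}) gives
\[
\Mas(H(0,\cdot),\triangle;[a,b])+\Mas(R,\triangle;[0,1])=\Mas(L,\triangle;[0,1])+\Mas(H(1,\cdot),\triangle;[a,b]),
\]
and since the two edge terms vanish, the bottom and top indices coincide. Unwinding \eqref{define_maslov_pair} then yields \eqref{eq:stratum_homotopy}.

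The one point that needs genuine care -- and which I expect to be the main obstacle -- is the justification that the vertical edges truly sit in a single stratum and hence contribute nothing: this is exactly where the constancy of $\dim\big(H_1(s,\cdot)\cap H_2(s,\cdot)\big)$ at the endpoints is used, and it is the reason the hypothesis is phrased as a \emph{stratum} homotopy rather than an ordinary one. A secondary subtlety is that the corners $H(0,a)$ and $H(1,b)$ may themselves lie on the train $\cT(\triangle)$; one must therefore appeal to the version of homotopy invariance valid for paths with \emph{arbitrary} endpoints (as encoded in the partial-signature definition \cref{define:Maslov_GPP} and recorded in \cref{prop:enjoys}), rather than to Arnol'd's original definition for paths with endpoints off the train. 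With these two observations in place the argument is purely formal, resting only on the homotopy, additivity and zero properties already established for the Piccione--Tausk Maslov index.
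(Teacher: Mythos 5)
Your proposal is correct and follows essentially the same route as the paper: realise the pair as the single path $H_1\oplus H_2$ in $\cL(2n)$ against the fixed diagonal $\triangle$, show the boundary of the parameter square has vanishing total Maslov index by homotopy invariance (your ``L-shaped concatenations'' identity is just a rearrangement of the paper's equation \eqref{eq:homotopy1}), and kill the two vertical edges with the zero property using the constancy of the intersection dimension. No substantive differences.
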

	\begin{proof}
		Consider the continuous mapping $H = H_1 \oplus H_2 :[0,1]\times [a,b] \to \cL(2n)$. By continuity of $H$ and homotopy invariance (cf. \cref{prop:enjoys}), we have
		\begin{multline}\label{eq:homotopy1}
			\Mas(H(0,\cdot),\triangle;[a,b]) + \Mas(H(\cdot,b),\triangle;[0,1]) \\- \Mas(H(1,\cdot),\triangle;[a,b]) - \Mas(H(\cdot,a),\triangle;[0,1])  = 0.
		\end{multline}
		Using \eqref{define_maslov_pair} we have 
		\[
		\Mas(H(0,\cdot),\triangle;[a,b]) = \Mas(\Lambda_1, \Lambda_2;[a,b]), \quad \Mas(H(1,\cdot),\triangle;[a,b]) = \Mas(\widetilde{\Lambda}_1, \widetilde{\Lambda}_2;[a,b]).
		\]
		By assumption $\dim \left ( H(\cdot,a) \cap \triangle \right)=\dim \left ( H_1(\cdot,a) \cap H_2(\cdot,a) \right )$ and $\dim \left ( H(\cdot,b) \cap \triangle \right)=$\\ $\dim \left ( H_1(\cdot,b) \cap H_2(\cdot,b) \right )$ are constant, so by property (4) of \cref{prop:enjoys} the Maslov indices of the second and fourth terms in \eqref{eq:homotopy1} are zero. Equation \eqref{eq:stratum_homotopy} follows. 
	\end{proof}

	\subsection{The Maslov box}\label{subsec:symplectic_evals}

	We first discuss the regularity and Lagrangian property of the stable and unstable bundles, $\E^{s,u}(x,\la)$, defined in \eqref{bundles_defn} for $x\in\R$ and $\la\in\R$, that will be required for our analysis. We extend $\E^{s,u}$ to $x=\pm\infty$ by setting
	\begin{align}\label{defn_bundles_infty}
		 \E^u(-\infty,\la) \coloneqq \U(\la),\qquad \E^s(+\infty,\la) \coloneqq \Ss(\la), \quad
	\end{align}
	and
	\begin{align}
		 \E^{u}(+\infty,\la) \coloneqq \lim_{x\to+\infty} \E^{u}(x,\la),\qquad \E^{s}(-\infty,\la) \coloneqq \lim_{x\to-\infty} \E^{s}(x,\la). 
	\end{align}
	Thus by definition and \eqref{bundle_limits}, the maps $(x,\la)\mapsto\E^u(x,\la)$ and $(x,\la)\mapsto\E^s(x,\la)$ are continuous on $[-\infty,\infty)\times \R$ and $(-\infty,\infty]\times \R$ respectively. It is known \cite{AGJ90,sandstede02} that the maps $\la\mapsto \E^{s,u}(x,\la)$ are analytic on $\R$ for each fixed $x\in\R$. The map $x\mapsto \E^u(x,0)$ is also analytic on $\R$. To see this, we note that solutions to \eqref{1st_order_sys} inherit the analyticity of the right hand side of \eqref{1st_order_sys}, and that the eigenvectors forming a basis of $\U(0)$ are linearly independent (see \eqref{A_asymp_evals} with $\la=0$ assuming \eqref{assumptions}, \eqref{assumption2}). It follows that the orthogonal projection $\mathbb{P}_{\E^u(x,0)}(x) $ onto $\E^u(x,0)$, given by $\mathbb{P}_{\E^u(x,0)}(x)  =  \mathbf{Z}(x)(\mathbf{Z}(x)^\top\mathbf{Z}(x))^{-1}\mathbf{Z}(x)^\top$ where $\mathbf{Z}(x)$ is a frame for $ \E^u(x,0)$, is analytic in $x$.

	We remark here that the mapping 
	\begin{equation}\label{disc_top_shelf}
		\la\mapsto \E^u(+\infty,\la) = \lim_{x\to\infty} \E^u(x,\la)
	\end{equation}
	is discontinuous at eigenvalues $\la\in\spec(N)$. Indeed, if $\la\notin\spec(N)$, then $\E^u(+\infty,\la)  = \U(\la)$ (again as points on $\Gr_4(\R^8)$), while if $\la\in\spec(N)$ is an eigenvalue then $\E^u(+\infty,\la) \cap \Ss(\la) \neq \{0\}$, i.e. $\E^u(+\infty,\la) \in \cT(\Ss(\la) )$. Now since $\U(\la)\in\cT_0(\Ss(\la))$, and $\cT_0(\Ss(\la))$ is an open subset of $\cL(n)$ with boundary $\cT(\Ss(\la))$, it follows that $\U(\la)$ is bounded away from $\cT(\Ss(\la))$. For more details see the Appendix in \cite{HLS18}.
	
	\begin{rem}\label{rem:compactify}
		The Maslov index is defined for Lagrangian paths over compact intervals. Following \cite{HLS18} we will sometimes compactify $\R$ via the change of variables  
		\begin{equation}\label{}
			x=\ln\left (\f{1+\tau}{1-\tau}\right ), \qquad \tau\in[-1,1].
		\end{equation}
		Notationally we will use a hat to indicate such a change has been made, for example, 
		\begin{equation}\label{}
			\widehat{\E}^{s,u}(\tau,\cdot) \coloneqq \E^{s,u}\left (\ln\left (\f{1+\tau}{1-\tau}\right ),\cdot\right ), \quad \tau\in[-1,1].
		\end{equation}
		In this case, \eqref{defn_bundles_infty} implies that $\widehat{\E}^{u}(-1,\la) = \U(\la)$ and $\widehat{\E}^{s}(1,\la) = \Ss(\la)$.
	\end{rem}
	\begin{lemma}\label{lemma:lagrangian_subspaces}
		The spaces $\E^u(x;\la)$ and $\E^s(x;\la)$ are Lagrangian subspaces of $\R^8$ for all $x\in[-\infty, \infty]$ and $\la\in \R$.
	\end{lemma}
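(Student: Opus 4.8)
The plan is to exploit the fact that the symplectic form \eqref{omega2} is conserved along solutions of \eqref{1st_order_sys}, which is an immediate consequence of the infinitesimally symplectic identity \eqref{infsymp}. First I would record the conservation law: if $\mathbf{w}_1,\mathbf{w}_2$ both solve \eqref{1st_order_sys}, then differentiating, using $\mathbf{w}_i'=A(x;\la)\mathbf{w}_i$, and invoking \eqref{infsymp} gives
\[
\de{}{x}\,\w(\mathbf{w}_1,\mathbf{w}_2) = \langle JA\mathbf{w}_1,\mathbf{w}_2\rangle + \langle J\mathbf{w}_1, A\mathbf{w}_2\rangle = \langle (JA + A^\top J)\mathbf{w}_1,\mathbf{w}_2\rangle = 0,
\]
so that $\w(\mathbf{w}_1(x),\mathbf{w}_2(x))$ is independent of $x$.

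Next, fix $\la\in\R$ and a finite $x\in\R$, and take $\mathbf{w}_1,\mathbf{w}_2\in\E^u(x,\la)$. By \eqref{bundles_defn} these are the evaluations at $x$ of solutions that decay to zero as $x\to-\infty$. Since the pairing $\w(\mathbf{w}_1(\cdot),\mathbf{w}_2(\cdot))$ is constant in the spatial variable by the conservation law, I may evaluate it in the limit $x\to-\infty$, where both solutions vanish, to conclude that $\w$ vanishes identically on $\E^u(x,\la)$. The exponential dichotomy on $\R^-$ established in \cref{sec:setup2} furnishes a four-dimensional space of decaying solutions, which the (pointwise invertible) flow extends to a four-dimensional subspace $\E^u(x,\la)$ for every $x$; since $4=n$, the plane $\E^u(x,\la)$ is maximal isotropic, hence Lagrangian. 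The identical argument with the roles of $\pm\infty$ interchanged, using decay as $x\to+\infty$, handles $\E^s(x,\la)$.

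It remains to treat the endpoints $x=\pm\infty$, for which I would use a closedness argument rather than repeating the flow computation. The set $\cL(4)$ is a closed subset of $\Gr_4(\R^8)$ in the metric $d(V,U)=\|\mathbb{P}_V-\mathbb{P}_U\|$: membership in $\cL(4)$ is the condition $\mathbb{P}_V^\top J\,\mathbb{P}_V=0$ on the orthogonal projection, which is preserved under convergence of projections. By \eqref{bundle_limits} and \eqref{defn_bundles_infty}, for each fixed $\la$ the endpoint value is, by definition, the limit in $\Gr_4(\R^8)$ of the Lagrangian planes $\E^u(x,\la)$ (resp.\ $\E^s(x,\la)$) as $x\to\pm\infty$; in particular $\U(\la)=\E^u(-\infty,\la)$ and $\Ss(\la)=\E^s(+\infty,\la)$. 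Since each approximating plane is Lagrangian by the previous step and $\cL(4)$ is closed, the limits are Lagrangian as well. (Alternatively, $\U(\la)$ and $\Ss(\la)$ may be shown Lagrangian directly by applying the conservation law to the constant-coefficient asymptotic flow $\mathbf{w}_x=A_\infty(\la)\mathbf{w}$.)

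I do not anticipate a serious obstacle here: the argument rests entirely on the conservation law from \eqref{infsymp} and elementary continuity. The only point requiring mild care is the dimension bookkeeping—confirming that the decaying solution spaces are genuinely four-dimensional, so that a plane on which $\w$ vanishes is maximal isotropic (hence Lagrangian) rather than merely isotropic—which is precisely what the exponential dichotomy of \cref{sec:setup2} provides.
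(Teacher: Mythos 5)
Your proposal is correct and follows essentially the same route as the paper: the conservation of $\w$ along solutions via \eqref{infsymp} combined with decay at the relevant infinity (the paper phrases this by writing $\w(\mathbf{w}_1(x),\mathbf{w}_2(x))$ as the integral of its vanishing derivative from $-\infty$ to $x$), the dimension count $\dim\E^{u,s}(x,\la)=4$ from hyperbolicity and continuity, and closedness of $\cL(4)$ in $\Gr_4(\R^8)$ to handle $x=\pm\infty$. No gaps.
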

	\begin{proof}
		First, recall that $\dim \U(\la)=\dim \Ss(\la)=4$ (we showed in \eqref{A_asymp_evals} that $A_\infty(\la)$ is hyperbolic with four eigenvalues of positive real part and four of negative real part.) It follows from the continuity of $\E^u$ on $[-\infty,\infty)\times \R$ that $\dim\E^u(x,\la) =4$ for all $(x,\la)\in [-\infty,\infty)\times \R$. A similar argument shows $\dim\E^s(x,\la)=4$ for $(x,\la)\in (-\infty,\infty]\times \R$. 
		
		Next, for $x\in\R$, let $\mathbf{w}_1(x;\la), \mathbf{w}_2(x;\la)\in \E^u(x;\la)$. We have:
		\begin{align*}
			\w (\mathbf{w}_1(x;\la), \mathbf{w}_2(x;\la)) & = \langle J \mathbf{w}_1(x;\la), \mathbf{w}_2(x;\la) \rangle, \\
			&= \int_{-\infty}^x \de{}{s}  \langle J \mathbf{w}_1(s;\la), \mathbf{w}_2(s;\la) \rangle ds, \\
			&= \int_{-\infty}^x  \langle J A(s;\la) \mathbf{w}_1(s;\la), \mathbf{w}_2(s;\la) \rangle + \langle J \mathbf{w}_1(s;\la), A(s;\la)\mathbf{w}_2(s;\la) \rangle ds, \\
			&= \int_{-\infty}^x  \left \langle \left  (A(s;\la)^\top J+J A(s;\la)\right ) \mathbf{w}_1(s;\la), \mathbf{w}_2(s;\la) \right \rangle ds, \\
			& =0,
		\end{align*}
		where we used \eqref{infsymp}, i.e. that $A(x;\la)$ is infinitesimally symplectic. The proof for $\E^s(x;\la)$ is similar, but the integral is taken over $[x,\infty)$. We have shown that $\E^u$ and $\E^s$ are Lagrangian on $\R\times\R$. That this property extends to $x=\pm\infty$ follows from  the closedness of $\cL(n)$ as a submanifold of $\Gr_n(\R^{2n})$.
	\end{proof}
	
	We are now ready to give the homotopy argument that leads to the lower bound of \cref{thm:main_lower_bound}. We consider the Lagrangian pair
	\begin{equation}\label{eq:Lagpath}
		\Gamma \ni (x,\la) \mapsto ( \E^u(x,\la), \E^s(\ell,\la) )\in \cL(4)\times \cL(4),
	\end{equation}
	where we choose $\ell \gg 1$ large enough so that 
	\begin{equation}\label{eq:ell_large_enough}
		\U(\la)\cap \E^s(x,\la) = \{0\} \,\,\,\,\, \text{for all}\,\,\,\,\, x\geq \ell
	\end{equation}
	(see \cref{rem:why_not_stable_subspace}). Here $\Gamma = \Gamma_1 \cup \Gamma_2 \cup \Gamma_3 \cup \Gamma_4$, where the $\Gamma_i$ are the contours
	\begin{gather}\label{eq:Maslov_box_contours}
		\begin{aligned}
			&\Gamma_1: x\in [-\infty, \ell], \,\,\,\la = 0,  && \Gamma_3: x\in [-\infty, \ell],\,\,\, \la = \la_\infty, \\
			&\Gamma_2: x = \ell, \,\,\,\la \in [0,\la_\infty],  && \Gamma_4: x=-\infty,\,\,\, \la = \la \in [0,\la_\infty],
		\end{aligned}
	\end{gather}
	in the $\la x$-plane (see \cref{fig:box}). The set $\Gamma$ has been dubbed the \emph{Maslov box} \cite{HLS18,Corn19}, and the associated homotopy argument (outlined below) can be seen in as far back as the works of Bott \cite{Bott56}, Edwards \cite{Edwards64}, Arnol'd \cite{A67} and Duistermaat \cite{Duistermaat76}. Notice that along $\Gamma_1$ and $\Gamma_3$, the second entry $\E^s(\ell,\la)$ of the image of the map in \eqref{eq:Lagpath} is fixed. The Maslov index of \eqref{eq:Lagpath} along these pieces thus reduces to the Maslov index for a single path with respect to a fixed reference plane. Along $\Gamma_2$ and $\Gamma_4$, however, we have a genuine Lagrangian pair. 
	
	Crossings of \eqref{eq:Lagpath} are points $(x,\la)\in \Gamma$ such that 
	\[
	\E^u(x,\la)\cap\E^s(\ell,\la) \neq \{0\}.
	\]
	Recalling that $\la$ is an eigenvalue of $N$ if and only if $\E^u(x,\la)\cap \E^s(x,\la)\neq\{0\}$ for all $x\in\R$, it follows that the $\la$-values of the crossings along $\Gamma_2$ (where $x=\ell$) are exactly the eigenvalues of $N$. In particular, since $0\in\spec(N)$, there is a crossing at the top left corner of the Maslov box, $(x,\la)=(\ell,0)$. Moreover, this crossing is two-dimensional by \eqref{dims_equal} and the fact that $\ker(N) = \spn\{(0,\phi)^\top,(\phi_x,0)^\top\}$ (recall \cref{hypo:simplicity_assumption}). Denoting the solutions of \eqref{1st_order_sys} corresponding to $(0,\phi)^\top$ and $(\phi_x,0)^\top$ by
	\begin{equation}\label{eq:phi_varphi}
		\pmb{\phi}(x)\coloneqq\begin{pmatrix}
			0 \\ \phi''(x) + \sigma_2\phi(x) \\ 0 \\ -\phi(x)\\ 0 \\ -\phi'(x) \\ 0 \\ \phi'''(x) \\
		\end{pmatrix}, \qquad
		\pmb{\varphi}(x)\coloneqq \begin{pmatrix}
			\phi'''(x)+\sigma_2\phi'(x) \\ 0 \\ \phi'(x) \\ 0 \\ \phi''(x) \\ 0 \\ \phi''''(x) \\ 0 \\ 
		\end{pmatrix},
	\end{equation}
	(obtained from \eqref{subs} with $u=0, v=\phi,$ and $u=\phi', v=0,$ respectively), it follows that $\E^u(x;0)\cap \E^s(x;0) = \spn\{\pmb{\phi}(x), \pmb{\varphi}(x)\}$ for all $x\in \R$. Hence, evaluating at $x=\ell$ we see that at the corner crossing $(x,\la)=(\ell,0)$ we have 
	\begin{equation}\label{intersection_at_zero_phis}
		\E^u(\ell,0)\cap \E^s(\ell,0) = \spn\{\pmb{\phi}(\ell), \pmb{\varphi}(\ell)\}.
	\end{equation}
	\begin{rem}\label{rem:why_not_stable_subspace}
		That the path \eqref{disc_top_shelf} is discontinuous in $\la$ prohibits taking $\Gamma_2$ to be at $x=+\infty$. Taking $\Gamma_2$ to be at $x=\ell$ for $\ell$ large enough avoids this issue. Chen and Hu \cite{CH07} showed that by taking $\ell$ large enough so that \eqref{eq:ell_large_enough} holds, the Maslov index of \eqref{eq:Lagpath} along $\Gamma_1$ is independent of the choice of $\ell$. For more details, see \cite{CH07,Corn19}. 
	\end{rem}

	{ 
	Crossings along $\Gamma_1$, i.e. points $(x,\la)=(x_0,0)$ such that
	\begin{equation}\label{crossingsG1}
		\E^u(x_0,0)\cap\E^s(\ell,0) \neq \{0\},
	\end{equation}
	will play a distinguished role in our analysis. It will follow from the arguments in \cref{sec:L+L__counts} (see \cref{lemma:L+_half_left_full_left}) that for these crossings, it suffices to use $\Ss(0)$ in lieu of $\E^s(\ell,0)$ (provided $\ell$ is large enough) as the reference plane. A \emph{conjugate point} is then a value $x_0\in\R$ such that
	\begin{equation}\label{conj_pts}
		\E^u(x_0,0)\cap\Ss(0) \neq \{0\}.
	\end{equation}
	Recalling the decoupling of the eigenvalue equations \eqref{eq:N_evp} when $\la=0$, we similarly have that the first order system \eqref{1st_order_sys} decouples into two independent systems for the $u$ and $v$ variables when $\la=0$. As a result, for each $x\in\R$ we have
	\begin{equation}\label{direct_sums}
		\E^u(x,0) = \E_+^u(x,0) \oplus \E_-^u(x,0),
	\end{equation}
	in the sense that for any $\mathbf{w}\in\E^u(x,0) $ we have
	\begin{equation*}
		\mathbf{w} = \begin{pmatrix}
			u_1 \\ 0 \\ u_2 \\ 0 \\ u_3 \\ 0 \\ u_4 \\ 0  
		\end{pmatrix} + \begin{pmatrix}
			0 \\ v_1 \\ 0\\ v_2 \\ 0 \\ v_3 \\ 0  \\ v_4
		\end{pmatrix},
	\end{equation*}
	where $\mathbf{u}=(u_1, u_2, u_3, u_4)^\top \in \E_+^u(x,0)$ and $\mathbf{v}=(v_1, v_2, v_3, v_4)^\top \in \E_-^u(x,0)$. By the same token, $\Ss(0) =  \Ss_+(0)\oplus \Ss_-(0)$. It follows that 
	\begin{multline*}
		\{x_0\in \R: \E^u(x_0,0)\cap\Ss(0) \neq \{0\}\} = \\ \{x_0\in \R: \E^u_+(x_0,0)\cap\Ss_+(0) \neq \{0\}\}\cup \{x_0\in \R: \E^u_-(x_0,0)\cap\Ss_-(0) \neq \{0\}\},
	\end{multline*}
	and this leads to the classification of conjugate points given in \cref{define:conj_pointsL}.
	}

	\begin{figure}
		\centering
		\begin{picture}(150,175)(-20,-40)
			\put(-1,25){$0$}
			\put(-12,-18){$-\infty$}
			\put(10,-25){\vector(0,1){135}}
			\put(-5,35){\vector(1,0){150}}
			\linethickness{0.3mm}
			\put(10,-15){\line(0,1){100}}
			\put(10,85){\line(1,0){110}}
			\put(10,-15){\line(1,0){110}}
			\put(120,-15){\line(0,1){100}}
			\put(10,-15){\vector(0,1){50}}
			\put(10,85){\vector(1,0){55}}
			\put(120,-15){\vector(-1,0){55}}
			\put(120,85){\vector(0,-1){70}}
			\thinlines
			\put(100,55){\text{ $\Gamma_3$}}
			\put(20,55){\text{$\Gamma_1$}}
			\put(55,70){\text{$\Gamma_2$}}
			\put(55,-8){\text{$\Gamma_4$}}
			\put(150,30){$\lambda$}
			\put(8,120){$x$}
			\put(-2,83){$\ell$}
			\put(122,25){$\lambda_\infty$}
			\put(10,85){\circle*{5}}
			\put(10,65){\circle*{5}}
			
			\put(10,42){\circle*{5}}
			\put(10,0){\circle*{5}}
			\put(45,85){\circle*{5}}
			\put(80,85){\circle*{5}}
			\put(30,95){{\tiny \text{real eigenvalues of $N$}}}
			\put(30,-28){{\tiny \,\,\,\text{no crossings}}}  
			\put(-35,60){{{\tiny conjugate}}}
			\put(-30,52){{{\tiny points}}}
			\put(135,60){{{\tiny no}}}
			\put(126,52){{{\tiny crossings}}}
		\end{picture}
		\caption{Maslov box in the $\la x$-plane, with edges oriented in a clockwise fashion. The crossing at the top left corner $(\la,x)=(0,\ell)$ corresponds to the zero eigenvalue of $N$. It is natural to place $\la$ on the horizontal axis upon viewing it as a spectral parameter taking real values, thus lying on the real axis in the complex plane. }
		\label{fig:box}
	\end{figure}
	Since the solid rectangle $\cM=[-\infty,\ell] \times [0,\la_\infty]$ is contractible and the map \eqref{eq:Lagpath} with domain $\cM$ is continuous, the image of the boundary $\p \cM = \Gamma$ of $\cM$ in $\cL(4)\times \cL(4)$ is homotopic to a fixed point. From homotopy invariance (cf. \cref{prop:enjoys}), it follows that
	\begin{equation}\label{}
		\Mas( \E^u(\cdot, \cdot),\E^s(\cdot, \cdot);\Gamma)=0.
	\end{equation}
	By additivity under concatenation, we can decompose the left hand side into the contributions coming from the constituent sides of the Maslov box, i.e.
	\begin{multline}\label{mas_box_argument}
		\Mas(\E^u(\cdot,0),\E^s(\ell,0);[-\infty,\ell] ) + \Mas(\E^u(\ell,\cdot),\E^s(\ell,\cdot);[0,\la_\infty] )  \\  -\Mas(\E^u(\cdot,\la_\infty),\E^s(\ell,\la_\infty);[-\infty,\ell] )  -\Mas(\E^u(-\infty,\cdot),\E^s(\ell,\cdot);[0,\la_\infty] ) =0.
	\end{multline}
	Note we have included minus signs for the last two terms in order to be consistent with the clockwise orientation of the Maslov box (see \cref{fig:box}). We will show in \cref{sec:proof_lower_bound} that \eqref{eq:Lagpath} has no crossings along $\Gamma_3$ and $\Gamma_4$, and hence these last two Maslov indices are zero. Defining 
	\begin{equation}\label{mathfrakc}
		\mathfrak{c} \coloneqq \Mas(\E^u(\cdot,0),\E^s(\ell,0);[\ell-\e,\ell] ) + \Mas(\E^u(\ell,\cdot),\E^s(\ell,\cdot);[0,\e] ), \quad 0 < \e \ll 1,
	\end{equation}
	to be the contribution to the Maslov index of the corner crossing $(x,\la)=(\ell,0)$, it follows once more from additivity under concatenation that
	\begin{equation}\label{eqn_maslov}
		\Mas(\E^u(\cdot,0),\E^s(\ell,0);[-\infty,\ell-\e] ) + \mathfrak{c} +  \Mas(\E^u(\ell,\cdot),\E^s(\ell,\cdot);[\e,\la_\infty] )  =0.
	\end{equation}
	(We remark here that \cref{define:Maslov_GPP} is applicable to all terms on the left hand side, owing to the analyticity of the Lagrangian pairs appearing therein as per the discussion at the outset of the current subsection.) We will compute the first term of \eqref{eqn_maslov} by counting $L_+$ and $L_-$ conjugate points. By bounding $n_+(N)$ from below by the absolute value of the third term in \eqref{eqn_maslov}, computing $\mathfrak{c}$ and rearranging, we will arrive at the statement of \cref{thm:main_lower_bound}. Before doing so, we turn to the computation of the Morse indices of $L_+$ and $L_-$ via the Maslov index. 
	
	\section{Counting eigenvalues for $L_+$ and $L_-$ via conjugate points} \label{sec:L+L__counts}
	
	In this section we prove \cref{thm:Lpm_eval_counts}, which states that the Morse indices of $L_+$ and $L_-$ are equal to the respective number of conjugate points, counted with multiplicity, on $\R$. The proof uses a homotopy argument involving the Maslov box, similar to that described in \cref{subsec:symplectic_evals}. In order to set the argument up, we introduce the spatially dynamic first order systems and their associated stable and unstable bundles. In what follows, we use a subscript $+$ or $-$ to indicate that objects pertain to the eigenvalue problem for $L_+$ or $L_-$.
	
	The eigenvalue equation for $L_+$,
	\begin{equation}\label{evpL+}
		-u'''' - \sigma_2 u'' - \be u + 3  \phi^2 u = \la u, \qquad u\in H^4(\R),
	\end{equation}
	can be reduced to the following first order system via the $u$ substitutions in \eqref{subs},
	\begin{equation}\label{1st_order_sys_L+}
		\begin{pmatrix}
			u_1  \\ u_2 \\ u_3 \\ u_4 
		\end{pmatrix} ' = 
		\begin{pmatrix}
			0 &  0 & \sigma_2   & 1  \\
			0 & 0 &	1  & 0 \\
			1  & -\sigma_2   & 0 & 0 \\ 
			-\sigma_2   & \al(x)-\la  & 0 & 0
		\end{pmatrix}
		\begin{pmatrix}
			u_1  \\ u_2 \\ u_3 \\ u_4 
		\end{pmatrix},
	\end{equation}
	where $\al(x) =  3 \phi(x)^2 -\be +1$. Similar to \eqref{1st_order_sys_N_vector}, we write this system as
	\begin{equation}\label{}
		\mathbf{u}_x=A_+(x,\la)\mathbf{u},
	\end{equation}
	where $\mathbf{u}=(u_1,u_2,u_3,u_4)^\top$ and
	\begin{align*}\label{}
		A_+(x,\la) = \begin{pmatrix}
			0 & B_+ \\ C_+(x,\la) & 0
		\end{pmatrix}, \quad B_+ = \begin{pmatrix}
			\sigma_2 & 1 \\ 1 & 0
		\end{pmatrix}, \quad C_+(x,\la) = \begin{pmatrix}
			1 & -\sigma_2 \\ -\sigma_2 & \al(x) - \la
		\end{pmatrix}.
	\end{align*}
	Likewise, the eigenvalue equation for $L_-$,
	\begin{equation}\label{evpL-}
		- v'''' - \sigma_2 v'' - \be v +  \phi^2 v = \la v, \qquad v\in H^4(\R),
	\end{equation}
	can be reduced to the following first order system via the $v$ substitutions in \eqref{subs},
	\begin{equation}\label{1st_order_sys_L-}
		\begin{pmatrix}
			v_1  \\ v_2 \\ v_3 \\ v_4 
		\end{pmatrix} ' = 
		\begin{pmatrix}
			0 &  0 & -\sigma_2   & 1  \\
			0 & 0 &	1  & 0 \\
			-1  & -\sigma_2   & 0 & 0 \\ 
			-\sigma_2   & \eta(x)+\la  & 0 & 0
		\end{pmatrix}
		\begin{pmatrix}
			v_1  \\ v_2 \\ v_3 \\ v_4 
		\end{pmatrix}.
	\end{equation}
	where $\eta(x)  = - \phi(x)^2 +\be - 1$. We write this as
	\begin{equation}\label{}
		\mathbf{v}_x=A_-(x,\la)\mathbf{v},
	\end{equation}
	where $\mathbf{v}=(v_1,v_2,v_3,v_4)^\top$ and 
	\begin{equation*}\label{}
		A_-(x,\la) = \begin{pmatrix}
			0 & B_- \\ C_-(x,\la) & 0
		\end{pmatrix}, \quad B_- = \begin{pmatrix}
			-\sigma_2 & 1 \\ 1 & 0
		\end{pmatrix}, \quad C_-(x,\la) = \begin{pmatrix}
			-1 & -\sigma_2 \\ -\sigma_2 & \eta(x) + \la
		\end{pmatrix}.
	\end{equation*}
	The coefficient matrices $A_\pm(x,\la)$ are infinitesimally symplectic, satisfying equation \eqref{infsymp}. In order to be consistent with \eqref{1st_order_sys} at $\la=0$, we have used the same substitutions \eqref{subs} to reduce \eqref{evpL+} and \eqref{evpL-} to \eqref{1st_order_sys_L+} and \eqref{1st_order_sys_L-} respectively. Consequently, $\la$ appears with a different sign in \eqref{1st_order_sys_L+} and \eqref{1st_order_sys_L-}. This will be the reason for the difference in sign of the Maslov indices in \cref{lemma:L+_Mas_left_conjpoints}.

	{ The essential spectra of the operators $L_\pm$ is computed similarly to that of $N$ in \cref{sec:setup2}. Computing the $\la\in\R$ such that the asymptotic matrices $A_\pm(\la)$ defined in \eqref{asymptotic_systems} have a purely imaginary eigenvalue, and again considering the maximum value of $-k ^4  +\sigma_2  k ^2 - \beta $, we obtain 
		\begin{align}
			\esspec(L_\pm) &= \{ \la\in \R: \la = - k^4 + \sigma_2 k^2 -\be  \,\,\,\text{for some}\,\,\, k\in\R \}, \nonumber \\
			&= \begin{cases}
				(-\infty, -\beta] &\,{\sigma_2} =-1, \\
				(-\infty, -\beta+\f{1}{4}]  &\,{ \sigma_2} = 1.
			\end{cases}
			\label{essspecL}
		\end{align}
	Under \eqref{assumptions}, we therefore have $\esspec(L_\pm)\subset \R^-$. For all $\la$ lying to the right of the essential spectrum in \eqref{essspecL}, the asymptotic matrices $A_\pm(\la)$ have eigenvalues
	\[
		\spec(A_+(\la)) = \spec(A_-(\la)) = \Big\{ \pm\frac{\sqrt{-\sigma_2 \pm\sqrt{1-4 \beta -4 \lambda }}}{\sqrt{2}} \Big\}.
	\]}
	We denote the associated two-dimensional stable and unstable subspaces by $\Ss_\pm(\la)$ and $\U_\pm(\la)$ respectively. Reasoning as in \cref{subsec:symplectic_evals}, associated with each of the systems \eqref{1st_order_sys_L+} and \eqref{1st_order_sys_L-} are the stable and unstable bundles,
	\begin{align}\label{USbundlesL+-}
		\begin{split}
			\E_+^u(x,\la) &\coloneqq \{\xi\in\R^4: \xi=\mathbf{u}(x;\la), \,\,\mathbf{u} \text{ solves \eqref{1st_order_sys_L+} and $\mathbf{u}(x;\la)\to 0$ as $x\to -\infty$} \},  \\
			\E_+^s(x,\la) &\coloneqq \{\xi\in\R^4: \xi=\mathbf{u}(x;\la), \,\, \mathbf{u} \text{ solves \eqref{1st_order_sys_L+} and $\mathbf{u}(x;\la)\to 0$ as $x\to +\infty$} \}, \\
			\E_-^u(x,\la) &\coloneqq \{\xi\in\R^4: \xi=\mathbf{v}(x;\la),\,\, \mathbf{v} \text{ solves \eqref{1st_order_sys_L-} and $\mathbf{v}(x;\la)\to 0$ as $x\to -\infty$} \}, \\
			\E_-^s(x,\la) &\coloneqq \{\xi\in\R^4: \xi=\mathbf{v}(x;\la),\,\, \mathbf{v} \text{ solves \eqref{1st_order_sys_L-} and $\mathbf{v}(x;\la)\to 0$ as $x\to +\infty$} \}.
		\end{split}
	\end{align}
	When considered as points on the Grassmannian $\Gr_2(\R^4)$ of two dimensional subspaces of $\R^{4}$, these bundles converge to the asymptotic stable and unstable subspaces as follows,
		\begin{align*}
			\lim_{x\to-\infty} \E_+^u(x,\la) &= \U_+(\la), \qquad &\lim_{x\to+\infty} \E_+^s(x,\la) &= \Ss_+(\la),  \\
			\lim_{x\to-\infty} \E_-^u(x,\la) &= \U_-(\la), \qquad &\lim_{x\to+\infty} \E_-^s(x,\la) &= \Ss_-(\la).
		\end{align*}
	That $\E_+^u(x,\la), \E_-^u(x,\la), \E_+^s(x,\la), \E_-^s(x,\la)$ are Lagrangian subspaces of $\R^4$, with the maps $(x,\la)\mapsto \E_\pm^{u}(x,\la)$ and $(x,\la)\mapsto\E_\pm^s(x,\la)$ continuous on $[-\infty,\infty) \times \R$ and $[-\infty,\infty) \times \R$ respectively, and the maps $\la\mapsto \E_\pm^{u,s}(x,\la)$ and $x\mapsto \E_\pm^{u,s}(x,0)$ analytic on $\R$, follows from the same arguments as in \cref{subsec:symplectic_evals}. We omit the details.
	
	Let us focus momentarily on the $L_+$ problem and consider the path 
	\begin{equation}\label{L+path}
		\Gamma \ni (x,\la) \mapsto ( \E_+^u(x,\la), \E_+^s(\ell,\la) )\in \cL(2)\times \cL(2), 
	\end{equation}
	where $\Gamma$ is given in \eqref{eq:Maslov_box_contours} (see \cref{fig:box}). Crossings of \eqref{L+path} along $\Gamma_2$ now represent eigenvalues of $L_+$. Similar to \eqref{mas_box_argument} we have
	\begin{multline}\label{homotopyL+L-}
			\Mas(\E_+^u(\cdot,0),\E_+^s(\ell,0);[-\infty,\ell] ) + \Mas(\E_+^u(\ell,\cdot),\E_+^s(\ell,\cdot);[0,\la_\infty] )  \\  -\Mas(\E_+^u(\cdot,\la_\infty),\E_+^s(\ell,\la_\infty);[-\infty,\ell] )  -\Mas(\E_+^u(-\infty,\cdot),\E_+^s(\ell,\cdot);[0,\la_\infty] ) =0.
	\end{multline}
	To prove \eqref{eq:P=conj_points} in \cref{thm:Lpm_eval_counts}, we will show that \eqref{L+path} has no crossings on $\Gamma_3$ and $\Gamma_4$, and hence the last two terms in the right hand side of \eqref{homotopyL+L-} are zero. We also show that crossings along $\Gamma_2$ are positive. Along $\Gamma_1$, we show that we can interchange the reference plane  $\E_+^s(\ell,0)$ with $\Ss_+(0)$ over a modified interval, i.e.
		\begin{equation}\label{interchange}
		\Mas(\E_+^u(\cdot,0), \E_+^s(\ell,0); [-\infty,\ell]) = \Mas(\E_+^u(\cdot,0), \Ss_+(0); [-\infty,\infty]),
	\end{equation}
	 and that all crossings of the latter path are negative. By showing that the corner crossing $(x,\la)=(\ell,0)$ has two contributions (the arrival along $\Gamma_1$ and departure along $\Gamma_2$) to the Maslov index of \eqref{L+path}, each of which are zero, \eqref{eq:P=conj_points} will then follow. The proof for the $L_-$ problem will be similar: considering the path $\Gamma \ni (x,\la) \mapsto ( \E_-^u(x,\la), \E_-^s(\ell,\la) )\in \cL(2)\times \cL(2)$, crossings along $\Gamma_1$ are now positive, while crossings along $\Gamma_2$ are negative. The two contributions of the corner crossing will be nonzero but cancel each other out, and \eqref{eq:Q=conj_points} similarly follows.
	
	\subsection{Computing the Maslov index along $\Gamma_1$}\label{subsec:4.1}
	
	In order to compute the right hand side of \eqref{interchange}, we will need a real frame for $\Ss_\pm(0)$ with which we can compute crossing forms. To that end, the matrices $A_\pm(0)$ satisfy $\spec(A_+(0))=\spec(A_-(0)) = \{\pm \mu_1, \pm \mu_2\},$ where
	\begin{equation}\label{spatialevalsL+}
		\mu_1 = \frac{\sqrt{ -\sigma_2 - \sqrt{1-4 \beta }}}{\sqrt{2}}, \qquad \mu_2 = \frac{\sqrt{- \sigma_2 + \sqrt{1-4 \beta }}}{\sqrt{2}}.
	\end{equation}
	Under assumption \eqref{assumptions}, we have $\mu_2 = \bar{\mu}_1\in \C\backslash\R$ whenever $\beta> 1/4$ (for both $\sigma_2=\pm1$),  and $\mu_1,\mu_2\in \R$ when $\sigma_2=-1$ and  $0<\beta< 1/4$. {The additional assumption \eqref{assumption2} ensures that $\mu_1\neq \mu_2$ are distinct.} The eigenvectors corresponding to $-\mu_1$ and $-\mu_2$ are given by
	\begin{equation}
		\mathbf{p}_1 = \begin{pmatrix}
			\mu _2^2 \\  -1 \\ \mu_1 \\ \mu_1^3
		\end{pmatrix}, \,\,
		\mathbf{p}_2 = \begin{pmatrix}
			\mu_1^2 \\ -1 \\ \mu_2 \\ \mu_2^3 
		\end{pmatrix}, 
		\qquad \text{and} \qquad 
		\mathbf{m}_1 = \begin{pmatrix}
			\mu _2^2 \\ 1 \\ -\mu_1 \\ \mu_1^3
		\end{pmatrix}, \,\,
		\mathbf{m}_2 = \begin{pmatrix}
			\mu_1^2 \\ 1 \\ -\mu_2 \\ \mu_2^3 
		\end{pmatrix},
	\end{equation}
	so that
	\begin{equation}\label{kernels}
		\ker\left (  A_+(0) + \mu_i \right ) = \spn \{\mathbf{p}_i\}, \quad \ker\left (  A_-(0) + \mu_i \right ) = \spn \{\mathbf{m}_i\}, \qquad i=1,2.
	\end{equation} 
	Notice that the vectors $\mathbf{p}_i, \mathbf{m}_i$ for $i=1,2$ are complex-valued if $\beta > 1/4$, { and are linearly independent if $\mu_1\neq \mu_2$.} 
	We collect these vectors into the columns of two frames, which we denote with $2\times 2$ blocks $P_i, M_i$, $i=1,2,$ via
	\begin{equation}\label{frames1}
		\begin{pmatrix}
			P_1 \\ P_2
		\end{pmatrix} \coloneqq \begin{pmatrix}
			\mu _2^2  & \mu_1^2  \\ -1 & -1 \\ \mu_1 & \mu_2 \\ \mu_1^3 & \mu_2^3  
		\end{pmatrix}, \qquad \begin{pmatrix}
			M_1 \\ M_2
		\end{pmatrix} \coloneqq \begin{pmatrix}
			\mu _2^2  & \mu_1^2  \\ 1 & 1 \\ -\mu_1 & -\mu_2 \\ \mu_1^3 & \mu_2^3  
		\end{pmatrix}.
	\end{equation}
	All of the matrices $P_i, M_i$ are invertible under \eqref{assumptions} and \eqref{assumption2}. Right multiplying each frame in \eqref{frames1} by the inverse of its upper $2\times 2$ block yields the following \emph{real} frame for $\Ss_\pm(0)$,
	\begin{align}\label{S+frame}
		\mathbf{S}_\pm &= \begin{pmatrix}
			I \\ S_\pm
		\end{pmatrix}, \qquad S_\pm= \frac{1}{\sqrt{2 \sqrt{\beta }-\sigma_2 }} \left(
		\begin{array}{cccc}
			\mp 1   & \sigma_2 -\sqrt{\beta }\\
			\sigma_2 -\sqrt{\beta } & \pm(\sqrt{\beta } \sigma_2 +\beta -1)
		\end{array}
		\right),
	\end{align}
	where $S_+ = P_2P_1^{-1}$ and $S_- = M_2M_1^{-1}$.
	
	An important relation exists between $S_\pm$ and the blocks of the asymptotic matrix $A_\pm(0)$ that will be needed in our analysis. Define $C_\pm(x) \coloneqq C_\pm(x,0)$ and
	\begin{equation}\label{Chat}
		 \widehat{C}_+\coloneqq \lim_{x\to \pm\infty} C_+(x) = \begin{pmatrix}
				1 & -\sigma_2 \\ -\sigma_2 & 1-\be 
			\end{pmatrix},\qquad \widehat{C}_- \coloneqq \lim_{x\to\pm\infty} C_-(x) = \begin{pmatrix}
				-1 & -\sigma_2 \\ -\sigma_2 & \be -1 
			\end{pmatrix}.
	\end{equation} 
	Focusing on the $L_+$ problem, because the columns of the frame $(P_1, P_2)$ are eigenvectors of $A_+(0)$, we have
	\begin{equation}\label{}
		\begin{pmatrix}
			0 & B_+ \\ \widehat{C}_+ & 0 
		\end{pmatrix} \begin{pmatrix}
			P_1 \\ P_2
		\end{pmatrix} = \begin{pmatrix}
			P_1 \\ P_2
		\end{pmatrix}D_+ , \qquad D_+= \diag\{-\mu_1, -\mu_2\}.
	\end{equation}
	That is, $B_+P_2 = P_1 D_+$ and $\widehat{C}_+ P_1 = P_2 D_+$. It follows that 
	\begin{equation}\label{C=SPS+}
		\widehat{C}_+ = P_2 D_+P_1^{-1} = \left ( P_2 P_1^{-1} \right )\left ( P_1 D_+ P_2^{-1} \right ) \left ( P_2 P_1^{-1} \right ) = S_+ B_+ S_+.
	\end{equation}
	Similarly, it can be shown that
	\begin{equation*}%\label{C=SPS-}
		\widehat{C}_- = S_- B_- S_-.
	\end{equation*}
	We are ready to state our first intermediate result towards the proof of \cref{thm:Lpm_eval_counts}: monotonicity of the paths $x\mapsto (\E_\pm^u(x,0),\Ss_\pm(0))$. In what follows, $\langle \cdot, \cdot\rangle$ denotes the Euclidean dot product. Recall the definition of $p_c$ and $q_c$ from \eqref{pcqc}.
	\begin{lemma}\label{lemma:L+_Mas_left_conjpoints}
		Each crossing $x_0\in\R$ of the Lagrangian path $x\mapsto (\E_+^u(x,0),\Ss_+(0))$ is negative, thus
		\begin{equation}\label{eq49}
			\Mas(\E_+^u(\cdot,0), \Ss_+(0); [-\infty,\infty)) = -p_c.
		\end{equation}
		Similarly, each crossing $x_0\in\R$ of $x\mapsto \left (\E_-^u(x,0),\Ss_-(0)\right )$ is positive, so that
		\begin{equation}\label{}
			\Mas(\E_-^u(\cdot,0), \Ss_-(0); [-\infty,\infty)) = q_c.
		\end{equation}
	\end{lemma}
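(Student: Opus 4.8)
The plan is to compute the crossing forms of the Lagrangian path $x\mapsto(\E_+^u(x,0),\Ss_+(0))$ at each crossing $x_0$ and show that every contribution to the Maslov index is negative; the $L_-$ argument is identical up to an overall sign. Since the reference plane $\Ss_+(0)$ is constant, by \eqref{define_maslov_pair} this is the Maslov index of the single path $x\mapsto\E_+^u(x,0)$ against the fixed plane $\Ss_+(0)$, so I can apply the crossing-form machinery of \cref{subsec:maslov_index2} directly. First I would record that the crossings are isolated (the bundle $x\mapsto\E_+^u(x,0)$ is analytic) and confined to a compact $x$-interval, since $\E_+^u(x,0)\to\U_+(0)$ as $x\to-\infty$ while $\U_+(0)\cap\Ss_+(0)=\{0\}$ by hyperbolicity of $A_+(0)$; in particular there is no crossing at the left endpoint $x=-\infty$, and only finitely many interior ones.

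The core is the first-order crossing form. Let $\mathbf{Z}(x)$ be a frame for $\E_+^u(x,0)$ whose columns solve $\mathbf{u}_x=A_+(x,0)\mathbf{u}$, so that $\mathbf{Z}'(x_0)=A_+(x_0,0)\mathbf{Z}(x_0)$. For $w_0=\mathbf{Z}(x_0)h_0\in\E_+^u(x_0,0)\cap\Ss_+(0)$, formula \eqref{first-order_RSform} gives
\[
\mathfrak{m}_{x_0}^{(1)}(w_0)=\w\big(A_+(x_0,0)w_0,w_0\big)=\big\langle JA_+(x_0,0)w_0,w_0\big\rangle.
\]
Since $JA_+(x_0,0)=\diag(-C_+(x_0),B_+)$ is block diagonal, writing $w_0=(a,b)^\top$ with $a,b\in\R^2$ gives $\mathfrak{m}_{x_0}^{(1)}(w_0)=-\langle C_+(x_0)a,a\rangle+\langle B_+b,b\rangle$. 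Now I use $w_0\in\Ss_+(0)$: with the real frame $\mathbf{S}_+=(I,S_+)^\top$ of \eqref{S+frame} I write $a=c$, $b=S_+c$, and then the symmetry $S_+^\top=S_+$ together with the identity $\widehat{C}_+=S_+B_+S_+$ of \eqref{C=SPS+} yields $\langle B_+S_+c,S_+c\rangle=\langle\widehat{C}_+c,c\rangle$. Hence
\[
\mathfrak{m}_{x_0}^{(1)}(w_0)=\big\langle(\widehat{C}_+-C_+(x_0))c,c\big\rangle=-3\phi(x_0)^2\,u(x_0)^2\le 0,
\]
because $\widehat{C}_+-C_+(x_0)=\diag(0,-3\phi(x_0)^2)$ and the second component of $c$ is $u_2(x_0)=u(x_0)$. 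The same computation with $\widehat{C}_--C_-(x_0)=\diag(0,\phi(x_0)^2)$ produces $\mathfrak{m}_{x_0}^{(1)}(w_0)=\phi(x_0)^2v(x_0)^2\ge0$ for $L_-$. So the first-order form is negative semi-definite for $L_+$ and positive semi-definite for $L_-$. At a \emph{regular} crossing these forms are definite, so by \eqref{RSdefn} the crossing is negative (resp.\ positive) and contributes $-\dim\big(\E_+^u(x_0,0)\cap\Ss_+(0)\big)$ (resp.\ $+\dim$), exactly the condition in \eqref{neg_crossings}.

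The main obstacle is the \emph{non-regular} crossings, which arise precisely when the intersection $\E_+^u(x_0,0)\cap\Ss_+(0)$ contains a nonzero vector with $\phi(x_0)u(x_0)=0$ — i.e.\ a zero of $\phi$ coincides with a conjugate point, or an intersecting solution has $u(x_0)=0$. There $\mathfrak{m}_{x_0}^{(1)}$ is only semi-definite, the degeneracy space $W_2=\ker\mathfrak{m}_{x_0}^{(1)}$ is nontrivial, and I must pass to higher-order crossing forms. I would compute $\mathfrak{m}_{x_0}^{(2)}$ and $\mathfrak{m}_{x_0}^{(3)}$ on $W_2$ using \cref{lemma:compute_kth_form_in_practice}: solve the hierarchy \eqref{k_equations} for vectors $h_0,h_1,\dots$ adapted to $\mathbf{Z}$, substitute $\mathbf{Z}'=A_+\mathbf{Z}$ and $\mathbf{Z}''=(A_+'+A_+^2)\mathbf{Z}$, and use that $\w$ vanishes identically on the Lagrangian plane $\E_+^u(x_0,0)$ to cancel the tangential terms. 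The target is to show that the even-order form $\mathfrak{m}_{x_0}^{(2)}$ vanishes on $W_2$ while $\mathfrak{m}_{x_0}^{(3)}$ is negative definite there (positive definite for $L_-$); the semi-definiteness already established is what rules out any positive odd-order contribution, so by \eqref{dimension_add_up} and the partial-signature formula \eqref{define:Maslov_GPP_eq} (cf.\ \eqref{simple_3rd_order_nonregular} and \eqref{318}) the whole crossing is negative and contributes $-\dim\big(\E_+^u(x_0,0)\cap\Ss_+(0)\big)$. I expect this degenerate computation — confirming that the obstruction sits at odd order with the correct sign and that the intervening even-order forms do not spoil the count — to be the delicate part; it is the analogue of the semi-definiteness argument of \cite{Howard21,Howard21_Hormander}, and is where the auxiliary hypothesis removed in \cref{sec:appendixA} can be invoked to control the worst degeneracies.

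Finally, summing over the finitely many interior crossings via \cref{define:Maslov_GPP}, every crossing of $x\mapsto(\E_+^u(x,0),\Ss_+(0))$ contributes $-\dim\big(\E_+^u(x_0,0)\cap\Ss_+(0)\big)$, and with no crossing at $x=-\infty$ the total equals $-\sum_{x\in\R}\dim\big(\E_+^u(x,0)\cap\Ss_+(0)\big)=-p_c$ by \eqref{pcqc}. The identical argument with positive crossings yields $+q_c$ for $L_-$, completing the proof.
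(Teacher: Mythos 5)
Your setup and your first-order computation are correct and agree with the paper's: using the frame $(I,S_+)^\top$ for $\Ss_+(0)$ and the identity $\widehat{C}_+=S_+B_+S_+$ from \eqref{C=SPS+}, the first-order form reduces to $-\langle\widetilde{C}_+(x_0)k_0,k_0\rangle=-3\phi(x_0)^2\,u(x_0)^2$, matching \eqref{FOF} (and the sign flips for $L_-$ exactly as you say). The identification of where non-regularity occurs — $\phi(x_0)u(x_0)=0$ — is also right.

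The gap is that you stop at the point where the actual work begins. The entire content of the lemma, beyond the one-line first-order computation, is the verification that at a degenerate crossing the second-order form vanishes on $W_2$ and the third-order form is negative definite there (with the two-dimensional case requiring a separate treatment of how $h_1$ and $h_2$ are constrained), and — when $\phi(x_0)=0$ — that the first nonvanishing form occurs at order $3$, $5$, $7$ or $9$ depending on which derivative of $\phi$ is first nonzero, and is negative in every case. You state this as a ``target'' and an ``expectation'' rather than proving it. Moreover, the shortcut you offer in its place is not valid: the assertion that ``the semi-definiteness already established is what rules out any positive odd-order contribution'' does not follow. Semi-definiteness of $\mathfrak{m}^{(1)}_{x_0}$ \emph{at crossings} constrains nothing about the sign of $\mathfrak{m}^{(3)}_{x_0}$ on the degeneracy space $W_2=\ker\mathfrak{m}^{(1)}_{x_0}$; the definitions \eqref{kth_crossingform0}--\eqref{kth_crossing_form_practice} make the higher forms independent quantities that must be computed. (The genuine semi-definiteness argument of \cite{Howard21,Howard21_Hormander} requires monotonicity of an associated family of forms on a whole neighbourhood, not just at crossings, and the paper explicitly elects not to use it here.) Since the third-order forms could a priori be indefinite or positive on $W_2$, without the explicit computations in the proof of \cref{lemma:L+_Mas_left_conjpoints} and in \cref{sec:appendixA} the claimed sign of every crossing — and hence \eqref{eq49} — is unproven.

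A smaller point: your finiteness claim only controls the left end. Crossings could in principle accumulate as $x\to+\infty$ (where $\E^u_+(x,0)$ does approach the train of $\Ss_+(0)$ by \cref{hypo:simplicity_assumption}); ruling this out uses the one-sided, monotone approach to $\cT_1(\Ss_+(0))$, which again rests on the sign-definiteness you have not yet established.
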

	\begin{rem}\label{rem:open_intervals}
		In the above lemma (and throughout), by having the domain of the Lagrangian paths $x \mapsto (\E_\pm^u(\cdot,0),\Ss_\pm(0))$ as $x\in[-\infty,\infty)$, we mean that $\tau\in[-1,1-\e]$ for the compactified path $\tau\mapsto\widehat{\E}^u_+(\tau,0)$ for some small $\e>0$ (see \cref{rem:compactify}). We emphasise that the final point of the path, $\tau=+1$ ($x=\infty$), which is always a conjugate point because $\E_\pm^u(+\infty,0)\in \cT_1(\Ss_\pm(0))$ on account of \cref{hypo:simplicity_assumption}, is excluded.  
	\end{rem}
	
	\begin{proof}
		We begin with the proof of \eqref{eq49}. Denote a frame for the unstable bundle $\E^u_+(x,0)$ by
		\begin{equation*}\label{}
			\mathbf{U}(x)=\begin{pmatrix}
				X(x) \\ Y(x)
			\end{pmatrix}, \qquad X(x),Y(x)\in\R^{2\times 2}.
		\end{equation*}
		In what follows $x_0\in\R$ is a conjugate point, i.e. $\E^u_+(x_0,0)\cap \Ss_+(0)\neq \{0\}$, and we denote 
		\[
		\mathbf{U}_0\coloneqq \mathbf{U}(x_0), \quad X_0\coloneqq X(x_0), \quad Y_0\coloneqq Y(x_0).
		\]
		We momentarily assume the following.
		\begin{hypo}\label{hypo:nonzero}
			At every crossing $x_0\in\R$, $\phi(x_0)\neq0$.
		\end{hypo} 
		We begin by computing the first order crossing form, i.e. \eqref{kth_crossingform0} with $x$ as the independent variable and $k=1$. To that end, recall from \eqref{Wk_properties} that
		\[
		W_1 = \E^u_+(x,0) \cap \Ss_+(0).
		\]
		Any $w_0\in W_1$ can therefore be written
		\begin{equation}\label{intersection}
			w_0 = \mathbf{U}_0 h_0 = \begin{pmatrix}
				X_0 \\ Y_0
			\end{pmatrix} h_0 = \begin{pmatrix}
				I \\ S_+
			\end{pmatrix}k_0 \in \E^u_+(x,0) \cap \Ss_+(0),
		\end{equation}
		for some $h_0,k_0\in \R^2$. To compute the first order form, as in \eqref{first-order_RSform}, we need to write down $\mathbf{U}'(x_0)$; since the columns of $\mathbf{U}(x)$ satisfy \eqref{1st_order_sys_L+}, we have (recall $C_+(x) \coloneqq C_+(x,0)$)
		\begin{equation}\label{XYdash}
			X'(x) = B_+ Y(x), \qquad Y'(x) = C_+(x) X(x).
		\end{equation}
		Recalling \eqref{Chat}, defining
		\begin{equation}\label{widetildeC}
			\widetilde{C}_+(x) \coloneqq C_+(x) - \widehat{C}_+ = \begin{pmatrix}
				0 & 0 \\ 0 & 3\phi(x)^2
			\end{pmatrix}
		\end{equation}
		 and using \eqref{C=SPS+}, we observe that 
		\begin{align}\label{CminusSBS}
			C_+(x) - S_+B_+S_+ =  \left ( \widehat{C}_+ + \widetilde{C}_+(x)  \right ) - S_+B_+S_+ = \widetilde{C}_+(x).
		\end{align}
		Letting $k_0=(a_0,b_0)^\top$, and using \eqref{intersection}, \eqref{XYdash} and \eqref{CminusSBS}, we compute \eqref{first-order_RSform}:
		\begin{align}
			\mathfrak{m}_{x_0}(\E^u_+(\cdot,0),\Ss_+(0))(w_0) &= \w\left (\mathbf{U}'(x_0)h_0, w_0\right )  = \left \langle  \begin{pmatrix}
				-Y'(x_0) \\ X'(x_0)
			\end{pmatrix} h_0, \begin{pmatrix}
				I \\ S_+ 
			\end{pmatrix}k_0 \right \rangle,  \nonumber\\
			&= - \left \langle\left ( Y'(x_0) - S_+ X'(x_0)\right )   h_0, k_0 \right \rangle, \nonumber\\
			&= 	-\left \langle  \left (  C_+(x_0) -S_+B_+S_+ \right )k_0,k_0  \right \rangle, \nonumber\\
			&= - \widetilde{C}_+(x_0) k_0 = -3 \phi(x_0)^2 b_0^2.   \label{FOF}
		\end{align}
		Under \cref{hypo:nonzero}, we conclude that if  there exist root functions $w$ such that $w_0 = \mathbf{S}_+k_0$ where $k_0=(a_0,b_0)^\top, b_0\neq 0$, then $n_-(\mathfrak{m}_{x_0})=1$. If, however, there exist root functions $w$ such that $b_0=0$, then the form $\mathfrak{m}_{x_0}$ is degenerate, and we need to compute higher order crossing forms. We split the analysis into two cases. In what follows, we suppress $x$-dependence of $C_+$ to simplify notation. 
		
		\emph{Case 1: $\dim \E^u_+(x_0,0) \cap \Ss_+(0)=1$.}
		
		Let $\mathbf{s}_1, \mathbf{s}_2$ denote the first and second columns of the frame $\mathbf{S}_+$ given in \eqref{S+frame}. If for some fixed $a_0\in\R$ and $b_0\neq0$ we have
		\begin{equation}\label{W1_not_spns1}
			W_1 = \E^u_+(x_0,0)\cap \Ss_+(0) = \spn \{a_0\mathbf{s}_1 + b_0 \mathbf{s}_2\},
		\end{equation}
		then from \eqref{FOF} we see that $n_-(\mathfrak{m}_{x_0})=1=\dim \E^u_+(x_0,0) \cap \Ss_+(0)$, and the crossing $x_0$ is negative. On the other hand, if $b_0=0$ so that
		\begin{equation}\label{assumption_W1}
			W_1 = \E^u_+(x_0,0) \cap \Ss_+(0) = \spn\{ \mathbf{s}_1\},
		\end{equation}
		then $\mathfrak{m}_{x_0} =0$, and we need to compute higher order crossing forms. For the rest of Case 1, we assume \eqref{assumption_W1}, so that the vector $k_0$ in \eqref{intersection} is given by $k_0=(a_0,0)^\top$ (where $a_0\neq0$ for a nontrivial intersection.)
		
		Since the crossing form is zero on this (one-dimensional) crossing, we know from \eqref{Wk1_kernel_kform} that 
		\begin{equation}\label{W2}
			W_2 = \ker \mathfrak{m}_{x_0} = W_1 = \spn\{ \mathbf{s}_1\}.
		\end{equation}
		According to \cref{lemma:compute_kth_form_in_practice}, assuming $h_0$ satisfies \eqref{intersection} with $b_0=0$, i.e.
		\begin{equation}\label{h0}
			\mathbf{U}_0h_0 = \begin{pmatrix}
				X_0 \\ Y_0
			\end{pmatrix}h_0 = \begin{pmatrix}
				I \\ S_+
			\end{pmatrix} k_0, \qquad k_0 = \begin{pmatrix}
				a_0 \\ 0
			\end{pmatrix},
		\end{equation}
		we now need to find a vector $h_1\in\R^2$ such that 
		\begin{align}\label{h1_original}
			{\mathbf{U}}'(x_0) h_0 + {\mathbf{U}}_0 h_1 = \begin{pmatrix}
				X'(x_0) \\ Y'(x_0)
			\end{pmatrix} h_0 + \begin{pmatrix}
				X_0 \\ Y_0
			\end{pmatrix} h_1 \in \Ss_+(0).
		\end{align}
		Since $(I, S_+)$ is a frame for $\Ss_+(0)$, this just means
		\begin{equation}\label{h1h0_eqn1}
			Y'(x_0) h_0 + Y_0 h_1= S_+\left ( X'(x_0)h_0 + X_0h_1\right ).
		\end{equation}
		Thus, using \eqref{h0}, \eqref{XYdash}, and that 
		\begin{equation}\label{CSBSk0=0}
			C_+k_0 = S_+B_+S_+k_0,
		\end{equation}
		(see \eqref{widetildeC}, \eqref{CminusSBS}), it follows from \eqref{h1h0_eqn1} that 
		\begin{equation}\label{h1ker}
			\left (Y_0 - S_+X_0 \right ) h_1 = - \left ( Y'(x_0) h_0 - S_+X'(x_0) \right )h_0 = - (C_+ - S_+B_+S_+)k_0=0.
		\end{equation}
		In order to write down $h_1$, we first note from \eqref{h0} that 
		\begin{equation}\label{h0ker}
			\left (Y_0 - S_+X_0\right )h_0=0.
		\end{equation}
		Next, we observe that $ \E^u_+(x_0,0) \cap \Ss_+(0) \cong \ker\left (  Y_0 - S_+X_0 \right )$, where the bijective correspondence is given by $\mathbf{U}_0h_0\leftrightarrow h_0$. To see this, note that any $\mathbf{U}_0 h_0 \in \E^u_+(x_0,0)$ is also in $\Ss_+(0)$ if and only if $Y_0h_0 = S_+X_0h_0$, i.e. $h_0 \in \ker (Y_0 - S_+X_0)$. Under our assumption \eqref{assumption_W1}, it follows that $\dim \ker\left (  Y_0 - S_+X_0 \right ) = 1$. Hence, \eqref{h1ker} and \eqref{h0ker} imply that
		\begin{equation}\label{h1}
			h_1 = \al h_0, \qquad \al \in \R.
		\end{equation}
		We are ready to compute the second order form using \cref{lemma:compute_kth_form_in_practice}. With $W_2$ given by \eqref{W2} and $h_0,h_1$ given by \eqref{h0} and \eqref{h1}, we compute:
		\begin{align}
			\mathfrak{m}^{(2)}_{x_0}(\E^u_+&(\cdot,0),\Ss_+(0))(w_0) = \w({\mathbf{U}}''(x_0) h_0 + 2\mathbf{U}'(x_0) h_1,w_0 ), \nonumber \nonumber  \\
			\begin{split}\nonumber
				&= \left \langle  \begin{pmatrix}
					-Y''(x_0) \\ X''(x_0) 
				\end{pmatrix} h_0, \begin{pmatrix}
					I \\ S_+\\ 
				\end{pmatrix} k_0\right \rangle   + 2 \left \langle   \begin{pmatrix}
					-Y'(x_0) \\ X'(x_0) 
				\end{pmatrix}h_1, \begin{pmatrix}
					I \\ S_+
				\end{pmatrix}k_0  \right \rangle, 
			\end{split} \\
			\begin{split}\label{SOF}
				&= - \left \langle  \left (
				Y''(x_0) - S_+ X''(x_0) \right ) h_0,  k_0\right \rangle   
				- 2 \left \langle   \left ( Y'(x_0) - S_+ X'(x_0) \right ) h_1, k_0  \right \rangle. 
			\end{split} 
		\end{align}
		Differentiating \eqref{XYdash} yields
		\begin{equation}\label{XY''x}
			X''(x) = B_+C_+X(x), \qquad Y''(x) = C_+' X(x) + C_+B_+Y(x).
		\end{equation}
		Using \eqref{h0}, it follows that 
		\begin{align}\label{X''Y''h_0}
			X''(x_0)h_0 = B_+C_+k_0, \qquad Y''(x_0)h_0= C_+'k_0 + C_+B_+ S_+ k_0 = C_+B_+ S_+ k_0,
		\end{align}
		where we used that 
		\begin{align}\label{Cp'}
			\begin{split}
				C'_+ k_0 = \widetilde{C}_+'k_0 = \begin{pmatrix}
					0 & 0 \\ 0 & 6\phi(x)\phi'(x)
				\end{pmatrix} \begin{pmatrix}
					a_0 \\ 0
				\end{pmatrix} = 0.
			\end{split}
		\end{align}
		Using \eqref{X''Y''h_0}, the symmetry of $C_+$ and $S_+B_+S_+$ and \eqref{CSBSk0=0}, the first term of \eqref{SOF} becomes
		\begin{align*}
			- \left \langle  \left (
			Y''(x_0) - S_+ X''(x_0) \right ) h_0,  k_0\right \rangle & = - \left \langle C_+B_+S_+ k_0,k_0 \right \rangle + \left \langle S_+B_+C_+k_0, k_0\right \rangle, \\
			&=  - \left \langle S_+B_+S_+B_+S_+ k_0, k_0 \right \rangle + \left \langle S_+B_+S_+B_+S_+k_0, k_0\right \rangle  = 0.
		\end{align*}
		For the second term of \eqref{SOF}, using \eqref{XYdash}, the symmetry of $C_+$ and $S_+B_+S_+$, \eqref{CSBSk0=0} and \eqref{h1ker}, we obtain
		\begin{align*}
			- 2 \left \langle   \left ( Y'(x_0) - S_+ X'(x_0) \right ) h_1, k_0  \right \rangle & = - 2 \left \langle C_+X_0 h_1, k_0 \right  \rangle + 2 \left \langle S_+B_+Y_0 h_1, k_0\right \rangle, \\
			&= 2 \left \langle S_+B_+\left (Y_0 - S_+X_0 \right )h_1, k_0 \right \rangle =0. 
		\end{align*}
		Hence both terms in \eqref{SOF}, are zero, that is, 
		\[ 
		\mathfrak{m}^{(2)}_{x_0}(\E^u_+(\cdot,0),\Ss_+(0)) =0,
		\] 
		and we need to compute the third order crossing form.
		
		The last calculation shows that $W_3  = \ker \mathfrak{m}^{(2)}_{x_0} = W_2 = W_1$. As per \cref{lemma:compute_kth_form_in_practice}, assuming $h_0$ and $h_1$ are given by \eqref{h0} and \eqref{h1}, we need to find a vector $h_2 \in \R^2$ such that $\mathbf{U}''(x_0) h_0 + 2\mathbf{U}'(x_0) h_1 + \mathbf{U}(x_0) h_2 \in \Ss_+(0)$, that is,
		\begin{equation}\label{h2_original}
			Y''(x_0) h_0 + 2 Y'(x_0) h_1 + Y_0h_2 = S_+ \Big ( X''(x_0)h_0 + 2 X'(x_0)h_1 + X_0 h_2\Big ).
		\end{equation}
		Rearranging, this becomes
		\begin{equation}
			\left (Y_0 - S_+ X_0\right )h_2 =  - \left  (Y''(x_0) - S_+ X''(x_0) \right )h_0 - 2\left ( Y'(x_0)  - S_+X'(x_0) \right )h_1.
		\end{equation}
		Using \eqref{X''Y''h_0}, \eqref{XYdash}, \eqref{h0} and \eqref{CSBSk0=0} in the previous equation yields
		\begin{align}
			\left (Y_0 - S_+X_0 \right ) h_2 &= - (C_+-S_+B_+S_+) B_+S_+ k_0  +2 (S_+B_+ Y_0 - C_+X_0) h_1. \label{h2good}
		\end{align}
		With $w_0\in W_3 = W_2 = W_1$, and $h_0, h_1$ and $h_2$ given by \eqref{h0}, \eqref{h1} and \eqref{h2good}, we are ready to compute the third order form. Using \cref{lemma:compute_kth_form_in_practice}, similar to \eqref{SOF} we arrive at
		\begin{align}
			\mathfrak{m}_{x_0}^{(3)}(\E^u_+(\cdot,0)&,\Ss_+(0))(w_0) = \w({\mathbf{U}}'''(x_0) h_0 + 3\mathbf{U}''(x_0) h_1+3\mathbf{U}'(x_0) h_2,w_0), \nonumber \\
			\begin{split}\label{3rdorderform}
				&= - \left \langle 
				Y'''(x_0) - S_+X'''(x_0) h_0, k_0\right \rangle 
				- 3 \left \langle \left ( Y''(x_0) - S_+ X''(x_0) \right )h_1, k_0  \right \rangle \\
				&\qquad \qquad \qquad \qquad -3  \left \langle \left ( Y'(x_0) - S_+ X'(x_0) \right )h_2, k_0  \right \rangle.
			\end{split} 
		\end{align}
		Differentiating \eqref{XY''x} yields
		\begin{align}
			\begin{split}\label{XY'''x}
				X'''(x) &= B_+C'_+ X(x) + B_+C_+B_+Y(x), \\
				Y'''(x) &= C''_+X(x) + 2 C_+' B_+Y(x) + C_+B_+C_+ X(x).
			\end{split}
		\end{align}
		For the first term of \eqref{3rdorderform}, using \eqref{XY'''x} at $x_0$, \eqref{h0} and \eqref{CSBSk0=0}, as well as that $C_+''k_0 = C_+'k_0=0$, we have
		\begin{align}
			- \left \langle 
			Y'''(x_0) - S_+X'''(x_0) h_0, k_0\right \rangle  & = - \left \langle C_+'' k_0 + 2 C_+'B_+ S_+ k_0 + C_+B_+C_+ k_0, k_0  \right \rangle  \nonumber\\ 
			& \qquad  \qquad + \left \langle S_+B_+C_+B_+S_+k_0,  k_0 \right \rangle, \nonumber \\
			&=  \left \langle S_+B_+ \left (C_+-S_+B_+S_+\right )B_+S_+k_0, k_0 \right \rangle. \label{444}
		\end{align}
		For the second term of \eqref{3rdorderform}, combining \eqref{XY''x} at $x_0$, \eqref{CSBSk0=0} and $C_+'k_0=0$, we find that
		\begin{align}\label{445}
			\begin{split}
				- 3 \left \langle \left ( Y''(x_0) - S_+ X''(x_0) \right )h_1, k_0  \right \rangle & = - 3 \left \langle C_+B_+ Y_0 h_1, k_0  \right \rangle + 3 \left \langle  S_+B_+C_+ X_0 h_1, k_0 \right \rangle,   \\
				&= 3 \big \langle S_+B_+  \left ( C_+X_0 - S_+ B_+Y_0 \right ) h_1,k_0  \big\rangle,  \\
				&= 3 \big \langle S_+B_+   \left ( C_+ - S_+B_+S_+\right )X_0h_1, k_0 \big \rangle  \\
				& \qquad \qquad + 3 \big \langle S_+B_+S_+B_+ \left ( S_+ X_0- Y_0 \right ) h_1,k_0  \big\rangle, \\
				&=0.
			\end{split}
		\end{align}
		In the last line we used that $h_1 \in \ker \left ( Y_0 - S_+ X_0\right)$, as well as that $h_1 = \al h_0$ for some $\al\in\R$ and hence $X_0 h_1 = \al X_0h_0 = \al k_0$, so that
		\begin{equation}
			\left ( C_+ - S_+B_+S_+\right )X_0h_1 = \al \left ( C_+ - S_+B_+S_+\right )k_0 = 0.
		\end{equation}
		For the last term of \eqref{3rdorderform}, using \eqref{XYdash} and \eqref{h2good}, we find that 
		\begin{align}
			\begin{split} \label{lastterm3OF}
				-3  \left \langle \left ( Y'(x_0) - S_+ X'(x_0) \right )h_2, k_0  \right \rangle & = - 3 \left \langle C_+X_0 h_2, k_0  \right \rangle + 3 \left \langle S_+B_+Y_0 h_2, k_0  \right \rangle,   \\
				&= 3 \left \langle S_+B_+ \left ( Y_0 - S_+X_0\right ) h_2, k_0 \right \rangle, \\
				&= -3 \left \langle S_+B_+ \left ( C_+-S_+B_+S_+\right ) B_+S_+k_0, k_0 \right \rangle  \\
				& \qquad  - 6 \langle S_+B_+ \left ( C_+X_0 - S_+B_+Y_0 \right ) h_1, k_0\rangle.
			\end{split}
		\end{align}
		The second term of the last line is minus two times the quantity in \eqref{445}, and is therefore zero. Combining the first term with \eqref{444}, we arrive at
		\begin{align*}
			\mathfrak{m}_{x_0}^{(3)}(\E^u_+(\cdot,0)&,\Ss_+(0))(w_0) = -2 \left \langle S_+B_+ \left ( C_+-S_+B_+S_+\right ) B_+S_+k_0, k_0 \right \rangle =  -\f{6a_0^2 \phi(x_0)^2}{2\sqrt{\beta}-\sigma_2}.
		\end{align*}
		It follows from our assumptions \eqref{assumptions}, \eqref{assumption2} that $2\sqrt{\be}-\sigma_2 > 0$. Hence, under \cref{hypo:nonzero} and with $a_0\neq0$, we have $n_-(\mathfrak{m}_{x_0}^{(3)})=1=\dim \E^u_+(x_0,0) \cap \Ss_+(0)$, and the crossing is negative.
		
		\emph{Case 2: $\dim \E^u_+(x_0,0) \cap \Ss_+(0)=2$.}
		
		In this case, $W_1 = \E^u_+(x,0) = \Ss_+(0)$, and hence $\mathbf{U}_0$ and $\mathbf{S}_+$ are frames for the same Lagrangian plane. Therefore $\mathbf{U}_0 = \mathbf{S}_+M$ for some $2\times 2$ invertible matrix $M$, so that $X_0 = M$ and 
		\begin{equation}\label{YSX}
			Y_0 =S_+X_0.
		\end{equation} 
		Evaluating the first order form on $W_1 = \E^u_+(x,0) = \Ss_+(0)$, we already saw that $n_-(\mathfrak{m}_{x_0})=1$. Since $\ker\mathfrak{m}_{x_0} \neq \{0\}$, we again compute higher order crossing forms; the proof is similar to Case 1 with the following changes. For the second order form, we have
		\begin{equation}
			W_2 = \ker \mathfrak{m}_{x_0} = \spn\{ \mathbf{s}_1\}.
		\end{equation}
		Since $Y_0 =S_+X_0$, equation \eqref{h1ker} indicates that any $h_1\in\R^2$ will satisfy \eqref{h1_original}. With $h_1$ free and $h_0$ given by \eqref{h0} (where $b_0=0, a_0\neq0$), the computation of the second order form is unchanged, owing to \eqref{YSX}. For the third order form, we'll have $W_3 = W_2$, while using \eqref{YSX} in \eqref{h2good} shows that 
		\begin{equation}\label{h1dim2good}
			(S_+B_+ Y_0 - C_+X_0) h_1 = \f{1}{2} (C_+-S_+B_+S_+) B_+S_+ k_0.
		\end{equation}
		We conclude that \emph{any} $h_2\in\R^2$ will satisfy \eqref{h2_original}, \emph{provided} $h_1$ satisfies \eqref{h1dim2good}. With such $h_0, h_1, h_2$, the computation of the third order form is similar; the first term of \eqref{3rdorderform} again gives \eqref{444}, the last term of \eqref{3rdorderform} vanishes due to \eqref{YSX} (see the second line of \eqref{lastterm3OF}), while for the second term of \eqref{3rdorderform} we now have, using \eqref{h1dim2good},
		\begin{align}
			- 3 \left \langle \left ( Y''(x_0) - S_+ X''(x_0) \right )h_1, k_0  \right \rangle  & =  3 \big \langle S_+B_+  \left ( C_+X_0 - S_+ B_+Y_0 \right ) h_1,k_0  \big\rangle,  \nonumber\\
			&= -\f{3}{2} \big \langle S_+B_+  \left ( C_+ - S_+ B_+S_+ \right ) B_+S_+k_0,k_0  \big\rangle.  \label{451}
		\end{align}
		Adding \eqref{444} and \eqref{451} together, we find 
		\begin{align*}
			\mathfrak{m}_{x_0}^{(3)}(\E^u_+(\cdot,0),\Ss_+(0))(w_0) &= -\f{1}{2} \left \langle  S_+ B_+ \left ( C_+ - S_+B_+S_+ \right ) B_+S_+ k_0,k_0 \right \rangle = -\f{3\phi(x_0)^2a_0^2}{2(2\sqrt{\beta} - \sigma_2)}.
		\end{align*}
		Under \eqref{assumptions}, \eqref{assumption2}, \cref{hypo:nonzero} and with $a_0\neq0$, we have $n_-(\mathfrak{m}_{x_0}^{(3)})=1$. Hence, $n_-(\mathfrak{m}_{x_0}^{(1)}) + n_-(\mathfrak{m}_{x_0}^{(3)}) = 2 = \dim \E^u_+(x_0,0) \cap \Ss_+(0)$, and the crossing is negative. 
		
		The proof for the $L_-$ problem is similar, the main difference being the change in sign of the crossing forms. In particular, the same arguments as those above show that the first order crossing form is given by
		\begin{equation*}
			\mathfrak{m}_{x_0}(\E^u_-(\cdot,0),\Ss_-(0))(w_0) = - \left \langle (C_- - S_-B_-S_-) k_0, k_0\right \rangle = \phi(x_0)^2b_0^2,
		\end{equation*}
		while again the second order form $\mathfrak{m}_{x_0}^{(2)}(\E^u_-(\cdot,0),\Ss_-(0))(w_0) = 0$. In the case of a one-dimensional intersection the third order form is now
		\begin{equation*}
			\mathfrak{m}_{x_0}^{(3)}(\E^u_-(\cdot,0),\Ss_-(0))(w_0) = -2 \left \langle S_-B_- \left ( C_--S_-B_-S_-\right ) B_-S_-k_0, k_0 \right \rangle =  \f{2a_0^2 \phi(x_0)^2}{2\sqrt{\beta}-\sigma_2},
		\end{equation*}
		and in the two-dimensional case, 
		\begin{equation*}
			\mathfrak{m}_{x_0}^{(3)}(\E^u_-(\cdot,0),\Ss_-(0))(w_0) = -\f{1}{2} \left \langle  S_+ B_+ \left ( C_+ - S_+B_+S_+ \right ) B_+S_+ k_0,k_0 \right \rangle = \f{\phi(x_0)^2a_0^2}{2(2\sqrt{\beta} - \sigma_2)}.
		\end{equation*}
		We omit the details. Thus, in all cases we have $n_+(\mathfrak{m}_{x_0}) + n_+(\mathfrak{m}_{x_0}^{(3)}) = \dim \E^u_-(x_0,0) \cap \Ss_-(0)$, and the crossings are positive. 
		
		If \cref{hypo:nonzero} fails, i.e. a crossing $x_0$ is such that $\phi(x_0)=0$, then the first order crossing form is identically zero, and in Cases 1 and 2 described above, the third order form is also identically zero. Thus, we need to compute higher order crossing forms. The complete proof of monotonicity in this case is cumbersome and is left to the appendix; see \cref{sec:appendixA}.
	\end{proof}
	
	\begin{rem}\label{rem:general_NLS_powerlaw}
		\Cref{lemma:L+_Mas_left_conjpoints} will also hold in the case of any even-integer power-law fourth-order NLS equation, i.e. \eqref{general_NLS_powerlaw} for any $p\in\N$. In this case, $L_\pm$ are given by 
		\begin{equation}\label{Lpm_powerlaw}
			L_- = - \p_x^4 - \sigma_2 \p_x^2 - \be + \phi^{2p}, \qquad 
			L_+ = - \p_x^4 - \sigma_2 \p_x^2 - \be + (2p+1) \phi^{2p},
		\end{equation}
		and the crossing forms $\mathfrak{m}_{x_0}^{(k)}$ for $k=1,2,3$ will be the same as those in the proof of \cref{thm:Lpm_eval_counts}, but scaled by a positive constant, and with $\phi(x_0)^2$ replaced by $\phi(x_0)^{2p}$. The signs are therefore preserved.
	\end{rem}

	Our next task is to show \eqref{interchange}. 
	
	\begin{lemma}\label{lemma:stratum_homotopic_left}
		For $\ell>0$ large enough, we have
		\begin{equation}\label{eq:Mas_closed_pairs}
			\Mas(\E_+^u(\cdot,0), \E_+^s(\ell,0); [-\infty,\ell]) = \Mas(\E_+^u(\cdot,0), \Ss_+(0); [-\infty,\infty]).
		\end{equation}
		A similar statement holds for the $L_-$ problem.
	\end{lemma}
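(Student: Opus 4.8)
The plan is to establish \eqref{eq:Mas_closed_pairs} by a single application of the stratum-homotopy invariance for pairs, \cref{prop:homotopy_invariance_pairs}, that slides the reference plane from $\E_+^s(\ell,0)$ out to its limit $\Ss_+(0)$ while simultaneously extending the spatial interval from $[-\infty,\ell]$ to $[-\infty,\infty]$. The essential idea that makes this succeed is that the base point of the moving reference plane and the right endpoint of the $x$-interval must be deformed \emph{in tandem}, so that the kernel-induced crossing forced at the right endpoint keeps a constant intersection dimension.

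First I would set up the deformation. Fix an increasing function $s\mapsto\ell_s$ on $[0,1]$ with $\ell_0=\ell$ and $\ell_1=+\infty$ (in the compactified variable of \cref{rem:compactify}), and for each $s$ choose an orientation-preserving reparametrisation $\rho_s:[-\infty,\infty]\to[-\infty,\ell_s]$ depending continuously on $s$, with $\rho_1=\mathrm{id}$. Then define
\begin{equation*}
	H_1(s,t)\coloneqq \E_+^u(\rho_s(t),0), \qquad H_2(s,t)\coloneqq \E_+^s(\ell_s,0), \qquad (s,t)\in[0,1]\times[-\infty,\infty].
\end{equation*}
Both maps are continuous: the limit $\E_+^u(+\infty,0)=\lim_{x\to\infty}\E_+^u(x,0)$ exists so that $x\mapsto\E_+^u(x,0)$ is continuous on $[-\infty,\infty]$ at the fixed spectral value $\la=0$, while $r\mapsto\E_+^s(r,0)$ extends continuously to $r=+\infty$ with value $\Ss_+(0)$ by convergence of the stable bundle. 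At $s=0$ the pair $(H_1(0,\cdot),H_2(0,\cdot))$ is a reparametrisation of $(\E_+^u(\cdot,0),\E_+^s(\ell,0))$ on $[-\infty,\ell]$, and at $s=1$ it is $(\E_+^u(\cdot,0),\Ss_+(0))$ on $[-\infty,\infty]$; by reparametrisation invariance of the Maslov index these have indices equal to the two sides of \eqref{eq:Mas_closed_pairs}.

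Next I would verify the hypothesis of \cref{prop:homotopy_invariance_pairs}, namely that the intersection dimensions at $t=\pm\infty$ are constant in $s$. At $t=-\infty$ one has $H_1=\U_+(0)$ and $H_2=\E_+^s(\ell_s,0)$; since $\E_+^s(r,0)\to\Ss_+(0)$ and $\U_+(0)\oplus\Ss_+(0)=\R^4$ by hyperbolicity of $A_+(0)$, enlarging $\ell$ if necessary gives $\U_+(0)\cap\E_+^s(r,0)=\{0\}$ for all $r\geq\ell$ (cf. \eqref{eq:ell_large_enough}), so the dimension is identically $0$. At $t=+\infty$ one has $H_1=\E_+^u(\ell_s,0)$ and $H_2=\E_+^s(\ell_s,0)$ evaluated at the \emph{same} base point $\ell_s$; by the $L_+$-analogue of \eqref{dims_equal}, $\dim\big(\E_+^u(\ell_s,0)\cap\E_+^s(\ell_s,0)\big)=\dim\ker L_+=1$ for $s<1$ (\cref{hypo:simplicity_assumption}), and the same value holds at $s=1$ because $\E_+^u(+\infty,0)\in\cT_1(\Ss_+(0))$ (\cref{rem:open_intervals}). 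Hence this dimension is identically $1$, and \cref{prop:homotopy_invariance_pairs} yields \eqref{eq:Mas_closed_pairs}. The $L_-$ statement follows verbatim, with $\Ss_-(0),\U_-(0)$ and $\ker L_-$ in place of their $L_+$ counterparts.

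The delicate point — and the reason for coupling $\ell_s$ to the interval endpoint — is the crossing forced by the kernel direction $\phi_x\in\ker L_+$, whose lift lies in $\E_+^u(x,0)\cap\E_+^s(x,0)$ for every $x$, so that the right endpoint of the path is always a crossing. A naive homotopy keeping the interval fixed at $[-\infty,\ell]$ while merely rotating the reference plane to $\Ss_+(0)$ would force $\dim\big(\E_+^u(\ell,0)\cap H_2\big)$ to drop from $1$ to $0$, violating the stratum hypothesis; pinning the reference base to the interval endpoint instead keeps this intersection exactly one-dimensional throughout. I expect the only genuinely technical step to be establishing the joint continuity of $H_1,H_2$ up to the compactified endpoints, in particular the continuity of $x\mapsto\E_+^u(x,0)$ at $x=+\infty$ for the fixed value $\la=0$.
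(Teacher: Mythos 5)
Your proposal is correct and follows essentially the same route as the paper: after compactifying, the paper also reparametrises so both paths live on a common interval and builds a stratum homotopy in which the base point of the stable reference plane is slaved to the moving right endpoint of the spatial interval, so that the endpoint intersection stays one-dimensional (the kernel direction $\phi_x$) while the left endpoint stays transverse by \eqref{eq:ell_large_enough}; it then invokes \cref{prop:homotopy_invariance_pairs}. Your identification of the "delicate point" — that decoupling the reference base from the interval endpoint would break the stratum condition — is exactly the mechanism the paper's construction is designed around.
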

	\begin{proof}
		We show that the Lagrangian pairs in the left and right hand sides of \eqref{eq:Mas_closed_pairs} are stratum homotopic. To do so, it will be convenient to compactify $\R$ via the change of variables in \cref{rem:compactify}. Thus, defining 
		\begin{equation}\label{}
			\widehat{\E}^{s,u}_\pm(\tau, 0) \coloneqq \E^{s,u}_\pm\left (\ln\left (\f{1+\tau}{1-\tau}\right ), 0\right ),
		\end{equation}
		\eqref{eq:Mas_closed_pairs} is equivalent to 
		\begin{equation}\label{eq:equiv_Mas_pairs}
			\Mas(\widehat{\E}_+^u(\cdot,0),  \widehat{\E}_+^s(\tau_\ell,0); [-1,\tau_\ell]) = \Mas(\widehat{\E}_+^u(\cdot,0), \widehat{\E}_+^s(1,0); [-1,1]),
		\end{equation}
		where $\ell= \ln((1+\tau_\ell)/(1-\tau_\ell))$, i.e. $\tau_\ell= (e^\ell-1)/(e^\ell+1)$, and we used that $\widehat{\E}^s_+(1,0) = \E^s_+(+\infty,0)\coloneqq \Ss_+(0)$ and Rescaling further, we can map $[-1,1]$ to $[-1,\tau_\ell]$ via
		\[
		g(\tau) = \left (\f{1+\tau_\ell}{2}\right ) \tau + \left (\f{\tau_\ell-1}{2} \right ),
		\]
		where $g(-1)=-1$ and $g(1) = \tau_\ell$. This allows us to write both Lagrangian paths in \eqref{eq:equiv_Mas_pairs} over $[-1,1]$, i.e.
		\begin{equation}\label{eq:equiv_final_pairs}
			\Mas(\widehat{\E}_+^u(g(\cdot),0),  \widehat{\E}_+^s(\tau_\ell,0); [-1,1]) = \Mas(\widehat{\E}_+^u(\cdot,0), \widehat{\E}_+^s(1,0); [-1,1]). 
		\end{equation}
		To prove \eqref{eq:equiv_final_pairs}, we use  \cref{prop:homotopy_invariance_pairs} by explicitly constructing a stratum homotopy between the Lagrangian pairs appearing in \eqref{eq:equiv_final_pairs}. We set
		\begin{align}
			H_1(s,\tau) \coloneqq \widehat{\E}_+^u(\tau+(g(\tau)-\tau)s, 0), \qquad 
			H_2(s,\tau) \coloneqq \widehat{\E}_+^s(1+(\tau_\ell-1)s,0),
		\end{align}
		noting that $H_2$ is independent of $\tau$, and that both mappings $(s,\tau) \mapsto H_{1,2}(s,\tau)$ are continuous on $[0,1]\times [-1,1]$. In addition,
		\begin{align*}
			H_1(s,-1) = \widehat{\E}^u_+(-1,0) = \U_+(0), \qquad 
			H_2(s,-1) = \widehat{\E}^s_+(1+(\tau_\ell-1)s,0),
		\end{align*}  
		where we used that $g(-1)=-1$. Since $\U_+(0)\cap \E^s_+(x,0) = \{0\}$ for all $x\geq \ell$ (see \eqref{eq:ell_large_enough}) and $\U_+(0)\cap \Ss_+(0)=\{0\}$, we have $\U_+(0) \cap \widehat{\E}^s_+(\tau,0) = \{0\}$ for all $\tau \in [\tau_\ell,1]$. Hence 
		\[
		H_1(s,-1) \cap H_2(s,-1) = \{0\}
		\]
		for all $s\in[0,1]$. Furthermore, 
		\begin{align*}\label{}
			H_1(s,1) = \widehat{\E}^u_+(1+(\tau_\ell - 1)s,0),  \qquad H_2(s,1) = \widehat{\E}^s_+(1+(\tau_\ell - 1)s,0),
		\end{align*}
		and therefore
		\[
		\dim H_1(s,1) \cap H_2(s,1)=1
		\]
		for all $s\in[0,1]$ by \cref{hypo:simplicity_assumption}. Equation \eqref{eq:equiv_final_pairs} (and thus \eqref{eq:Mas_closed_pairs}) now follows from \cref{prop:homotopy_invariance_pairs}.
	\end{proof}
	
	Finally, we show that the crossings occurring at the final points of each of the paths in \eqref{interchange} (guaranteed by \cref{hypo:nonzero}) have the same contribution to their respective Maslov indices. Hence, we can exclude the final point of each path in \eqref{interchange}. This will complete the computation of the Maslov index along $\Gamma_1$, i.e. the first term of \eqref{homotopyL+L-}. We note that some care is needed when dealing with the final crossing of the pair $x\mapsto\left (\E_+^u(x,0),  \Ss_+(0)\right )$, which is obtained in the limit as $x\to+\infty$. \Cref{lemma:L+_Mas_left_conjpoints} therefore does not apply, since the root functions used in the crossing form calculations either blow up to infinity or decay to zero asymptotically.

	\begin{lemma}\label{lemma:L+_half_left_full_left}
		For $\e>0$ small enough and $\ell>0$ large enough, we have
		\begin{equation}\label{l411}
			\Mas(\E_+^u(\cdot,0), \E_+^s(\ell,0); [-\infty,\ell-\e]) = \Mas(\E_+^u(\cdot,0), \Ss_+(0); [-\infty,\infty)).
		\end{equation} 
		A similar statement holds for the $L_-$ problem.
	\end{lemma}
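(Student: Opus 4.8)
The plan is to peel off the two terminal crossings by concatenation and then show that they contribute identically to their respective Maslov indices. By additivity under concatenation (\cref{prop:enjoys}), for $\e>0$ small we may write
\[
\Mas(\E_+^u(\cdot,0),\E_+^s(\ell,0);[-\infty,\ell]) = \Mas(\E_+^u(\cdot,0),\E_+^s(\ell,0);[-\infty,\ell-\e]) + B,
\]
\[
\Mas(\E_+^u(\cdot,0),\Ss_+(0);[-\infty,\infty]) = \Mas(\E_+^u(\cdot,0),\Ss_+(0);[-\infty,\infty)) + D,
\]
where $B$ and $D$ denote the contributions of the crossings at $x=\ell$ and at $x=+\infty$ respectively. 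The two left-hand sides agree by \cref{lemma:stratum_homotopic_left}, so it suffices to prove $B=D$. Both crossings are one dimensional: by \cref{hypo:simplicity_assumption} the globally bounded solution $\mathbf{q}(x)$ of \eqref{1st_order_sys_L+} at $\la=0$ arising from $\phi_x\in\ker(L_+)$ (decaying as $x\to\pm\infty$) spans $\E_+^u(x,0)\cap\E_+^s(x,0)$ for every $x$, and its renormalized limit spans $\E_+^u(+\infty,0)\cap\Ss_+(0)$.

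First I would compute $B$. Since $\mathbf{q}(x)\in\E_+^u(x,0)$ for all $x$ and $\mathbf{q}(\ell)\in\E_+^s(\ell,0)$, the curve $w(x)=\mathbf{q}(x)$ is a root function for the pair at $x=\ell$, and \cref{lemma:compute_kth_form_in_practice} gives the first order form
\[
\mathfrak{m}_\ell(\E_+^u(\cdot,0),\E_+^s(\ell,0))(\mathbf{q}(\ell)) = \w(\mathbf{q}'(\ell),\mathbf{q}(\ell)) = \langle J A_+(\ell,0)\mathbf{q}(\ell),\mathbf{q}(\ell)\rangle.
\]
Feeding in the asymptotics of $\mathbf{q}$ as a decaying solution of the constant-coefficient system $\mathbf{u}_x=A_+(0)\mathbf{u}$ (whose spatial eigenvalues are recorded in \eqref{spatialevalsL+}), the oscillatory and exponential factors combine so that this quantity has a fixed sign $s_0\neq 0$ for all $\ell$ large. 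Hence the crossing at $x=\ell$ is regular, and being terminal it contributes $B=n_+(\mathfrak{m}_\ell)\in\{0,1\}$, determined by $s_0$.

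The hard part is the terminal crossing at $x=+\infty$, where this computation cannot be repeated: the solution $\mathbf{q}$ decays to zero while the complementary solution in $\E_+^u(x,0)$ blows up, so the naive root function degenerates and, after compactification (\cref{rem:compactify}), the path $\tau\mapsto\widehat\E_+^u(\tau,0)$ need not even be differentiable at $\tau=1$. To circumvent this I would invoke the robustness of one dimensional sign-definite crossings (\cref{rem:monotonicity_geometrically}). For each large $x_0$ the diagonal crossing of $\E_+^u(\cdot,0)$ with the fixed plane $\E_+^s(x_0,0)$ is one dimensional with the same definite sign $s_0$; by robustness $\E_+^u(\cdot,0)$ then meets every nearby train $\cT_1(W)$ transversally and in the same direction, and since $\E_+^s(x_0,0)\to\Ss_+(0)$ this pins down the side from which $\E_+^u(\cdot,0)$ approaches $\cT_1(\Ss_+(0))$ at $x=+\infty$. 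Consequently the terminal crossing against $\Ss_+(0)$ is governed by the same $s_0$ and contributes $D=n_+=B$.

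Equivalently, and this is the cleanest way to make the limit rigorous, one truncates each path just short of its terminal crossing and builds a stratum homotopy (\cref{prop:homotopy_invariance_pairs}) between $(\E_+^u(\cdot,0),\Ss_+(0))$ on $[-\infty,X]$ and $(\E_+^u(\cdot,0),\E_+^s(\ell,0))$ on $[-\infty,\ell-\e]$, interpolating the reference through $\E_+^s(L,0)$ with $L$ running from $\infty$ to $\ell$ while the right endpoint is slid, keeping both endpoint intersections trivial (using \eqref{eq:ell_large_enough} at $x=-\infty$ and routing the moving endpoint through the crossing-free region beyond the last conjugate point). This yields $\Mas(\E_+^u(\cdot,0),\E_+^s(\ell,0);[-\infty,\ell-\e])=\Mas(\E_+^u(\cdot,0),\Ss_+(0);[-\infty,\infty))$ directly, hence $B=D$. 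The $L_-$ statement follows identically, the only change being that every crossing form, and so $s_0$, switches sign, which does not affect the equality of the two terminal contributions.
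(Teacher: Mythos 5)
Your overall strategy --- peel off the two terminal crossings by concatenation, reduce the lemma to showing their contributions $B$ and $D$ agree, and use closeness of the curves and trains together with the robustness of one-dimensional sign-definite crossings --- is the same route the paper takes. The genuine gap is your computation of $B$. You assert that $\mathfrak{m}_\ell(\mathbf{q}(\ell))=\w(\mathbf{q}'(\ell),\mathbf{q}(\ell))=\langle JA_+(\ell,0)\mathbf{q}(\ell),\mathbf{q}(\ell)\rangle$ is nonzero with a fixed sign $s_0$ for all large $\ell$ because ``the oscillatory and exponential factors combine.'' This is unjustified, and the naive leading-order computation in fact gives zero: writing $\mathbf{q}(\ell)=(q_{\mathrm{top}},\,S_+q_{\mathrm{top}}+r)^\top$ with $r\to0$ one exponential order faster than $q_{\mathrm{top}}$ (since $\mathbf{q}$ decays along $\E_+^s(x,0)\to\Ss_+(0)$), and using $JA_+=\diag(-C_+,B_+)$, the would-be leading term $-\langle\widehat C_+q_{\mathrm{top}},q_{\mathrm{top}}\rangle+\langle B_+S_+q_{\mathrm{top}},S_+q_{\mathrm{top}}\rangle$ vanishes identically because $\widehat C_+=S_+B_+S_+$ with $S_+$ symmetric (see \eqref{C=SPS+}). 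What survives is a competition between $-3\phi(\ell)^2\phi'(\ell)^2$ and the cross term $2\langle B_+S_+q_{\mathrm{top}},r\rangle$, which are of the same exponential order and have no evident relative sign. Degeneracy of first-order spatial crossing forms is precisely the phenomenon this paper is organised around (cf.\ the third-order forms needed in \cref{lemma:L+_Mas_left_conjpoints}), so regularity of the terminal crossing cannot simply be asserted. Both your main argument (which feeds $s_0$ into \cref{rem:monotonicity_geometrically}) and your fallback stratum homotopy (which needs the terminal crossing to be transverse so that the sliding right endpoint keeps a trivial intersection with the moving reference $\E_+^s(L,0)$) rest on this claim, so the proof as written does not close.

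For comparison, the paper avoids evaluating the terminal form altogether. It first argues that the curves $\tau\mapsto\widehat\E_+^u(g(\tau),0)$ and $\tau\mapsto\widehat\E_+^u(\tau,0)$ are uniformly close, that their reference trains $\cT(\widehat\E_+^s(\tau_\ell,0))$ and $\cT(\Ss_+(0))$ are uniformly close, and that both terminal crossings are one-dimensional and isolated; hence the two curves arrive at their respective trains from the same side, which already yields $B=D$ and the lemma. It then identifies the common direction as negative (so that in fact $B=D=0$, a stronger fact needed later in \eqref{eq:box2}) by importing the sign from the monotonicity result \cref{lemma:L+_Mas_left_conjpoints} --- whose proof required the full higher-order crossing-form analysis --- via \cref{rem:monotonicity_geometrically}, rather than by a direct asymptotic evaluation at $x=\ell$. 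If you want to salvage your computation of $B$, you would need to carry out that higher-order analysis for the reference plane $\E_+^s(\ell,0)$, not assume first-order nondegeneracy.
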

	\begin{proof}
		By additivity under concatenation (cf. \cref{prop:enjoys}), we can write \eqref{eq:equiv_final_pairs} as
		\begin{align}\label{eq:final_left_pairs}
			\begin{split}
				\Mas(\widehat{\E}_+^u(g(\cdot),0),  \widehat{\E}_+^s(\tau_\ell,0); [-1,1-\e])+\Mas(\widehat{\E}_+^u(g(\cdot),0),  \widehat{\E}_+^s(\tau_\ell,0); [1-\e,1]) \\ = \Mas(\widehat{\E}_+^u(\cdot,0), \widehat{\E}_+^s(1,0); [-1,1-\e]) + \Mas(\widehat{\E}_+^u(\cdot,0), \widehat{\E}_+^s(1,0); [1-\e,1])
			\end{split}
		\end{align}
		for $\e>0$ small. \Cref{hypo:simplicity_assumption} implies that $\widehat{\E}_+^u(\tau,0) \in \cT_1(\widehat{\E}_+^s(\tau,0))$ for $\tau=\tau_\ell, 1$, i.e. that there exists a (one-dimensional) crossing at the final point of each of the paths 
		\begin{equation}\label{eq:2_paths}
			\tau \mapsto \left (\widehat{\E}_+^u(g(\tau),0),\widehat{\E}_+^s(\tau_\ell,0) \right ), \quad \tau \mapsto \left (\widehat{\E}_+^u(\tau,0),\Ss_+(0)\right ), \quad \tau\in[-1,1].
		\end{equation}
		Both of these crossings are isolated. Indeed, undoing the scaling by $g$ one sees that the first path $\tau\mapsto \widehat\E_+^u(\tau,0)$ is analytic at $\tau_\ell<1$. On the other hand, isolation of the final crossing of the second path in \eqref{eq:2_paths} follows from analyticity of $\tau\mapsto \widehat\E_+^u(\tau,0)$ for $\tau\in(-1,1)$, monotonicity of $\tau\mapsto \widehat\E_+^u(\tau,0)$ with respect to $\cT(\Ss_+(0))$, and the fact that $\widehat\E_+^u(\tau,0)$ approaches $\Ss_+(0)$ as $\tau\to 1^-$ (by assumption). Hence we can choose $\e>0$ small enough so that $\tau=1$ is the \emph{only} crossing in the interval $[1-\e,1]$ for the paths in \eqref{eq:2_paths}. With this choice, we now claim that 
		\begin{equation}\label{eq:final_conj_pairs}
			\Mas(\widehat{\E}_+^u(g(\cdot),0),  \widehat{\E}_+^s(\tau_\ell,0); [1-\e,1]) = \Mas(\widehat{\E}_+^u(\cdot,0), \widehat{\E}_+^s(1,0); [1-\e,1]) =0,
		\end{equation} 
		i.e. that the conjugate points occurring at the final points of each of the paths in  \eqref{eq:2_paths} do not contribute to their respective Maslov indices. Assuming the claim, by \eqref{eq:final_left_pairs} we have
		\[
		\Mas(\widehat{\E}_+^u(g(\cdot),0),  \widehat{\E}_+^s(\tau_\ell,0); [-1,1-\e]) = \Mas(\widehat{\E}_+^u(\cdot,0), \widehat{\E}_+^s(1,0); [-1,1-\e]).
		\]
		Recalling \cref{rem:open_intervals}, this is exactly \eqref{l411} (for a different but still arbitrarily small $\e$).
		
		It remains to prove \eqref{eq:final_conj_pairs}. To that end, note that the paths $\tau \mapsto \widehat{\E}_+^u(g(\tau),0)$ and $\tau \mapsto \widehat{\E}_+^u(\tau,0)$ are arbitrarily close to one another: for any given $\delta>0$ small, we can choose $\ell=\ln( (2-\delta)/\delta )$ so that $\tau_\ell = 1-\delta$, in which case
		\[
		|g(\tau) - \tau| = \left (\f{1-\tau_\ell}{2} \right )(\tau+1) \leq \delta,
		\]
		uniformly for $\tau\in[-1,1]$. In addition, for large enough $\ell$ the trains $\cT(\widehat{\E}_+^s(\tau_\ell,0) )$ and $\cT(\Ss_+(0))$ are arbitrarily small perturbations of one another. Since the final crossings of \eqref{eq:2_paths} are both one-dimensional by \cref{hypo:simplicity_assumption}, we conclude that the curves $\tau \mapsto \widehat{\E}_+^u(g(\tau),0)$ and $\tau \mapsto \widehat{\E}_+^u(\tau,0)$ approach $\cT_1(\widehat{\E}_+^s(\tau_\ell,0) )$ and $\cT_1(\Ss_+(0))$, respectively, from the same direction as $\tau\to1^-$. This proves the first equality in \eqref{eq:final_conj_pairs}. It follows from \cref{lemma:L+_Mas_left_conjpoints} and \cref{rem:monotonicity_geometrically} that the crossing at $\tau_\ell<1$ of the path $\tau \mapsto \left (\widehat{\E}_+^u(\tau,0),\widehat{\E}_+^s(\tau_\ell,0) \right )$ is negative, i.e. the crossing at $\tau=1$ of the first path in \eqref{eq:2_paths} is negative. In line with \cref{define:Maslov_GPP}, if the final crossing is one-dimensional and transverse, its contribution to the Maslov index is $+1$ if the path arrives at the train in the positive direction, and zero otherwise. Hence $\Mas(\widehat{\E}_+^u(g(\cdot),0),  \widehat{\E}_+^s(\tau_\ell,0); [1-\e,1])= 0$, and \eqref{eq:final_conj_pairs} follows. The proof for the $L_-$ problem is similar.
	\end{proof}

\subsection{Computing the Maslov index along $\Gamma_2$} In the following we prove monotonicity of the paths $\la \mapsto (\E_\pm^u(\ell,\la), \E_\pm^s(\ell,\la))$.
\begin{lemma}\label{lemma:L+_topP}
	Each crossing of the path of Lagrangian pairs $\la \mapsto (\E_+^u(\ell,\la), \E_+^s(\ell,\la))$ is positive, thus,
	\begin{equation}\label{eq:Mas_P}
		\Mas(\E_+^u(\ell,\cdot), \E_+^s(\ell,\cdot); [\e,\la_\infty]) = P
	\end{equation}
	for $\e>0$ small enough. Similarly, each crossing of the path $\la \mapsto (\E_-^u(\ell,\la), \E_-^s(\ell,\la))$ is negative, and we have
	\begin{equation}\label{eq:Mas_Q}
		\Mas(\E_-^u(\ell,\cdot), \E_-^s(\ell,\cdot); [\e,\la_\infty]) = -Q.
	\end{equation}
\end{lemma}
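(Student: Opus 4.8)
The plan is to prove both statements by the standard monotonicity-in-the-spectral-parameter argument, adapted to the fourth-order systems \eqref{1st_order_sys_L+} and \eqref{1st_order_sys_L-}: I will show that the first-order relative crossing form \eqref{rel_cross_form} in $\la$ is sign-definite at every crossing, positive for the $L_+$ path and negative for the $L_-$ path. First I would identify the crossings. A value $\la_0\in[\e,\la_\infty]$ is a crossing of $\la\mapsto(\E_+^u(\ell,\la),\E_+^s(\ell,\la))$ precisely when $\E_+^u(\ell,\la_0)\cap\E_+^s(\ell,\la_0)\neq\{0\}$, and any $w_0$ in this intersection is the value at $x=\ell$ of a solution $\mathbf{u}(x)$ of \eqref{1st_order_sys_L+} decaying as $x\to\pm\infty$; equivalently, its component $u=u_2$ is an $L^2$-eigenfunction of $L_+$ with eigenvalue $\la_0$. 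Since $\la_0>\e>0$ lies strictly to the right of $\esspec(L_+)\subset\R^-$, the dichotomy is robust and $\mathbf{u}$ decays exponentially.

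Next I would compute the form \eqref{rel_cross_form} with $k=1$. Choosing analytic root-function families $w_1(\la)\in\E_+^u(\ell,\la)$ and $w_2(\la)\in\E_+^s(\ell,\la)$ with $w_1(\la_0)=w_2(\la_0)=w_0$, I extend them to solutions $\mathbf{u}^u(x,\la)$ and $\mathbf{u}^s(x,\la)$ of the ODE coinciding with $\mathbf{u}$ at $\la=\la_0$. The key identity is
\[
\de{}{x}\big\langle J\,\p_\la\mathbf{u}^{u}(x,\la_0),\mathbf{u}(x)\big\rangle = \big\langle J\,(\p_\la A_+)\mathbf{u},\mathbf{u}\big\rangle ,
\]
which follows by differentiating the ODE in $\la$ and using the infinitesimally symplectic relation \eqref{infsymp}, exactly as in the proof of \cref{lemma:lagrangian_subspaces}, so that the two $A_+$-terms cancel. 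Since $\p_\la C_+=\diag\{0,-1\}$, a direct computation gives $\langle J(\p_\la A_+)\mathbf{u},\mathbf{u}\rangle = u_2^2 = u^2$. Integrating over $(-\infty,\ell]$ for the unstable family and over $[\ell,+\infty)$ for the stable family, and using that the boundary terms at $\pm\infty$ vanish by exponential decay, yields
\[
\mathfrak{m}_{\la_0}(\E_+^u(\ell,\cdot),\E_+^s(\ell,\cdot))(w_0) = \big\langle J(\p_\la\mathbf{u}^u-\p_\la\mathbf{u}^s)(\ell,\la_0),w_0\big\rangle = \int_{-\infty}^{\infty} u(x)^2\,dx > 0 .
\]

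Hence every crossing form is positive definite; in particular each crossing is regular and positive, contributing $\dim(\E_+^u(\ell,\la_0)\cap\E_+^s(\ell,\la_0))=\dim\ker(L_+-\la_0)$ to the Maslov index. Choosing $\e>0$ small and $\la_\infty$ large so that no crossing occurs at the endpoints and all positive eigenvalues of $L_+$ lie in $(\e,\la_\infty)$, summation over the interior crossings gives \eqref{eq:Mas_P}. For $L_-$ the computation is identical except that $\p_\la C_-=\diag\{0,1\}$ and $v_2=-v$, so the analogous identity produces $\langle J(\p_\la A_-)\mathbf{v},\mathbf{v}\rangle=-v^2$ and the crossing form equals $-\int_{-\infty}^{\infty} v^2\,dx<0$; every crossing is then negative and contributes $-\dim\ker(L_--\la_0)$, yielding \eqref{eq:Mas_Q}.

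The main obstacle is the rigorous justification of the vanishing boundary terms, namely that both the eigenfunction $\mathbf{u}$ and the $\la$-derivatives $\p_\la\mathbf{u}^{u}$, $\p_\la\mathbf{u}^{s}$ of the decaying solution families decay at the appropriate infinity. This rests on the analyticity of the bundles $\la\mapsto\E_\pm^{u,s}(x,\la)$ and the exponential dichotomies with $\la$-uniform rates recorded in \cref{sec:setup2}; one must also confirm that the root functions can be chosen to depend analytically on $\la$, so that \cref{define:Maslov_GPP} applies and, in contrast to the crossings along $\Gamma_1$, no higher-order forms are needed because the first-order form is already nondegenerate.
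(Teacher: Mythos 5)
Your proposal is correct and follows essentially the same route as the paper: compute the first-order relative crossing form in $\la$ by extending the root function pair to solution families of \eqref{1st_order_sys_L+}, use the infinitesimal symplecticity \eqref{infsymp} to reduce the derivative of the symplectic form to $\int \w(\p_\la A_+\,\mathbf{u},\mathbf{u})\,dx = \int u_2^2\,dx$ over each half-line, and combine the unstable and stable contributions (which agree at $\la_0$ by uniqueness of solutions) to get $\int_{-\infty}^{\infty}u^2\,dx>0$, with the sign flip for $L_-$ coming from $\p_\la A_-$. The paper phrases the identity as a fundamental-theorem-of-calculus computation from $-\infty$ to $\ell$ rather than as a pointwise $x$-derivative identity, but this is the same argument.
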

\begin{proof}
	We begin with the proof of \eqref{eq:Mas_P}. We proceed by computing the (first-order) relative crossing form \eqref{rel_cross_form} at each crossing $\la=\la_0$, given here by
	\begin{equation}\label{relative_crossform_L+}
		\mathfrak{m}_{\la_0}(\E_+^u(\ell,\cdot),\E_+^s(\ell,\cdot))(w_0) =  \de{}{\la} \w (p(\la),  w_0) \big|_{\la=\la_0} - \de{}{\la} \w (q(\la), w_0) \big|_{\la=\la_0},
	\end{equation}
	where $w_0\in W_1(\E_+^u(\ell,\cdot),\E_+^s(\ell,\cdot),\la_0) = \E_+^u(\ell,\la_0) \cap \E_+^s(\ell,\la_0)$ and $(p,q)$ is a root function pair for $\left ( \E_+^u(\ell,\cdot),\E_+^s(\ell,\cdot) \right )$ with $p(\la_0) = q(\la_0)=w_0$. We compute each of the terms on the right hand side separately.
	
	For the first, recall that $p(\la)\in \E_+^u(\ell,\la)$ for $\la\in[\la_0-\e,\la_0+\e]$ with $\e>0$ small. From the definition of $\E_+^u(\ell,\la)$, it follows that there exists a one-parameter family of solutions $\la\mapsto\mathbf{u}(\cdot;\la)$ to \eqref{1st_order_sys_L+} satisfying $\mathbf{u}(x;\la)\to 0$ as $x\to -\infty$, such that $\mathbf{u}(\ell;\la)=p(\la)$ and $\mathbf{u}(\ell;\la_0) = w_0$. Now
	\begin{align}
		\begin{split}\label{eq:omega_calc}
			\begin{split}
				\w \Big(\de{}{\la}&\mathbf{u}(\ell,\la),\mathbf{u}(\ell,\la)\Big )= \int_{-\infty}^\ell \p_x\, \w(\p_\la\mathbf{u}(x;\la), \mathbf{u}(x;\la) ) \,dx, \\
				&= \int_{-\infty}^\ell \w\left (\p_\la \big [A_+(x;\la)\mathbf{u}(x;\la)\big],  \mathbf{u}(x;\la) \right ) +\w(\p_\la \mathbf{u}(x;\la), A_+(x;\la)\mathbf{u}(x;\la) ) \,dx,
			\end{split}\\
			\begin{split}
				&= \int_{-\infty}^\ell 
				\w(\p_\la \left (A_+(x;\la)\right )\mathbf{u}(x;\la),\mathbf{u}(x;\la) ) + \w(A_+(x;\la)\p_\la\mathbf{u}(x;\la),\mathbf{u}(x;\la) )
				\\ &\qquad \qquad \qquad  +\w(\p_\la \mathbf{u}(x;\la),A_+(x;\la)\mathbf{u}(x;\la) ) \,dx,
			\end{split}\\
			\begin{split}
				&= \int_{-\infty}^\ell 
				\w(\p_\la \left (A_+(x;\la)\right )\mathbf{u}(x;\la),\mathbf{u}(x;\la) )  \\ &\qquad  \qquad \qquad + \big\langle [A_+(x;\la)^\top J + J A_+(x;\la)]\p_\la\mathbf{u}(x;\la),\mathbf{u}(x;\la)\big \rangle \,dx,
			\end{split}\\
			&= \int_{-\infty}^\ell \w(\p_\la \left (A_+(x;\la)\right )\mathbf{u}(x;\la) ,\mathbf{u}(x;\la)) \,dx, 
		\end{split}
	\end{align}
	where we used that $\lim_{x\to-\infty}\mathbf{u}(x;\la)=0$ in the first line and \eqref{infsymp} in the last line. Since
	\begin{equation*}\label{}
		\p_\la  A_+(x;\la)= \left(
		\begin{array}{cccccccc}
			0 & 0 & 0 & 0 \\
			0 & 0 & 0 & 0 \\
			0 & 0 & 0 & 0 \\
			0 & -1 & 0 & 0
		\end{array}
		\right), \qquad 
	\end{equation*}
	and $\mathbf{u}=(u_1, u_2,u_3, u_4)^\top$, evaluating the last line of \eqref{eq:omega_calc} at $\la=\la_0$ we have
	\begin{align}\label{eq:relative_L+_firstterm}
		\de{}{\la} \w (p(\la),  w_0) \big|_{\la=\la_0} = \w \Big(\de{}{\la}\mathbf{u}(\ell,\la),\mathbf{u}(\ell,\la)\Big )\big|_{\la=\la_0}= \int_{-\infty}^\ell u_2(x;\la_0)^2 \,dx.
	\end{align}
	For the second term of \eqref{relative_crossform_L+}, we consider  $q(\la) \in \E_+^s(\ell,\la)$ for $\la\in[\la_0-\e,\la_0+\e]$ with $\e>0$ small. For the same $w_0$ used to compute the first term of \eqref{relative_crossform_L+}, there exists a one-parameter family of solutions $\la\mapsto \widetilde{\mathbf{u}}(\cdot;\la)$ to \eqref{1st_order_sys_L+} satisfying $\widetilde{\mathbf{u}}(x;\la)\to 0$ as $x\to +\infty$, such that $\widetilde{\mathbf{u}}(\ell;\la)=q(\la)$ and $\widetilde{\mathbf{u}}(\ell;\la_0)=w_0$. Arguing as previously, but now using the decay at $+\infty$, we have
	\begin{equation}\label{eq:relative_L+_secondterm}
		\de{}{\la} \w (q(\la), w_0) \big|_{\la=\la_0}= \w\Big(\de{}{\la} \widetilde{\mathbf{u}}(\ell;\la) ,\widetilde{\mathbf{u}}(\ell;\la)\Big)\big|_{\la=\la_0} =  -\int_{\ell}^\infty \widetilde{u}_2(x;\la_0)^2\,dx
	\end{equation}
	(where $\widetilde{\mathbf{u}}=(\widetilde{u}_1, \widetilde{u}_2, \widetilde{u}_3, \widetilde{u}_4)^\top$). Importantly, by uniqueness of solutions we have $\widetilde{\mathbf{u}}(\cdot;\la_0) = \mathbf{u}(\cdot;\la_0)$, so that the integrands in \eqref{eq:relative_L+_secondterm} and  \eqref{eq:relative_L+_firstterm} are the same. Therefore,  \eqref{relative_crossform_L+} becomes
	\begin{equation*}
		\mathfrak{m}_{\la_0}(\E_+^u(\ell,\cdot),\E_+^s(\ell,\cdot))(w_0) =  \int_{-\infty}^\infty u_2(x;\la_0)^2 \,dx >0.
	\end{equation*}
	Thus $n_+(\mathfrak{m}_{\la_0}) = \dim\E_+^u(\ell,\la_0)\cap\E_+^s(\ell,\la_0)$, and all crossings are positive. It follows that the Maslov index counts the number of crossings (up to dimension) of the Lagrangian pair $\la \mapsto (\E_+^u(\ell,\la), \E_+^s(\ell,\la))$, $\la\in[\e,\la_\infty]$, for $\e>0$ small enough. But this is precisely a count of the number of positive eigenvalues of $L_+$ up to multiplicity, i.e. equation \eqref{eq:Mas_P} holds.

	For the path $\la \mapsto (\E_-^u(\ell,\la), \E_-^s(\ell,\la))$, $\la\in[0,\la_\infty]$, the argument is similar. Now the Maslov index counts, with negative sign, the number of crossings along $\Gamma_2$. The sign change results from the fact that $\la$ now appears with positive sign in the first order system \eqref{1st_order_sys_L-}, so that
	\begin{equation*}%\label{}
		\p_\la A_-(x;\la) = \left(
		\begin{array}{cccccccc}
			0 & 0 & 0 & 0 \\
			0 & 0 & 0 & 0 \\
			0 & 0 & 0 & 0 \\
			0 & 1 & 0 & 0
		\end{array}
		\right).
	\end{equation*}
	The associated crossing form will then be negative, and by similar reasoning equation \eqref{eq:Mas_Q} holds. 
\end{proof}

\subsection{Computing the Maslov index along $\Gamma_3$ and $\Gamma_4$}

The following lemma shows that there are no crossings along $\Gamma_3$ and $\Gamma_4$.
\begin{lemma}\label{lemma:L+_no_crossings}
	We have $\E_+^u(x,\la_\infty) \cap \E_+^s(\ell,\la_\infty) =\{0\}$ for all $x\in\R$, provided both $\la_\infty>0$ and $\ell>0$ are large enough; hence
	 \[
	 \Mas(\E_+^u(\cdot,\la_\infty), \E_+^s(\ell,\la_\infty); [-\infty,\ell]) = 0.
	 \]
	 In addition, $\U_+(\la) \cap \E_+^s(\ell,\la) =\{0\}$ for all $\la \geq 0$ provided $\ell>0$ is large enough; hence
	\begin{equation*}%\label{}
		\Mas( \U_+(\cdot), \E_+^s(\ell,\cdot);[0,\la_\infty]) = 0.
	\end{equation*}
	Similar statements hold for the paths  $x\mapsto \left (\E_-^u(x,\la_\infty), \E_-^s(\ell,\la_\infty)\right )$ and $\la\mapsto \left (\U_+(\la), \E_+^s(\ell,\la\right )$.
%	 In addition, $\U_+(\la) \cap \E_+^s(\ell,\la) =\{0\}$ for all $\la \geq 0$ provided $\ell>0$ is large enough. Therefore
%	\begin{equation*}%\label{}
%		\Mas(\E_+^u(\cdot,\la_\infty), \E_+^s(\ell,\la_\infty); [-\infty,\ell]) = 	\Mas( \U_+(\cdot), \E_+^s(\ell,\cdot);[0,\la_\infty]) = 0.
%	\end{equation*}
%	Similar statements hold for the paths  $x\mapsto \left (\E_-^u(x,\la_\infty), \E_-^s(\ell,\la_\infty)\right )$ and $\la\mapsto \left (\U_+(\la), \E_+^s(\ell,\la\right )$.
\end{lemma}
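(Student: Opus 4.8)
The plan is to prove the two transversality statements separately and then invoke the zero property (property (4) of \cref{prop:enjoys}): if a Lagrangian pair never intersects, then its associated path $\Lambda_1\oplus\Lambda_2$ stays in the single stratum $\cT_0(\triangle)$, so its Maslov index vanishes. Thus it suffices to establish (i) $\U_+(\la)\cap\E_+^s(\ell,\la)=\{0\}$ for all $\la\in[0,\la_\infty]$ with $\ell$ large (the $\Gamma_4$ claim), and (ii) $\E_+^u(x,\la_\infty)\cap\E_+^s(\ell,\la_\infty)=\{0\}$ for all $x\in\R$ with $\la_\infty$ and $\ell$ large (the $\Gamma_3$ claim). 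The $L_-$ statements, and the statement for $\la\mapsto(\U_-(\la),\E_-^s(\ell,\la))$, follow by identical arguments, since the only structural inputs are the hyperbolicity of $A_\pm(\la)$ and the exponential localisation of the potential; the sign of $\la$ in \eqref{1st_order_sys_L-} is irrelevant to transversality.

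For the $\Gamma_4$ claim (the easier one), $\la$ ranges over the compact interval $[0,\la_\infty]$, so no asymptotics in $\la$ are needed. Since $A_+(\la)$ is hyperbolic for every $\la\geq0$ (its eigenvalues, computed above, are never purely imaginary once $\la$ lies to the right of $\esspec(L_+)$, recall \eqref{essspecL}), the subspaces $\Ss_+(\la)$ and $\U_+(\la)$ are complementary, whence $\U_+(\la)\in\cT_0(\Ss_+(\la))$. By \eqref{bundle_limits} (see \cite{PSS97}) together with roughness of the dichotomy on $\R^+$ (\cite{sandstede02}), $\E_+^s(\ell,\la)\to\Ss_+(\la)$ in $\Gr_2(\R^4)$ as $\ell\to\infty$, uniformly for $\la$ in the compact set $[0,\la_\infty]$. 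As $\cT_0(\U_+(\la))$ is open and contains $\Ss_+(\la)$ with a gap bounded below by compactness, for $\ell$ large enough $\E_+^s(\ell,\la)\in\cT_0(\U_+(\la))$ for every $\la\in[0,\la_\infty]$, which is exactly (i). This is the mechanism already flagged in \cref{rem:why_not_stable_subspace} via \eqref{eq:ell_large_enough}.

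The $\Gamma_3$ claim is the main obstacle, because $\la=\la_\infty$ is fixed while the intersection must be excluded for \emph{every} $x\in\R$; unlike $\Gamma_4$ we cannot use compactness in $x$, and the naive perturbation bound fails since the term $3\phi^2$ does not shrink as $\la\to\infty$. The idea is that for large $\la$ the rotation of $\E_+^u(x,\la)$ along the whole line is suppressed, the potential being small only \emph{relative} to the dichotomy rate, which grows like $\la^{1/4}$. I would make this quantitative by rescaling $x=\la^{-1/4}y$: after dividing the coefficient matrix by $\la^{1/4}$, the system becomes an $O(\la^{-1/2})$ perturbation, uniform in $y$ because the only $x$-dependence enters through the bounded exponentially localised term $3\phi^2$, of the constant matrix associated with $-u_{yyyy}=u$. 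That limiting matrix is hyperbolic with two-dimensional stable and unstable subspaces transverse with a minimal principal angle bounded below independently of $\la$. By roughness of exponential dichotomies (\cite{sandstede02}), the unstable bundle of the full system then stays within $o(1)$ of $\U_+(\la_\infty)$ in $\Gr_2(\R^4)$, uniformly in $x\in\R$, as $\la_\infty\to\infty$; the same rescaling shows the transversality gap between $\U_+(\la_\infty)$ and $\E_+^s(\ell,\la_\infty)$ from (i) is bounded below uniformly in large $\la_\infty$, reconciling the quantifiers. Choosing $\la_\infty$ large enough that the uniform closeness beats this gap yields $\E_+^u(x,\la_\infty)\cap\E_+^s(\ell,\la_\infty)=\{0\}$ for all $x$, which is (ii).

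With (i) and (ii) established, the pairs $x\mapsto(\E_+^u(x,\la_\infty),\E_+^s(\ell,\la_\infty))$ and $\la\mapsto(\U_+(\la),\E_+^s(\ell,\la))$ have no crossings on $[-\infty,\ell]$ and $[0,\la_\infty]$ respectively, so the zero property of \cref{prop:enjoys} forces both Maslov indices to vanish. The corresponding $L_-$ statements follow verbatim.
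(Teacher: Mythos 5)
Your proposal is correct, and its overall architecture matches the paper's: the $\Gamma_4$ statement is handled exactly as in the paper (hyperbolicity of $A_+(\la)$ gives $\U_+(\la)\cap\Ss_+(\la)=\{0\}$, and convergence $\E_+^s(\ell,\la)\to\Ss_+(\la)$ plus compactness of $[0,\la_\infty]$ gives a single $\ell$ that works), and the $\Gamma_3$ statement is attacked via the same rescaling $y=\la^{1/4}x$ that reduces the problem to a uniformly small perturbation of a constant hyperbolic system. Where you genuinely diverge is in how the uniform-in-$x$ transversality is extracted from that rescaling: the paper follows Cornwell and \cite{AGJ90}, viewing the rescaled flow as a flow on $\Gr_2(\R^4)$, exhibiting $\widetilde{\U}_+(\la)$ as an attracting fixed point with a trapping region $\mathcal{R}$ disjoint from $\widetilde{\Ss}_+(\la)$ that persists for the nonautonomous system, whereas you invoke roughness of exponential dichotomies on all of $\R$ to conclude that the dichotomy projections (and hence $\E_+^u(\cdot,\la)$, which is the range of the unstable projection) stay uniformly $O(1/\la)$-close to $\U_+(\la)$. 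Both are valid; your route has the side benefit that the existence of a dichotomy on $\R$ for large $\la$ already implies $\la\notin\spec(L_+)$, so you do not need the paper's preliminary step bounding $\spec(L_+)$ from above via \cite[Theorem V.4.10]{Kato} (which the paper uses to guarantee $\lim_{y\to\infty}\widetilde{\E}_+^u(y,\la)=\widetilde{\U}_+(\la)$). One small point of order: your worry about uniformity of the transversality gap in $\la_\infty$ is unnecessary — the lemma only requires one to fix $\la_\infty$ first and then choose $\ell$ depending on it, which is how the paper proceeds — but this does not affect the validity of the argument.
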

\begin{proof}
	The strategy of the following proof mirrors the one given in \cite[\S4]{Corn19} (see also \cite[\S3 and \S5.B]{AGJ90}). For the first statement, we first note that $\spec(L_+)$ is bounded from above. To see this, we write
	\begin{equation*}%\label{eq:D+V}
		L_+ = \cA+\cV, \qquad \cA=-\p_{xxxx}-\sigma_2 \p_{xx}, \quad \cV = -\be + 3\phi(x)^2,
	\end{equation*}
	where $\dom(\cA)=H^4(\R)$, so that $\cA=\cA^*$ is selfadjoint and $\cV$ is bounded and symmetric on $L^2(\R)$. It can be shown that $\cA$ has no point spectrum, and moreover that $\spec(\cA) = \esspec(\cA) = (-\infty,1/4]$ if $\sigma_2=1$, and  $\spec(\cA)= \esspec(\cA)=(-\infty,0]$ if $\sigma=-1$. Consequently, by virtue of \cite[Theorem V.4.10, p.291]{Kato} we have
	\begin{equation*}\label{}
		\dist\left (\spec(L_+), \spec(\cA)  \right ) \leq \|\cV\|,
	\end{equation*}
	so that $\spec(L_+) \subseteq (-\infty,\|\cV\|]$. It then follows that $\E_+^u(\ell,\la)\cap \E^s_+(\ell,\la) =\{0\}$ for all $\la>\|\cV\|$.
	
	Next, we claim that there exists a $\la_\infty>\|\cV\|$ such that 
	\begin{equation*}%\label{claim_transverse_large_la}
		\E_+^u(x,\la) \cap \Ss_+(\la) = \{0\}
	\end{equation*}
	for all $x\in\R$ and all $\la\geq \la_\infty$. Once this is shown, it follows that there exists an $\ell_\infty\gg 1$ such that
	\begin{equation*}\label{}
		\E_+^u(x,\la_\infty) \cap \E^s_+(\ell,\la_\infty) = \{0\}
	\end{equation*}
	for all $x\in\R$ and all $\ell\geq \ell_\infty$, because $\lim_{x\to\infty}\E^s_+(x,\la) = \Ss_+(\la)$. To prove the claim, we mimic the proof of \cite[Lemma 4.1]{Corn19}. Using the change of variables
	\begin{equation}\label{eq:change_variables}
		y=\lambda^{1/4} x, \qquad \widetilde{u}_1=u_1, \quad\widetilde{u}_2= \lambda^{1/2}u_2,   \quad\widetilde{u}_3= \lambda^{1/4}u_3,  \quad\widetilde{u}_4= \lambda^{-1/4}u_4,
	\end{equation}
	transforms \eqref{1st_order_sys_L+} to the system 
	\begin{equation}\label{1st_order_sys_L+_rescaled_lam}
		\de{}{y} \begin{pmatrix}
			\widetilde{u}_1  \\  \widetilde{u}_2 \\  \widetilde{u}_3 \\  \widetilde{u}_4 
		\end{pmatrix} = 
		\begin{pmatrix}
			0 &  0 & \f{\sigma_2}{\sqrt{\la}}    & 1  \\
			0 & 0 &	1  & 0 \\
			1  &-\f{\sigma_2}{\sqrt{\la}}    & 0 & 0 \\ 
			-\f{\sigma_2}{\sqrt{\la}}   & \f{\al\left (\f{y}{\sqrt[4]{\lambda}}\right )}{\la}-1  & 0 & 0
		\end{pmatrix}
		\begin{pmatrix}
			\widetilde{u}_1  \\  \widetilde{u}_2 \\  \widetilde{u}_3 \\  \widetilde{u}_4 
		\end{pmatrix}
	\end{equation}
	(recall that $\al\left (\f{y}{\sqrt[4]{\lambda}}\right ) = 3\phi(\f{y}{\sqrt[4]{\lambda}})^2 - \be +1$). Taking $y\to\pm\infty$, the asymptotic system for \eqref{1st_order_sys_L+_rescaled_lam} is given by
	\begin{equation}\label{1st_order_sys_L+_rescaled_lam_asymptotic}
		\de{}{y} \begin{pmatrix}
			\widetilde{u}_1  \\  \widetilde{u}_2 \\  \widetilde{u}_3 \\  \widetilde{u}_4 
		\end{pmatrix} = 
		\begin{pmatrix}
			0 &  0 & \f{\sigma_2}{\sqrt{\la}}    & 1  \\
			0 & 0 &	1  & 0 \\
			1  &-\f{\sigma_2}{\sqrt{\la}}    & 0 & 0 \\ 
			-\f{\sigma_2}{\sqrt{\la}}   & \f{-\be+1}{\la}-1  & 0 & 0
		\end{pmatrix}
		\begin{pmatrix}
			\widetilde{u}_1  \\  \widetilde{u}_2 \\  \widetilde{u}_3 \\  \widetilde{u}_4 
		\end{pmatrix} .
	\end{equation}
	Denote the stable and unstable subspaces for \eqref{1st_order_sys_L+_rescaled_lam_asymptotic} by $\widetilde{\Ss}_+(\la)$ and $\widetilde{\U}_+(\la)$ respectively, and denote the unstable bundle of \eqref{1st_order_sys_L+_rescaled_lam} by $\widetilde{\E}^u_+(y,\la)$. Then, we have
	\begin{equation*}\label{}
		\E_+^u(x,\la) \cap \Ss_+(\la) = \{0\} \iff \widetilde{\E}^u_+(\lambda^{1/4} x,\la)\cap \widetilde{\Ss}_+(\la) =\{0\},
	\end{equation*}
	since from \eqref{eq:change_variables} one has $\widetilde{\E}^u_+(\lambda^{1/4} x,\la) = M \cdotp \E^u(x,\la)$ and $\widetilde{\Ss}_+(\la)=M\cdotp  \Ss_+(\la)$, where $M=\diag\{1,\la^{1/2},\la^{1/4},$ $\la^{-1/4}\}$ is nonsingular and $``\cdot"$ is the induced action of $M$ on $\R^4$.

	Both the nonautonomous system \eqref{1st_order_sys_L+_rescaled_lam} and the autonomous system \eqref{1st_order_sys_L+_rescaled_lam_asymptotic} induce flows on $\Gr_{2}(\R^{4})$. For the flow associated with \eqref{1st_order_sys_L+_rescaled_lam_asymptotic}, it is known \cite{AGJ90} that $\widetilde{\U}_+(\la)$, the invariant subspace associated with eigenvalues of positive real part, is an attracting fixed point. Thus, since $\cL(2)\subset \Gr_2(\R^4)$, there exists a trapping region $\mathcal{R}\subset \cL(2)$ containing $\widetilde{\U}_+(\la)$. By taking $\la$ large enough, we can ensure that the flow induced by \eqref{1st_order_sys_L+_rescaled_lam} is as close as we like to that induced by \eqref{1st_order_sys_L+_rescaled_lam_asymptotic}, because $\phi\left (\f{y}{\sqrt[4]{\lambda}}\right )^2/\la$ --  the nonautonomous part of \eqref{1st_order_sys_L+_rescaled_lam} -- is close to zero. It follows that $\mathcal{R}\subset \cL(2)$ is also a trapping region for \eqref{1st_order_sys_L+_rescaled_lam}. Furthermore, we can choose $\mathcal{R}$ small enough such that $\mathbb{V} \cap \widetilde{\Ss}_+(\la)=\{0\}$ for all $\mathbb{V}\in \mathcal{R}$, uniformly for $\la$ large enough. To see this, note that clearly $\widetilde{\Ss}_+(\la) \cap \widetilde{\U}_+(\la) =\{0\}$, while taking $\la\to+\infty$ in \eqref{1st_order_sys_L+_rescaled_lam_asymptotic} yields
	\begin{equation*}%\label{1st_order_sys_L+_rescaled_lam_asymptotic_la_infty}
		\de{}{y} \begin{pmatrix}
			\widetilde{u}_1  \\  \widetilde{u}_2 \\  \widetilde{u}_3 \\  \widetilde{u}_4 
		\end{pmatrix} = 
		\begin{pmatrix}
			0 &  0 & 0   & 1  \\
			0 & 0 &	1  & 0 \\
			1  & 0   & 0 & 0 \\ 
			0  & -1  & 0 & 0
		\end{pmatrix}
		\begin{pmatrix}
			\widetilde{u}_1  \\  \widetilde{u}_2 \\  \widetilde{u}_3 \\  \widetilde{u}_4 
		\end{pmatrix},
	\end{equation*}
	which has stable and unstable subspaces $\widetilde{\Ss}_{+_\infty}$ and $\widetilde{\U}_{+_\infty}$ with respective frames $(I,-W)$ and $(I,W)$, where
	\[
	W=\f{1}{\sqrt{2}}\begin{pmatrix}
		1 & 1 \\ 1 & -1
	\end{pmatrix}.
	\]
	Thus, in the limit we also have $\widetilde{\Ss}_{+_\infty}\cap \widetilde{\U}_{+_\infty}=\{0\}$, so we can choose $\mathcal{R}$ as stated. Finally, we note that if $\la>\|\cV\|$ so that $\la\notin \spec(L_+)$, then by \cite[Lemma 3.7]{AGJ90} we have $\lim_{y\to\infty} \widetilde{\E}^u_+(y,\la) = \widetilde{\U}_+(\la)$. All in all, we conclude that for any $\la_\infty>\|\cV\|$, the trajectory $\widetilde{\E}^u_+(\cdot,\la_\infty):[-\infty,\infty]\to \cL(2)$, which starts and finishes at $\widetilde{\U}_+(\la_\infty)$, will remain inside $\mathcal{R}$ and thus always be disjoint from $\widetilde{\Ss}_+(\la_\infty)$. This proves the claim. 
	
	For the second statement of the lemma, the facts that  $\U_+(\la) \cap \,\Ss_+(\la) = \{0\}$ for all $\la\geq 0$ and $\lim_{x\to\infty}\E_+^s(x,\la) = \Ss_+(\la)$ imply that there exists an $\ell_\infty\gg 1$ such that $\U_+(\la)\cap \E_+^s(x,\la)=\{0\}$ for all $x\geq \ell_\infty$. Taking $x=\ell>\ell_\infty$ gives the result.
\end{proof}

\subsection{Proving the Morse-Maslov theorem}
In what follows, we choose $\ell>0$ and $\la_\infty>0$ large enough in accordance with \cref{lemma:L+_no_crossings}.
\begin{proof}[Proof of \cref{thm:Lpm_eval_counts}]
	By homotopy invariance and additivity under concatenation, we have 
	\begin{multline*}%\label{eq:Mas_L+_box}
		\Mas(\E_+^u(\cdot,0),\E_+^s(\ell,0);[-\infty,\ell] ) + \Mas(\E_+^u(\ell,\cdot),\E_+^s(\ell,\cdot);[0,\la_\infty] )  \\  -\Mas(\E_+^u(\cdot,\la_\infty),\E_+^s(\ell,\la_\infty);[-\infty,\ell] )  -\Mas(\E_+^u(-\infty,\cdot),\E_+^s(\ell,\cdot);[0,\la_\infty] ) =0.
	\end{multline*}
	From \cref{lemma:L+_no_crossings} the third and fourth terms on the left hand side vanish. Again using the concatenation property, we find that
	\begin{multline}\label{eq:box1}
		\Mas(\E_+^u(\cdot,0),\E_+^s(\ell,0);[-\infty,\ell-\e] ) + \Mas(\E_+^u(\cdot,0),\E_+^s(\ell,0);[\ell-\e,\ell] ) \\+ \Mas(\E_+^u(\ell,\cdot),\E_+^s(\ell,\cdot);[0,\e] ) +\Mas(\E_+^u(\ell,\cdot),\E_+^s(\ell,\cdot);[\e,\la_\infty] ) =0
	\end{multline}
	where $\e>0$ is small. The second and third terms of \eqref{eq:box1}  represent the contributions to the Maslov index from the conjugate point $(x,\la) = (\ell,0)$ at the top left corner of the Maslov box in the $x$ and $\la$ directions respectively. From \eqref{eq:final_conj_pairs}, \cref{lemma:L+_topP} and \cref{define:Maslov_GPP} we have  
	\begin{equation}\label{eq:box2}
		\Mas(\E_+^u(\cdot,0),\E_+^s(\ell,0);[\ell-\e,\ell] ) = \Mas(\E_+^u(\ell,\cdot),\E_+^s(\ell,\cdot);[0,\e]) =0.
	\end{equation} 
	\Cref{lemma:L+_Mas_left_conjpoints,lemma:L+_half_left_full_left} imply that
	\begin{equation}\label{left_conj_points_L+}
		\Mas(\E_+^u(\cdot,0),\E_+^s(\ell,0);[-\infty,\ell-\e] ) =  -p_c.
	\end{equation}
	The previous three equations along with \cref{lemma:L+_topP} now yield \eqref{eq:P=conj_points}.
	
	The proof for the Morse index of the $L_-$ operator is similar. This time, crossings along $\Gamma_1$ are positive, while crossings along $\Gamma_2$ are negative. Arguing as we did for \eqref{eq:final_conj_pairs}, we have
	\begin{equation}\label{eq:box3}
		\Mas(\E_-^u(\cdot,0),\E_-^s(\ell,0);[\ell-\e,\ell] )  = 1, 
	\end{equation}
	and from \cref{lemma:L+_topP} and \cref{define:Maslov_GPP} we have
	\begin{equation}\label{22}
		\Mas(\E_-^u(\ell,\cdot),\E_-^s(\ell,\cdot);[0,\e]) = -1.
	\end{equation}
	The contributions \eqref{eq:box3} and \eqref{22} coming from the corner crossing $(x,\la)=(\ell,0)$ thus cancel each other out. Applying the same homotopy argument as we did for $L_+$ yields \eqref{eq:Q=conj_points}.
\end{proof}
\begin{rem}\label{rem:general_NLS_powerlaw2}
	As per \cref{rem:general_NLS_powerlaw}, monotonicity of the Lagrangian paths $x\mapsto \E^u_\pm(x,0)$ with respect to the stable subspace $\Ss_\pm(0)$ is preserved in the case of \eqref{Lpm_powerlaw} for any $p\in\N$. Since all other results in the current section remain valid for that case, the proof presented above applies without modification for soliton solutions to \eqref{general_NLS_powerlaw}.
\end{rem}

\section{Proving the lower bound and the Vakhitov-Kolokolov criterion}\label{sec:proof_lower_bound}

We now return to the computation of the Maslov indices appearing on the left hand side of \eqref{mas_box_argument}. After computing each, we provide the proofs of \cref{thm:VK_criterion,thm:main_lower_bound}. We begin with $\Gamma_1$.

\begin{lemma}\label{lemma_P-Q}
	$\Mas(\E^u(\cdot,0), \E^s(\ell,0); [-\infty,\ell-\e]) = Q-P$, where $\e>0$ is small.
\end{lemma}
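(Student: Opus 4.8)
The plan is to exploit the decoupling of the first order system \eqref{1st_order_sys} at $\la = 0$, together with the symplectic additivity property of the Maslov index, in order to reduce the computation to the two separate $L_+$ and $L_-$ calculations already carried out in \cref{sec:L+L__counts}. The first observation is that along $\Gamma_1$ the spectral parameter is fixed at $\la = 0$, so the second entry $\E^s(\ell, 0)$ of the Lagrangian pair in \eqref{eq:Lagpath} is a \emph{fixed} reference plane $V \coloneqq \E^s(\ell,0) \in \cL(4)$. Consequently the Maslov index of the pair reduces to that of the single path $x \mapsto \E^u(x,0)$ with respect to $V$, as discussed after \eqref{eq:Maslov_box_contours}.

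Next I would invoke the decoupling recorded in \eqref{direct_sums}: since \eqref{1st_order_sys} splits at $\la=0$ into independent systems for the $u$- and $v$-variables, one has $\E^u(x,0) = \E_+^u(x,0) \oplus \E_-^u(x,0)$, and the same reasoning applied to solutions decaying at $+\infty$ gives $V = \E^s(\ell,0) = \E_+^s(\ell,0) \oplus \E_-^s(\ell,0)$. To apply property (3) of \cref{prop:enjoys} I must check that this is a genuine \emph{symplectic} direct sum, i.e. that the subspace spanned by the $u$-coordinates and the subspace spanned by the $v$-coordinates are symplectically orthogonal in $(\R^8, \w)$. This is a short direct computation from the ordering of coordinates in \eqref{subs}: a $u$-type vector has nonzero entries only in the odd slots and a $v$-type vector only in the even slots, and one verifies that $\w(\cdot,\cdot)$ vanishes between them, while its restriction to each factor is the nondegenerate form of the respective four-dimensional subsystem. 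Symplectic additivity then yields
\begin{multline*}
	\Mas(\E^u(\cdot,0), \E^s(\ell,0); [-\infty,\ell-\e]) = \Mas(\E_+^u(\cdot,0), \E_+^s(\ell,0); [-\infty,\ell-\e]) \\ + \Mas(\E_-^u(\cdot,0), \E_-^s(\ell,0); [-\infty,\ell-\e]).
\end{multline*}

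Finally I would evaluate each summand using the machinery of \cref{sec:L+L__counts}. By \cref{lemma:L+_half_left_full_left} (and its $L_-$ analogue) the reference plane $\E_\pm^s(\ell,0)$ may be interchanged with $\Ss_\pm(0)$ over the extended interval, so that the two summands equal $\Mas(\E_+^u(\cdot,0), \Ss_+(0); [-\infty,\infty))$ and $\Mas(\E_-^u(\cdot,0), \Ss_-(0); [-\infty,\infty))$ respectively. By the monotonicity established in \cref{lemma:L+_Mas_left_conjpoints} these equal $-p_c$ and $+q_c$, and by \cref{thm:Lpm_eval_counts} we have $p_c = P$ and $q_c = Q$. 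Adding gives $-P + Q = Q - P$, as claimed. I expect the only genuinely delicate point to be the verification that the $u$/$v$ splitting is symplectically orthogonal, so that property (3) of \cref{prop:enjoys} genuinely applies; everything else is a bookkeeping assembly of results already proved. It may also be worth remarking explicitly that the decomposition of the stable bundle $\E^s(\ell,0)$, and not merely of $\Ss(0)$ and $\E^u(x,0)$ as stated in \eqref{direct_sums}, follows from the same decoupling of \eqref{1st_order_sys} at $\la = 0$.
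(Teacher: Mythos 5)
Your proposal is correct and follows essentially the same route as the paper: decouple \eqref{1st_order_sys} at $\la=0$ to write $\E^u(x,0)=\E_+^u(x,0)\oplus\E_-^u(x,0)$ and $\E^s(\ell,0)=\E_+^s(\ell,0)\oplus\E_-^s(\ell,0)$, apply symplectic additivity (property (3) of \cref{prop:enjoys}), and then evaluate the two summands as $-p_c=-P$ and $+q_c=+Q$ via \cref{lemma:L+_Mas_left_conjpoints,lemma:L+_half_left_full_left} and \cref{thm:Lpm_eval_counts}. Your explicit check that the $u$- and $v$-coordinate subspaces are symplectically orthogonal (so that the direct sum is genuinely a symplectic splitting) is a detail the paper leaves implicit, and it is verified correctly.
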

\begin{proof}
	{ Recalling the arguments in \cref{subsec:symplectic_evals} regarding the decoupling of \eqref{1st_order_sys} when $\la=0$, we have 
	\begin{equation*}\label{}
		\E^u(x,0) = \E_+^u(x,0) \oplus \E_-^u(x,0), \qquad \E^s(\ell,0) = \E_+^s(\ell,0) \oplus \E_-^s(\ell,0).
	\end{equation*}
}
	Now using property (3) of \cref{prop:enjoys}, we have
	\begin{align*}%\label{mas_direct_sum}
		\begin{split}
			\Mas(\E^u(\cdot,0), \E^s(\ell,0) ; [-\infty,\ell-\e]) &= \Mas(\E_+^u(\cdot,0), \E_+^s(\ell,0) ; [-\infty,\ell-\e]) \\
			& \qquad  + \Mas(\E_-^u(\cdot,0), \E_-^s(\ell,0); [-\infty,\ell-\e]),
		\end{split}
	\end{align*}
	and the result follows combining \eqref{left_conj_points_L+} (and the accompanying statement for $L_-$) with \eqref{eq:P=conj_points} and \eqref{eq:Q=conj_points}.
\end{proof}
Next, similar to \cref{lemma:L+_no_crossings} we show that there are no crossings along $\Gamma_3$ and $\Gamma_4$.
\begin{lemma}\label{lemma:Mas_zero_bottom_right}
	We have $\E^u(x,\la_\infty) \cap \E^s(\ell,\la_\infty)= \{0\}$ for all $x\in\R$ provided $\la_\infty>0$ and $\ell>0$ are large enough. In addition, $\U(\la)\cap\E^s(\ell,\la)=\{0\}$ for all $\la \geq 0$ provided $\ell>0$ is large enough. Therefore 
	\[
	\Mas(\E^u(\cdot,\la_\infty), \E^s(\ell,\la_\infty); [-\infty,\ell]) = \Mas( \U(\cdot), \E^s(\ell,\cdot);[0,\la_\infty])=0.
	\]
\end{lemma}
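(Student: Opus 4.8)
The plan is to prove this exactly as its four-dimensional counterpart \cref{lemma:L+_no_crossings}, following \cite{Corn19} and \cite{AGJ90}, now for the full eight-dimensional system \eqref{1st_order_sys}. The second assertion is immediate: since $A_\infty(\la)$ is hyperbolic for every $\la\in\R$ (see \eqref{A_asymp_evals}), we have $\U(\la)\cap\Ss(\la)=\{0\}$, and because $\E^s(x,\la)\to\Ss(\la)$ as $x\to+\infty$ uniformly for $\la$ in the compact interval $[0,\la_\infty]$, taking $\ell$ large forces $\U(\la)\cap\E^s(\ell,\la)=\{0\}$ for all such $\la$; the Maslov index along $\Gamma_4$ then vanishes. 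The work is concentrated in the first assertion (the $\Gamma_3$ edge).

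First I would record that the real eigenvalues of $N$ are bounded. If $\la\neq0$ is real and $N(u,v)^\top=\la(u,v)^\top$, then $L_+u=\la v$ and $-L_-v=\la u$, so $v=\la^{-1}L_+u$ and $-L_-L_+u=\la^2u$. Writing $L_\pm=\cA+\cV_\pm$ with $\cA=-\p_x^4-\sigma_2\p_x^2$ and $\cV_\pm$ bounded and symmetric (as in \cref{lemma:L+_no_crossings}), selfadjointness of $L_-$ gives
\[
\la^2\|u\|^2=\langle -L_-L_+u,u\rangle=-\|\cA u\|^2-\langle \cA u,(\cV_++\cV_-)u\rangle-\langle \cV_+u,\cV_-u\rangle .
\]
The leading term is nonpositive, and Young's inequality absorbs the cross term into $\tfrac12\|\cA u\|^2$, leaving $\la^2\|u\|^2\le C\|u\|^2$ for a constant $C$ depending only on $\|\cV_\pm\|_\infty$. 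Hence $|\la|\le\sqrt C$, so any $\la_\infty>\sqrt C$ satisfies $\la_\infty\notin\spec(N)$; by the analogue of \cite[Lemma 3.7]{AGJ90} this guarantees $\lim_{x\to+\infty}\E^u(x,\la_\infty)=\U(\la_\infty)$, so that the trajectory $x\mapsto\E^u(x,\la_\infty)$ has compact closure in $\cL(4)$.

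Next I would run the rescaling and trapping-region argument. Set $y=\la^{1/4}x$ and apply the diagonal scaling $M=\diag(1,1,\la^{1/2},\la^{1/2},\la^{1/4},\la^{1/4},\la^{-1/4},\la^{-1/4})$, the symmetric extension of \eqref{eq:change_variables} to both the $u$- and $v$-blocks, so that the transformed coefficient matrix $\tilde A(y,\la)=\la^{-1/4}MA(\la^{-1/4}y;\la)M^{-1}$ converges as $\la\to+\infty$ to a constant matrix $A_\infty^{(0)}$. A short computation shows the reduced flow obeys $\tilde u_2^{(4)}=\tilde v_2$ and $\tilde v_2^{(4)}=-\tilde u_2$, hence $\tilde u_2^{(8)}=-\tilde u_2$, with characteristic equation $\mu^8=-1$. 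This has no purely imaginary roots, so $A_\infty^{(0)}$ is hyperbolic with disjoint four-dimensional subspaces $\widetilde\U_\infty,\widetilde\Ss_\infty$, and $\widetilde\U_\infty$ is an attracting fixed point of the induced flow on $\cL(4)$. Exactly as in \cref{lemma:L+_no_crossings}, for $\la$ large $\tilde A(\cdot,\la)$ is a small nonautonomous perturbation of $A_\infty^{(0)}$, so one fixes a trapping region $\mathcal R\ni\widetilde\U(\la)$ small enough to be disjoint from $\widetilde\Ss(\la)$ uniformly in $\la$. Since the rescaling is a linear isomorphism, $\E^u(x,\la_\infty)\cap\Ss(\la_\infty)=\{0\}$ is equivalent to $\widetilde\E^u(\la_\infty^{1/4}x,\la_\infty)\cap\widetilde\Ss(\la_\infty)=\{0\}$; the rescaled unstable bundle starts at $\widetilde\U(\la_\infty)\in\mathcal R$, remains in $\mathcal R$, and (by $\la_\infty\notin\spec(N)$) returns to $\widetilde\U(\la_\infty)$, so it never meets $\widetilde\Ss(\la_\infty)$. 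Undoing the scaling yields $\E^u(x,\la_\infty)\cap\Ss(\la_\infty)=\{0\}$ for all $x\in\R$. Finally, using $\E^s(\ell,\la_\infty)\to\Ss(\la_\infty)$ as $\ell\to\infty$, openness of transversality, and the compactness of the trajectory closure secured above, taking $\ell$ large gives $\E^u(x,\la_\infty)\cap\E^s(\ell,\la_\infty)=\{0\}$ for all $x$, so the Maslov index along $\Gamma_3$ vanishes.

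The main obstacle is setting up the correct eight-dimensional rescaling and checking hyperbolicity of the limiting system. In contrast to the $L_\pm$ case, the spectral parameter enters \eqref{1st_order_sys} as a coupling between the $u$- and $v$-equations, and this coupling \emph{survives} the rescaling --- producing the entangled reduced equation $\tilde u_2^{(8)}=-\tilde u_2$ rather than two decoupled fourth-order problems --- while $A_\infty(\la)$ itself has genuinely non-real eigenvalues (cf. \eqref{A_asymp_evals}). The saving fact is that $\mu^8=-1$ still has no imaginary root, so hyperbolicity and the attracting/disjointness structure persist; the remaining verifications (that $\widetilde\U_\infty$ attracts within the Lagrangian Grassmannian and that $\mathcal R$ can be chosen disjoint from $\widetilde\Ss(\la)$ uniformly in large $\la$) are routine and are carried out exactly as in \cref{lemma:L+_no_crossings}.
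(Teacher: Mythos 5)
Your proposal is correct and follows essentially the same route as the paper: a spectral bound on $N$ to guarantee $\la_\infty\notin\spec(N)$, followed by the rescaling $y=\la^{1/4}x$ with the diagonal scaling applied to both the $u$- and $v$-blocks, hyperbolicity of the limiting constant-coefficient system, and the trapping-region argument inherited from \cref{lemma:L+_no_crossings}. Your identification of the limit system with $\tilde u_2^{(4)}=\tilde v_2$, $\tilde v_2^{(4)}=-\tilde u_2$, hence $\mu^8=-1$ with no purely imaginary roots, is exactly the transversality of the stable and unstable subspaces of \eqref{1st_order_sys_N_rescaled_lam_limit} that the paper invokes. The one place you diverge is the spectral bound: the paper controls $|\operatorname{Re}\la|$ for \emph{all} of $\spec(N)$ by writing $iN$ as a bounded symmetric perturbation of a selfadjoint operator and citing Kato, whereas you bound only the real eigenvalues via the quadratic-form identity for $-L_-L_+$ and Young's inequality. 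Your version is more elementary and suffices here because $\la_\infty$ is real (so it cannot collide with a non-real eigenvalue, and the essential spectrum is purely imaginary), though it yields slightly less information than the paper's vertical-strip bound.
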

\begin{proof}
	For the first assertion, note that $N$ is a bounded perturbation of a skew-selfadjoint operator, so that its spectrum lies in a vertical strip around the imaginary axis in the complex plane. More precisely, we have that 
	\begin{equation*}\label{}
		iN = \widetilde{\cA}+\widetilde{\cV}, \quad \widetilde{\cA}= i  \begin{pmatrix}
			0 & \p_{xxxx}+\sigma_2\p_{xx} \\
			-\p_{xxxx} - \sigma_2 \p_{xx} & 0
		\end{pmatrix}, \quad \widetilde{\cV}=i\begin{pmatrix}
			0 & \be-\phi^2 \\ -\be+3\phi^2 & 0
		\end{pmatrix},
	\end{equation*}
	where, with $\dom(\widetilde{\cA})= \dom(N)$, $\widetilde{\cA}^*= \widetilde{\cA}$ is selfadjoint in $L^2(\R)$ and $\widetilde{\cV}$ is bounded. Now using \cite[Remark 3.2, p.208]{Kato} and \cite[eq. (3.16), p.272]{Kato}, we conclude that 
	\begin{equation*}\label{}
		\zeta \in \spec(\widetilde{\cA}+\widetilde{\cV}) \im | \text{Im}(\zeta)| \leq \| \widetilde{\cV} \|.
	\end{equation*}
	By the spectral mapping theorem, $\spec(iN) = i\spec(N)$. It follows that
	\begin{equation*}\label{}
		\la \in \spec(N) \im | \text{Re}(\la)| \leq \| \widetilde{\cV} \|.
	\end{equation*}
	Thus, for all $\la>\|\widetilde{\cV}\|$ we have $\E^u(\ell,\la)\cap\E^s(\ell,\la)=\{0\}$. 
	
	The proof now follows from the same arguments used to prove the first assertion in \cref{lemma:L+_no_crossings}. Namely, via the change of variables \eqref{eq:change_variables} along with
	\begin{equation*}\label{}
		\widetilde{v}_1=v_1, \quad\widetilde{v}_2= \lambda^{1/2}v_2,   \quad\widetilde{v}_3= \lambda^{1/4}v_3,  \quad\widetilde{v}_4= \lambda^{-1/4}v_4,
	\end{equation*}
	we can rewrite \eqref{1st_order_sys} as
	\begin{equation}\label{1st_order_sys_N_rescaled_lam}
		\de{}{y}\begin{pmatrix}
			\widetilde{u}_1 \\ \widetilde{v}_1 \\ \widetilde{u}_2 \\ \widetilde{v}_2 \\ \widetilde{u}_3 \\ \widetilde{v}_3 \\ \widetilde{u}_4 \\ \widetilde{v}_4 \\
		\end{pmatrix} = 
		\left(\begin{array}{@{}c|c@{}}
			0 &  \begin{matrix}
				\f{\sigma_2}{\sqrt{\la}}  & 0 & 1 & 0 \\
				0 & -\f{\sigma_2}{\sqrt{\la}}  & 0 & 1 \\
				1 & 0 & 0 & 0 \\
				0 & 1 & 0 & 0 
			\end{matrix}  \\
			\hline \\[-4mm]
			\begin{matrix}
				1 & 0 & -\f{\sigma_2}{\sqrt{\la}}   & 0  \\ 
				0 & -1 & 0  & -\f{\sigma_2}{\sqrt{\la}}    \\ 
				-\f{\sigma_2}{\sqrt{\la}}   & 0 & \f{\al(x)}{\la}  & 1  \\
				0 & -\f{\sigma_2}{\sqrt{\la}}  & 1    & \f{\eta(x)}{\la}
			\end{matrix}  & 0 
		\end{array}\right)
		\begin{pmatrix}
			\widetilde{u}_1 \\ \widetilde{v}_1 \\ \widetilde{u}_2 \\ \widetilde{v}_2 \\ \widetilde{u}_3 \\ \widetilde{v}_3 \\ \widetilde{u}_4 \\ \widetilde{v}_4 \\
		\end{pmatrix}.
	\end{equation}
	Again, the flow of the associated asymptotic system is close to that of \eqref{1st_order_sys_N_rescaled_lam} for large $\la$. From the transversality of the four dimensional stable and unstable subspaces of the limiting system of \eqref{1st_order_sys_N_rescaled_lam} as $\la\to\infty$, i.e.
	\begin{equation}\label{1st_order_sys_N_rescaled_lam_limit}
		\de{}{y}\begin{pmatrix}
			\widetilde{u}_1 \\ \widetilde{v}_1 \\ \widetilde{u}_2 \\ \widetilde{v}_2 \\ \widetilde{u}_3 \\ \widetilde{v}_3 \\ \widetilde{u}_4 \\ \widetilde{v}_4 \\
		\end{pmatrix} = 
		\left(\begin{array}{@{}c|c@{}}
			0 &  \begin{matrix}
				0  & 0 & 1 & 0 \\
				0 &0  & 0 & 1 \\
				1 & 0 & 0 & 0 \\
				0 & 1 & 0 & 0 
			\end{matrix}  \\
			\hline \\[-4mm]
			\begin{matrix}
				1 & 0 & 0   & 0  \\ 
				0 & -1 & 0  & 0   \\ 
				0   & 0 & 0  & 1  \\
				0 & 0 & 1    & 0
			\end{matrix}  & 0 
		\end{array}\right)
		\begin{pmatrix}
			\widetilde{u}_1 \\ \widetilde{v}_1 \\ \widetilde{u}_2 \\ \widetilde{v}_2 \\ \widetilde{u}_3 \\ \widetilde{v}_3 \\ \widetilde{u}_4 \\ \widetilde{v}_4 \\
		\end{pmatrix},
	\end{equation}
	one can show that there exists a $\la_\infty>\|\widetilde{\cV}\|$ such that $\E^u(x,\la)\cap\Ss(\la)=\{0\}$ for all $x\in\R$ and all $\la\geq \la_\infty$. Hence there exists $\ell_\infty\gg 1$ such that $\E^u(x,\la_\infty)\cap \E^s(\ell,\la_\infty)=\{0\}$ for all $x\in\R$ and $\ell\geq \ell_\infty$. The second assertion follows from the same arguments used to prove the second assertion in \cref{lemma:L+_no_crossings}. 
\end{proof}	
For the proof of \cref{thm:main_lower_bound}, it remains to compute the contribution to the Maslov index from the corner crossing $(x,\la)=(\ell,0)$, i.e.
\begin{equation}\label{c_again}
	\mathfrak{c} = \Mas(\E^u(\cdot,0),\E^s(\ell,0);[\ell-\e,\ell] ) + \Mas(\E^u(\ell,\cdot),\E^s(\ell,\cdot);[0,\e] ).
\end{equation}
{ \begin{lemma}\label{lemma:value_of_c}
	The value of $\mathfrak{c}$ is given by
	\begin{equation}\label{c_value}
		\mathfrak{c} =
		\begin{cases*}
			1 & \,$	\mathcal{I}_1>0, \, \mathcal{I}_2<0$, \\
			0 & \,$\mathcal{I}_1\mathcal{I}_2>0$, \\
			-1  & \,$\mathcal{I}_1<0, \,\mathcal{I}_2>0$.
		\end{cases*}
	\end{equation}
\end{lemma}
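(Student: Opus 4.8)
The plan is to localise the computation at the corner crossing $(x,\la)=(\ell,0)$, whose intersection space is the two–dimensional plane $W_1=\E^u(\ell,0)\cap\E^s(\ell,0)=\spn\{\pmb{\phi}(\ell),\pmb{\varphi}(\ell)\}$ recorded in \eqref{intersection_at_zero_phis}, and to treat its two constituent contributions
\begin{equation*}
	\mathfrak{c} = \underbrace{\Mas(\E^u(\cdot,0),\E^s(\ell,0);[\ell-\e,\ell])}_{\Gamma_1} \;+\; \underbrace{\Mas(\E^u(\ell,\cdot),\E^s(\ell,\cdot);[0,\e])}_{\Gamma_2}
\end{equation*}
separately. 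I would handle the spatial (arrival) piece along $\Gamma_1$ first. Since \eqref{1st_order_sys} decouples at $\la=0$, both the path $x\mapsto\E^u(x,0)$ and the reference $\E^s(\ell,0)$ split along the $u$– and $v$–sectors, so by symplectic additivity (property (3) of \cref{prop:enjoys}) the arrival contribution is the sum of the $L_+$ and $L_-$ corner contributions already isolated in the proof of \cref{thm:Lpm_eval_counts}. By \eqref{eq:box2} the $L_+$ corner contributes $0$, while by \eqref{eq:box3} the $L_-$ corner contributes $+1$; hence the $\Gamma_1$ piece equals $1$, independently of $\mathcal{I}_1,\mathcal{I}_2$.

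The spectral (departure) piece along $\Gamma_2$ is the crux, and it is here that $\mathcal{I}_1,\mathcal{I}_2$ enter. I would first confirm that the first–order relative crossing form \eqref{rel_cross_form} vanishes identically on $W_1$: reading off $\p_\la A(x;\la)$ and integrating exactly as in \eqref{eq:omega_calc} shows the integrand is proportional to $u_2 v_2$, which vanishes on $\pmb{\phi}(\ell)$ (where $u\equiv 0$) and on $\pmb{\varphi}(\ell)$ (where $v\equiv 0$), while the cross term collapses to $\int_{\R}\phi\phi'\,dx=0$. Thus $W_2=W_1$, and by the initial–point formula \eqref{second_order_initial} the departure contributes $-n_-(\mathfrak{m}^{(2)}_0)$. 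To evaluate $\mathfrak{m}^{(2)}_0$ I would build order–two root function pairs by expanding decaying solutions of $N(u,v)^\top=\la(u,v)^\top$ in powers of $\la$; this exhibits the Jordan structure $(0,\phi)\leftarrow(\widehat u,0)$ (with $L_+\widehat u=\phi$) for $w_0=\pmb{\phi}(\ell)$ and $(\phi',0)\leftarrow(0,\widehat v)$ (with $-L_-\widehat v=\phi'$) for $w_0=\pmb{\varphi}(\ell)$. Substituting these into \eqref{rel_cross_form} and integrating by parts to second order (using decay at $-\infty$ on the unstable side and at $+\infty$ on the stable side, as in \eqref{eq:omega_calc}) should collapse the form to the solvability integrals, yielding a diagonal form $\mathfrak{m}^{(2)}_0=\diag(c_1\mathcal{I}_1,\,c_2\mathcal{I}_2)$ in the basis $\{\pmb{\varphi}(\ell),\pmb{\phi}(\ell)\}$, with the off–diagonal entries expected to vanish because the two chains occupy complementary sectors. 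The hypothesis $\mathcal{I}_1,\mathcal{I}_2\neq0$ is precisely what forces both chains to have length exactly two, so $\mathfrak{m}^{(2)}_0$ is nondegenerate and \eqref{second_order_initial} indeed applies.

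Combining the two pieces gives $\mathfrak{c}=1-n_-(\mathfrak{m}^{(2)}_0)$, so the table \eqref{c_value} follows once the signs of $c_1,c_2$ are known. I expect $c_1>0$ and $c_2<0$, in which case $n_-(\mathfrak{m}^{(2)}_0)$ equals $0,1,2$ exactly in the three regimes $\{\mathcal{I}_1>0,\mathcal{I}_2<0\}$, $\{\mathcal{I}_1\mathcal{I}_2>0\}$, $\{\mathcal{I}_1<0,\mathcal{I}_2>0\}$, reproducing \eqref{c_value}. Pinning down these two signs is the main obstacle, since they require careful bookkeeping of the normalisations in the root–function expansion and of the boundary terms generated by the integrations by parts. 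As a consistency check I would note that when $Q=0$ one has $\spec(L_-)\subset(-\infty,0]$, so $\mathcal{I}_1=\int_\R\phi'\widehat v\,dx=-\langle L_-\widehat v,\widehat v\rangle>0$; the resulting instability/stability dichotomy then reads off correctly from the lower bound and matches \cref{thm:VK_criterion}, which lends confidence to the sign assignment $c_1>0$, $c_2<0$.
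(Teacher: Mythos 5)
Your proposal follows essentially the same route as the paper: the arrival along $\Gamma_1$ contributes $0+1=1$ by symplectic additivity together with \eqref{eq:box2}--\eqref{eq:box3}, the first-order relative form vanishes identically on the two-dimensional corner intersection so that $W_2=W_1$, and the departure along $\Gamma_2$ equals $-n_-(\mathfrak{m}^{(2)}_{0})$ with the second-order form evaluated through the Jordan-chain solutions of $L_+\widehat u=\phi$ and $-L_-\widehat v=\phi_x$. The single step you defer --- the signs of the diagonal coefficients --- is exactly what the paper's explicit computation \eqref{2nd_form_final} supplies, giving $\mathfrak{m}^{(2)}_{0}=2\mathcal{I}_1k_1^2-2\mathcal{I}_2k_2^2$, i.e. $c_1=2>0$ and $c_2=-2<0$ as you anticipated, so the table \eqref{c_value} follows.
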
}
\begin{proof}
For the first term in \eqref{c_again}, i.e. the arrival along $\Gamma_1$, again using property (3) of \cref{prop:enjoys} and equations \eqref{eq:box2} and \eqref{eq:box3}, we have
\begin{align}\label{eq:mathfrak_b}
	\begin{split}
		\Mas(\E^u(\cdot,0), \E^s(\ell,0) ; [\ell-\e,\ell]) &= \Mas(\E_+^u(\cdot,0), \E_+^s(\ell,0) ; [\ell-\e,\ell]) \\
		& \qquad  + \Mas(\E_-^u(\cdot,0), \E_-^s(\ell,0); [\ell-\e,\ell]), \\
		&=1.
	\end{split}
\end{align}
For the second term in \eqref{c_again}, i.e. the departure along $\Gamma_2$, we compute crossing forms. To that end, suppose $\la_0\in[0,\la_\infty]$ (not necessarily zero) is a crossing of the Lagrangian pair $\la\mapsto \left (\E^u(\ell,\la), \E^s(\ell,\la)\right )$. The first-order relative crossing form \eqref{rel_cross_form} is given by
\begin{equation}\label{rel_crossform_lambda}
	\mathfrak{m}_{\la_0}(\E^u(\ell,\cdot),\E^s(\ell,\cdot))(w_0) = \de{}{\la} \w (r(\la), w_0) \big|_{\la=\la_0} - \de{}{\la} \w (s(\la), w_0) \big|_{\la=\la_0},
\end{equation}
where $w_0\in W_1(\E^u(\ell,\cdot),\E^s(\ell,\cdot),\la_0) = \E^u(\ell,\la_0) \cap \E^s(\ell,\la_0)$ and $(r,s)$ is a root function pair for $(\E^u(\ell,\cdot),\E^s(\ell,\cdot))$ with $r(\la_0)=s(\la_0)=w_0$. As in the proof of \cref{lemma:L+_topP}, we compute each of these terms separately. For the first, noting that $r(\la)\in \E^u(\ell,\la)$ for $\la\in[\la_0-\e,\la_0+\e]$, it follows from the definition of $\E^u(\ell,\la)$ that there exists a one-parameter family of solutions $\la\mapsto\mathbf{w}(\cdot;\la)$ to \eqref{1st_order_sys}, such that $\mathbf{w}(x;\la)\to 0$ as $x\to -\infty$, $\mathbf{w}(\ell;\la)=r(\la)$ and $\mathbf{w}(\ell;\la_0)=w_0$. Thus
\begin{align*}
	\de{}{\la} \w (r(\la), w_0) \big|_{\la=\la_0}&= \w \Big(\de{}{\la}\mathbf{w}(\ell,\la),\mathbf{w}(\ell,\la)\Big )\big|_{\la=\la_0},
\end{align*}
and a calculation similar to \eqref{eq:omega_calc} with
\begin{equation}\label{eq:A_lambda_N}
	\p_\la A (x;\la) = \left(
	\begin{array}{cccccccc}
		0_4 & 0_4 \\
		A_{21} & 0_4 
	\end{array}
	\right), \qquad A_{21}= \begin{pmatrix}
		0 & 0 \\ 0 & 1
	\end{pmatrix} \otimes \begin{pmatrix}
		0 & 1 \\ 1 & 0
	\end{pmatrix},
\end{equation}
and $\mathbf{w}=(u_1, v_2, u_2,v_2, u_3, v_3, u_4, v_4)^\top$ yields
\begin{align*}
	\de{}{\la} \w (r(\la), w_0) \big|_{\la=\la_0} = -2 \int_{-\infty}^\ell u_2(x;\la_0) v_2(x;\la_0)\,dx.
\end{align*}
For the second term in \eqref{rel_crossform_lambda} and the same fixed $w_0$, associated with $s(\la)\in \E^s(\ell,\la)$ is a family of solutions $\la\mapsto \widetilde{\mathbf{w}}(\cdot;\la)$ to \eqref{1st_order_sys} such that $\widetilde{\mathbf{w}}(x;\la)\to 0$ as $x\to+\infty$, $\widetilde{\mathbf{w}}(\ell;\la)=s(\la)$ and $\widetilde{\mathbf{w}}(\ell;\la_0)=w_0$. Arguing as for the first term of \eqref{rel_crossform_lambda}, but now using the decay at $+\infty$, we have
\begin{equation*}\label{}
	\de{}{\la} \w (s(\la), w_0) \big|_{\la=\la_0} = \w\Big(\de{}{\la} \widetilde{\mathbf{w}}(\ell;\la),\widetilde{\mathbf{w}}(\ell;\la) \Big)\Big|_{\la=\la_0} = 2 \int_{\ell}^\infty \widetilde{u}_2(x;\la_0) \widetilde{v}_2(x;\la_0)\,dx.
\end{equation*}
Using uniqueness of solutions as in the proof of \cref{lemma:L+_topP}, we conclude
\begin{equation}\label{eq:final_1st_la_form}
	\mathfrak{m}_{\la_0}(\E^u(\ell,\cdot),\E^s(\ell,\cdot))(w_0) = -2 \int_{-\infty}^\infty u_2(x;\la_0) v_2(x;\la_0) \,dx.
\end{equation}
\begin{rem}\label{not_sign_definite}
	The form \eqref{eq:final_1st_la_form} is \emph{not} sign definite, and therefore the Maslov index does not afford an exact count of the crossings of the path $\la\mapsto \left (\E^u(\ell,\cdot),\E^s(\ell,\cdot)\right )$ for $\la\in[0,\la_\infty]$. This will be the reason for the inequality (and not equality) \eqref{lower_bound} in  \cref{thm:main_lower_bound}.
\end{rem}
We now evaluate the form \eqref{eq:final_1st_la_form} at $\la_0=0$, where $W_1((\E^u(\ell,\cdot),\E^s(\ell,\cdot),0) = \E^u(\ell,0) \cap \E^s(\ell,0)$. Recalling \eqref{intersection_at_zero_phis}, we have $\E^u(\ell,0) \cap \E^s(\ell,0) = \spn\{\pmb{\phi}(\ell), \pmb{\varphi}(\ell)\}$, where $\pmb{\phi}$ and $\pmb{\varphi}$ are given in \eqref{eq:phi_varphi}. Hence, it suffices to evaluate \eqref{eq:final_1st_la_form} on $w_0 = \pmb{\phi}(\ell) k_1 + \pmb{\varphi}(\ell)k_2$ for some $k_1,k_2\in\R$. Evidently, the family $\mathbf{w}(\cdot;0)=\widetilde{\mathbf{w}}(\cdot;0)$ described above is given by $\mathbf{w}(x;0) = \pmb{\phi}(x) k_1 + \pmb{\varphi}(x)k_2$, so that $u_2(x;0) = \phi'(x)k_1$ and $v_2(x;0) = -\phi(x)k_2$. Hence
\begin{align*}%\label{eq:first_la_degenerate_N}
	\mathfrak{m}_{\la_0}(\E^u(\ell,\cdot),\E^s(\ell,\cdot))(w_0) &=  	2 \int_{-\infty}^{\infty} (\phi'k_1) (\phi k_2) \,dx  = 	\left (\int_{-\infty}^{\infty} \de{}{x}\phi^2 \,dx\right ) \,k_1k_2 = 0,
\end{align*}
since $\phi\in H^4(\R)$. That is, the relative crossing form $\mathfrak{m}_{\la_0}$ in \eqref{rel_crossform_lambda} is identically zero at $\la_0=0$, and the conjugate point $(\ell,0)$ is non-regular in the $\la$ direction. Therefore, $W_2(\E^u(\ell,\cdot),\E^s(\ell,\cdot),0) = \ker \mathfrak{m}_{\la_0}(\E^u(\ell,\cdot),\E^s(\ell,\cdot)) =  \E^u(\ell,0) \cap \E^s(\ell,0)$, and we need to compute higher order crossing forms.

To that end, in this case the second-order relative crossing form \eqref{rel_cross_form} at $\la_0=0$ is given by
\begin{equation}\label{relative_crossform_N}
	\mathfrak{m}^{(2)}_{\la_0}(\E^u(\ell,\cdot),\E^s(\ell,\cdot))(w_0) =  \des{}{\la} \w (r(\la), w_0) \big|_{\la=0} -  \des{}{\la} \w (s(\la), w_0) \big|_{\la=0},
\end{equation}
where $w_0\in \E^u(\ell,0) \cap \E^s(\ell,0)$ and $(r,s)$ is a root function pair for $(\E^u(\ell,\cdot),\E^s(\ell,\cdot))$ with $r(0)=s(0)=w_0$ and $\dot r(0) = \dot s(0)$. (Dot denotes $d/d\la$.) For the first term of \eqref{relative_crossform_N}, we again have a one-parameter family $\la\to\mathbf{w}(\cdot;\la)$ decaying to zero as $x\to-\infty$ such that $\mathbf{w}(\ell;\la)=r(\la)$ and $\mathbf{w}(\ell;0)=w_0$. Now 
\begin{align*}
	\begin{split}
		\w \Big(&\des{}{\la}\mathbf{w}(\ell,\la),\mathbf{w}(\ell,\la)\Big ) = \int_{-\infty}^\ell \p_x\, \w(\p_{\la\la} \mathbf{w}(x;\la), \mathbf{w}(x;\la)) dx, \\
		&= \int_{-\infty}^\ell \w\left (\p_{\la\la} \left [A(x;\la)\mathbf{w}(x;\la)\right ],\mathbf{w}(x;\la) \right ) + \w\left (\p_{\la\la} \mathbf{w}(x;\la) , A(x;\la)\mathbf{w}(x;\la)\right )  dx , \\
		\begin{split}
			&= \int_{-\infty}^\ell \w(A_{\la\la}(x;\la)\mathbf{w}(x;\la),\mathbf{w}(x;\la) )  + 2\,\w(A_\la(x;\la)\p_\la\mathbf{w}(x;\la),\mathbf{w}(x;\la) ) \\ 
			&\qquad \qquad    +\w(A(x;\la)\p_{\la\la}\mathbf{w}(x;\la), \mathbf{w}(x;\la))  +\w\left (\p_{\la\la} \mathbf{w}(x;\la) ,A(x;\la)\mathbf{w}(x;\la)\right)dx,
		\end{split}\\
		\begin{split}
			&= \int_{-\infty}^\ell  \langle [A(x;\la)^\top J+ JA(x;\la)]\p_{\la\la}\mathbf{w}(x;\la), \mathbf{w}(x;\la) \rangle \\ &\qquad \qquad \qquad\qquad\qquad  + 2\,\w(A_\la(x;\la)\p_\la\mathbf{w}(x;\la),\mathbf{w}(x;\la) ) \,dx,
		\end{split}\\
		&= 2\int_{-\infty}^\ell \w(A_\la(x;\la)\p_\la\mathbf{w}(x;\la),\mathbf{w}(x;\la) ) dx,
	\end{split}
\end{align*}
where we used \eqref{infsymp} and $A_{\la\la}(x;\la)=0$. Using \eqref{eq:A_lambda_N} and evaluating at  $\la=0$, we find
\begin{align}
	\des{}{\la} \w (r(\la), w_0)  &= \w \Big(\des{}{\la}\mathbf{w}(\ell,\la),\mathbf{w}(\ell,\la)\Big )\Big|_{\la=0}, \nonumber\\
	&= -2\int_{-\infty}^\ell 
	u_2(x;0) \p_\la  v_2(x;0) +  v_2(x;0) \p_\la u_2(x;0)\,dx. \label{first_2nd_order_form_term}
\end{align}
For the second term in \eqref{relative_crossform_N}, we have a family $\la\to\widetilde{\mathbf{w}}(\cdot;\la)$ decaying to zero as $x\to+\infty$ such that $\widetilde{\mathbf{w}}(\ell;\la)=s(\la)$ and $\widetilde{\mathbf{w}}(\ell;0)=w_0$, and a similar argument to that used to arrive at \eqref{first_2nd_order_form_term} shows
\begin{align}\label{second_2nd_order_form_term}
	\mathfrak{m}^{(2)}_{\la_0}(\E^s(\ell,\cdot),\E^u(\ell,0))(q) =  2\int_{\ell}^\infty 
	\widetilde{u}_2(x;0) \p_\la  \widetilde{v}_2(x;0) +  \widetilde{v}_2(x;0) \p_\la \widetilde{u}_2(x;0)\,dx.
\end{align}
By uniqueness of solutions we have $\mathbf{w}(\cdot;0) = \widetilde{\mathbf{w}}(\cdot;0)$. Furthermore, since $\dot r(0) = \dot s(0)$,
\begin{equation}\label{initial_cond}
	\p_\la \mathbf{w}(\ell;0) = \dot{r}(0) = \dot{s}(0) = \p_\la \widetilde{\mathbf{w}}(\ell;0).
\end{equation}
Now, both $\p_\la\mathbf{w}(\cdot;0)$ and $\p_\la\widetilde{\mathbf{w}}(\cdot;0)$ solve the inhomogeneous differential equation
\begin{equation}\label{eq_inhomog_la_zero}
	\de{}{x}\left (\p_\la\mathbf{w}\right ) = A\left (\p_\la\mathbf{w}\right ) +  A_\la \left (\pmb{\phi} \, k_1+\pmb{\varphi} \,k_2 \right ),
\end{equation}
obtained by differentiating \eqref{1st_order_sys_N_vector} with respect to $\la$ and evaluating at $\la=0$, and using that $\mathbf{w}(\cdot;0) = \pmb{\phi} \, k_1+\pmb{\varphi} \, k_2$. (Note that $k_1,k_2\in\R$ are determined by the fixed vector $w_0$, where $w_0=\mathbf{w}(\ell;0)=\pmb{\phi}(\ell)k_1+\pmb{\varphi}(\ell)k_2$.) It follows from \eqref{initial_cond} and uniqueness of solutions of \eqref{eq_inhomog_la_zero} that indeed $\p_\la\mathbf{w}(x;0)=\p_\la\widetilde{\mathbf{w}}(x;0)$ for \emph{all} $x\in\R$. Collecting \eqref{first_2nd_order_form_term} and \eqref{second_2nd_order_form_term} together, \eqref{relative_crossform_N} then becomes
\begin{equation}\label{eq:final_2nd_la_form}
	\mathfrak{m}^{(2)}_{\la_0}(\E^u(\ell,\cdot),\E^s(\ell,\cdot))(w_0) =  -2\int_{-\infty}^\infty {u}_2(x;0) \p_\la  {v}_2(x;0) +  {v}_2(x;0) \p_\la {u}_2(x;0)\,dx.
\end{equation}
We need to understand the function $\p_\la\mathbf{w}(\cdot;0)$. Notice that it solves the inhomogeneous equation \eqref{eq_inhomog_la_zero} if and only if its third and fourth entries $\p_\la u_2(\cdot;0)$ and $\p_\la v_2(\cdot;0)$ solve 
\begin{align}\label{eq:inhomog_N}
	N\begin{pmatrix}
		\p_\la u_2(\cdot;0) \\ -\p_\la v_2(\cdot;0)
	\end{pmatrix} = \begin{pmatrix}
		\phi_x\, k_1 \\  -\phi\, k_2
	\end{pmatrix}.
\end{align}
This follows from differentiating the eigenvalue equation \eqref{eq:N_evp} with respect to $\la$, evaluating at $\la=0$ and making the substitutions (as in \eqref{subs})
\[
\p_\la u(\cdot;0) = \p_\la u_2(\cdot;0), \quad \p_\la v(\cdot;0)=-\p_\la v_2(\cdot;0), \quad  u(\cdot;0)= \phi_x\, k_1,  \quad v(\cdot;0)=-\phi\, k_2.
\] 
Both equations in \eqref{eq:inhomog_N},
\begin{align}\label{inhomog_eqns}
	\begin{gathered}
		-L_-\p_\la v_2(\cdot;0) = -\phi_x\, k_1, \\
		L_+ \p_\la u_2(\cdot;0)  = -\phi \, k_2,
	\end{gathered}
\end{align}
are solvable by virtue of the Fredholm alternative, since $\langle\phi',\phi\rangle_{L^2(\R)}=0$ and hence $\phi_x\in\ker(L_-)^\perp$ and  $\phi\in\ker(L_+)^\perp$. Denoting by $\widehat{v}$ and $\widehat{u}$ any solutions to
\begin{equation}\label{L-_inhomog_eqn}
	-L_- v = \phi_x \qquad\text{and} \qquad L_+u = \phi
\end{equation}
in $H^4(\R)$ respectively (note the sign change in both equations from \eqref{inhomog_eqns}), \eqref{eq:final_2nd_la_form} becomes
\begin{align}\label{2nd_form_final}
	\mathfrak{m}^{(2)}_{\la_0}(\E^u(\ell,\cdot),\E^s(\ell,\cdot))(w_0) &=   
	2\left (\int_{-\infty}^\infty \phi_x \,\widehat{v} \,dx\,\right )k_1^2 - 
	2\left (\int_{-\infty}^\infty \phi \,\widehat{u} \,dx\,\right )k_2^2,
\end{align}
recalling that $u_2=\phi_xk_1$ and $v_2=-\phi \,k_2$.

Having computed the form, we count the number of negative squares. Using \eqref{second_order_initial}, and defining $\mathcal{I}_1$ and $\mathcal{I}_2$ as in \eqref{key_integrals}, we find that
\begin{equation}\label{mc}
	\Mas(\E^u(\ell,\cdot),\E^s(\ell,\cdot);[0,\e] ) = -n_- (\mathfrak{m}^{(2)}_{\la_0} ) = 
	\begin{cases*}
		0 & \,$	\mathcal{I}_1>0, \, \mathcal{I}_2<0$, \\
		-1 & \,$\mathcal{I}_1\mathcal{I}_2>0$, \\
		-2  & \,$\mathcal{I}_1<0, \,\mathcal{I}_2>0$.
	\end{cases*}
\end{equation}
This completes the calculation of the second term in \eqref{c_again}. Combining \eqref{mc} with \eqref{eq:mathfrak_b} yields \eqref{c_value}. 
\end{proof}

We are now ready to prove \cref{thm:main_lower_bound}.
\begin{proof}[Proof of \cref{thm:main_lower_bound}]
	By homotopy invariance and additivity under concatenation, we have
	\begin{multline*}\label{}
		\Mas(\E^u(\cdot,0),\E^s(\ell,0);[-\infty,\ell] ) +\Mas(\E^u(\ell,\cdot),\E^s(\ell,\cdot);[0,\la_\infty] )  \\  -\Mas(\E^u(\cdot,\la_\infty),\E^s(\ell,\la_\infty);[-\infty,\ell] ) -\Mas(\E^u(-\infty,\cdot),\E^s(\ell,\cdot);[0,\la_\infty] ) =0.
	\end{multline*}
	By \cref{lemma:Mas_zero_bottom_right} the last two terms on the left hand side vanish. Recalling the definition of $\mathfrak{c}$ from \eqref{mathfrakc} and using the concatenation property once more,
	\begin{equation}\label{eq:proof1}
		\Mas(\E^u(\cdot,0),\E^s(\ell,0);[-\infty,\ell-\e] ) + \mathfrak{c} +  \Mas(\E^u(\ell,\cdot),\E^s(\ell,\cdot);[\e,\la_\infty] )  =0.
	\end{equation}
	Since the Maslov index counts \emph{signed} crossings, the number of crossings along $\Gamma_2$ for $\la>0$ is bounded from below by the absolute value of the Maslov index of this piece, i.e.
	\begin{equation}\label{eq:proof2}
		n_+(N) \geq |\Mas(\E^u(\ell,\cdot),\E^s(\ell,\cdot);[\e,\la_\infty] ) |.
	\end{equation}
	Combining \eqref{eq:proof1} and \eqref{eq:proof2} with \cref{lemma_P-Q}, the inequality \eqref{lower_bound} follows. The statement of the theorem then follows from the computation of $\mathfrak{c}$ in \cref{lemma:value_of_c}.
\end{proof}
We conclude with the proof of \cref{thm:VK_criterion}, for which we will need the following lemma. The first assertion gives a sufficient condition for monotonicity of the Maslov index along $\Gamma_2$, and is adapted from \cite[Lemma 5.1]{CCLM23}. The second assertion is given in \cite[Lemma 5.2]{CCLM23}.
\begin{lemma}\label{lemma:monotone_cross}
	If $L_-$ is a nonpositive operator, then each crossing $\la=\la_0>0$ of the path $\la\mapsto\left ( \E^u(\ell,\la), \E^s(\ell,\la)\right )$ is positive.
	Moreover, in this case $\spec(N) \subset \R \cup i\R$. 
\end{lemma}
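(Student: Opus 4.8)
The plan is to prove the two assertions separately, in each case exploiting the eigenvalue relations $-L_-v=\la u$ and $L_+u=\la v$ that an eigenfunction $(u,v)^\top$ of $N$ must satisfy, together with the nonpositivity of $L_-$. For the first assertion I would reuse the first-order relative crossing form already computed in the proof of \cref{lemma:value_of_c}, namely
\[
	\mathfrak{m}_{\la_0}(\E^u(\ell,\cdot),\E^s(\ell,\cdot))(w_0) = -2\int_{-\infty}^\infty u_2(x;\la_0)\,v_2(x;\la_0)\,dx,
\]
evaluated at an arbitrary crossing $\la_0>0$. The argument mirrors \cite[Lemma 5.1]{CCLM23}.

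First I would observe that at such a crossing $w_0\in\E^u(\ell,\la_0)\cap\E^s(\ell,\la_0)$ corresponds, via \eqref{subs} with $u_2=u$ and $v_2=-v$, to a genuine eigenfunction $(u,v)^\top$ of $N$ with eigenvalue $\la_0$, which may be taken real since $N$ has real coefficients and $\la_0\in\R$. Using $u_2=u$, $v_2=-v$ and the relation $-L_-v=\la_0 u$, the form becomes
\[
	\mathfrak{m}_{\la_0}(w_0) = 2\int_{-\infty}^\infty u\,v\,dx = -\f{2}{\la_0}\,\langle L_- v, v\rangle_{L^2(\R)}.
\]
The key point is then that for a nonzero eigenfunction with $\la_0>0$ one necessarily has $v\neq 0$ and $v\notin\ker(L_-)$: if $v=0$ then $\la_0 u=-L_-v=0$ forces $u=0$, while if $v\in\ker(L_-)$ then again $\la_0 u=0$ gives $u=0$ and hence $\la_0 v=L_+u=0$, contradicting $(u,v)^\top\neq 0$ in either case. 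Since $L_-$ is nonpositive, $\langle L_- v,v\rangle<0$ strictly whenever $v\notin\ker(L_-)$, so $\mathfrak{m}_{\la_0}(w_0)>0$ for every nonzero $w_0$. Thus $\mathfrak{m}_{\la_0}$ is positive definite, $n_+(\mathfrak{m}_{\la_0})=\dim\big(\E^u(\ell,\la_0)\cap\E^s(\ell,\la_0)\big)$, and the crossing is positive.

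For the second assertion I would argue directly from the eigenvalue relations, reproducing \cite[Lemma 5.2]{CCLM23}. Let $\la\in\spec(N)$ with eigenfunction $(u,v)^\top\neq 0$, now possibly complex, and set $z=\langle u,v\rangle_{L^2}$. Pairing $-L_-v=\la u$ with $v$ and $L_+u=\la v$ with $u$, and using selfadjointness of $L_\pm$, gives
\[
	-\langle L_- v,v\rangle = \la z, \qquad \langle L_+u,u\rangle = \la\bar z,
\]
with both left-hand sides real. Taking the conjugate of the first identity and dividing by the second shows that, when $z\neq 0$ and $\langle L_+u,u\rangle\neq 0$, the quantity $\bar\la/\la$ equals the real ratio of these two real numbers; hence $\bar\la/\la\in\R$, which forces $\la\in\R\cup i\R$. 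The remaining degenerate cases are where the nonpositivity of $L_-$ enters and are, I expect, the main obstacle: if $z=0$ then $\langle L_- v,v\rangle=0$, so $v\in\ker(L_-)$ by nonpositivity, whence $\la u=-L_-v=0$ and, as above, either $\la=0$ or $(u,v)^\top=0$; while if $\langle L_+u,u\rangle=0$ with $z\neq 0$ then $\la\bar z=0$ gives $\la=0$. In every case $\la\in\R\cup i\R$, completing the proof.
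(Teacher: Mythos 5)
Your proof is correct. For the first assertion you follow essentially the paper's own route: the same first-order relative crossing form $-2\int u_2v_2\,dx$ from the proof of \cref{lemma:value_of_c}, rewritten as $-\tfrac{2}{\la_0}\langle L_-v,v\rangle$ via $-L_-v=\la_0 u$; the only cosmetic difference is that the paper makes the strict negativity explicit by decomposing $v=\al\phi+v_\perp$ with $v_\perp\in\ker(L_-)^\perp$ and $v_\perp\neq 0$, whereas you argue directly that $v\notin\ker(L_-)$ — both hinge on the same standard fact that a nonpositive selfadjoint operator satisfies $\langle L_-v,v\rangle<0$ off its kernel (Cauchy--Schwarz for the semi-definite form), which you might state explicitly. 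For the second assertion you take a genuinely different and more elementary route: the paper reduces \eqref{eq:N_evp} to the selfadjoint problem $(-L_-|_{X_c})^{1/2}\Pi L_+\Pi(-L_-|_{X_c})^{1/2}w=\la^2 w$ (citing \cite{CCLM23}) to conclude $\la^2\in\R$, while you pair the two eigenvalue relations against $v$ and $u$ to get the real quantities $-\langle L_-v,v\rangle=\la z$ and $\langle L_+u,u\rangle=\la\bar z$, deduce $\bar\la/\la\in\R$ (hence $=\pm1$) in the generic case, and dispose of the degenerate cases $z=0$ and $\langle L_+u,u\rangle=0$ using nonpositivity. Your version avoids the operator square root and the constrained-operator machinery entirely, at the cost of an explicit case analysis; the paper's version packages the same information into a single selfadjoint spectral statement that also identifies the $\la^2$ as eigenvalues of a concrete operator. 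Both arguments address only the point spectrum, which suffices here since the essential spectrum was already shown in \cref{sec:setup2} to lie on $i\R$.
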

\begin{proof}
	If $\la=\la_0$ is a crossing then the eigenvalue equations
	\begin{equation}\label{EVP_chap5}
		-L_-v = \la_0 u, \qquad L_+u = \la_0 v
	\end{equation}
	are satisfied for some $\tilde{u},\tilde{v}\in H^4(\R)$. Notice that $\la_0>0$ necessitates that \emph{both} $\tilde{u}$ and $\tilde{v}$ are nontrivial. 
	
	Solving the first equation in \eqref{EVP_chap5} yields $\tilde{v} = \al \phi + \tilde{v}_\perp$ for some $\al\in\R$, where $\ker(L_-) = \spn\{\phi\}$ and  $\tilde{v}_\perp\in\ker(L_-)^\perp$. Therefore
	\begin{equation}\label{eq:Lneg}
		\langle L_- \tilde{v}, \tilde{v}\rangle_{L^2(\R)} = \langle L_- ( \al \phi + \tilde{v}_\perp), \al \phi + \tilde{v}_\perp\rangle_{L^2(\R)} =  \langle L_- \tilde{v}_\perp, \tilde{v}_\perp\rangle_{L^2(\R)} <0
	\end{equation}
	because $L_-$ is nonpositive and $\hat{v}_\perp\in\ker(L_-)^\perp$. Now analysing the crossing form \eqref{eq:final_1st_la_form} for the path $\la\mapsto\left ( \E^u(\ell,\la), \E^s(\ell,\la)\right )$, where $v_2=-\tilde{v}$ and $u_2=\tilde{u}$, we have
	\begin{align*}
		\mathfrak{m}_{\la_0}(\E^u(\ell,\cdot),\E^s(\ell,\cdot))(q)  = -\f{2}{\la_0}\int_{-\infty}^{\infty} \left (\la_0\, u_2\right ) v_2 \,dx = -\f{2}{\la_0}\langle L_- \tilde v, \tilde v\rangle_{L^2(\R)} > 0,
	\end{align*}
	which was to be proven. The second statement may be proven using similar arguments as in the proof of \cite[Lemma 5.1]{CCLM23}. Namely, we can rewrite  \eqref{eq:N_evp} as the selfadjoint eigenvalue problem 
	\begin{equation}\label{saevp}
		\left (-L_-|_{X_c} \right )^{1/2} \Pi L_+ \Pi \left (-L_-|_{X_c} \right)^{1/2} w =  \la^2 w,
	\end{equation}
	where $X_c = \ker(L_-)^\perp$, $\Pi$ is the orthogonal projection in $L^2(\R)$ onto $X_c$, $\left (-L_-|_{X_c} \right )^{1/2} $ is well-defined because $-L_-$ is nonnegative, and $w= \left (-L_-|_{X_c} \right )^{1/2} \Pi v$. It follows that $\la^2\in\R$. For more on the equivalence of  \eqref{eq:N_evp} with \eqref{saevp}, see \cite[Lemma 3.21]{CCLM23}. We omit the details here. 
\end{proof}
\begin{proof}[Proof of \cref{thm:VK_criterion}]
	If $Q=0$ then it follows from \cref{lemma:monotone_cross} that
	\begin{equation*}
		\Mas(\E^u(\ell,\cdot), \E^s(\ell,\cdot); [\e,\la_\infty]) = n_+(N)
	\end{equation*} 
	for $\e$ small enough. Using this and \cref{lemma_P-Q} in \eqref{eq:proof1}, we obtain 
	\begin{equation*}\label{}
		n_+(N) = P-Q-\mathfrak{c}  = 1 - \mathfrak{c}.
	\end{equation*}
	For the evaluation of $\mathfrak{c}$, using \eqref{L-_inhomog_eqn} we have
	\begin{equation*}\label{}
		\mathcal{I}_1 = \int_{-\infty}^\infty \phi_x \,\widehat{v} \,dx=-\int_{-\infty}^{\infty}  \left (L_- \widehat{v}\right ) \widehat{v} \,dx \geq 0,
	\end{equation*}
	because $Q=0$. An argument similar to \eqref{eq:Lneg} shows that in fact $\mathcal{I}_1>0$. \Cref{lemma:value_of_c} now yields the value of $\mathfrak{c}$. In particular, if $\mathcal{I}_2>0$, then $\mathfrak{c}=0$ and $n_+(N) = 1$. On the other hand, if $\mathcal{I}_2<0$, then $\mathfrak{c}=1$ and $n_+(N) = 0$, and in this case $\spec(N)\subset i\R$  by the second assertion of \cref{lemma:monotone_cross}.
\end{proof}

\section{Application: spectral stability of the Karlsson and H\"o\"ok solution}\label{sec:KH_stability}

In this section we apply our theory to confirm the spectral stability of the Karlsson and H\"o\"ok solution \eqref{KH_soln}, i.e.
\begin{equation*}
	\phi_{\text{KH}}(x) = \sqrt{\f{3}{10}} \sech^2\left (\f{x}{2\sqrt{5}}\right ),
\end{equation*}
which solves \eqref{SWE} for $\beta=4/25, \sigma_2=-1$. In this case $\esspec(L_\pm) =  (-\infty,-4/25 )$. While orbital (and hence spectral) stability was proven in \cite{NataliPastor15}, the following merely serves to showcase how our analytical results may be implemented for a given soliton solution. 

In \cref{fig:curves}, we have plotted the set of points in the $\la x$-plane where the unstable bundle nontrivially intersects the stable subspace of the asymptotic system for each of the eigenvalue problems for $L_+$ and $L_-$. More precisely, for the $L_+$ problem we have the following construction (the construction for the $L_-$ problem is similar). We denote the eigenvalues of the $\la$-dependent asymptotic matrix $A_+(\la)$ by $\{\pm\mu_1(\la),\pm\mu_2(\la)\}$ and the corresponding stable and unstable eigenvectors by $\mathbf{p}_{1,2}(\la)$ and $\mathbf{q}_{1,2}(\la)$ respectively, so that for $i=1,2$,
\begin{align*}
	\ker\left (  A_+(\la) + \mu_i(\la) \right ) = \spn \{\mathbf{p}_i(\la)\}, \qquad \ker\left (  A_+(\la) - \mu_i(\la) \right ) = \spn \{\mathbf{q}_i(\la)\}
\end{align*}
(similar to \eqref{spatialevalsL+}--\eqref{kernels}). Collecting the vectors $\mathbf{p}_{1,2}$ in the columns of a $4\times 2$ frame and right multiplying by the inverse of the resulting upper $2\times 2$ block yields the following real frame for $\Ss_+(\la)$,
\begin{equation}
	\begin{pmatrix}
		I \\ S_+(\la)
	\end{pmatrix}, \qquad S_+(\la)\in\R^{2\times 2}.
\end{equation}
Next, we require a frame for the unstable bundle, suitably initialised. Making similar manipulations as above on the scaled unstable eigenvectors $e^{-\mu_1(\la)\ell}\mathbf{q}_1(\la), e^{-\mu_2(\la)\ell}\mathbf{q}_2(\la)$ yields a real frame for the unstable subspace, which we denote by
\begin{equation}\label{ICunstable}
	\begin{pmatrix}
		I \\ U_+(\la)
	\end{pmatrix}, \qquad U_+(\la)\in\R^{2\times 2}.
\end{equation}
A frame for the unstable bundle is then given by solutions to \eqref{1st_order_sys_L+} initialised at \eqref{ICunstable}. We denote this frame by $ \mathbf{U}_+(x,\la) =
(X_+(x,\la), Y_+(x,\la))$. The \emph{eigenvalue curves} are then given by the locus of points $(\la,x)$ such that  
\begin{equation*}
	\det \begin{pmatrix}
		X_+(x,\la) & I \\ Y_+(x,\la) & S_+(\la)
	\end{pmatrix} = \det (S_+(\la)X_+(x,\la)-Y_+(x,\la) ) = 0,
\end{equation*}
see \cref{fig:curves}. (The name follows from the fact that each point $(\la,x)$ on such curves represents an eigenvalue $\la$ of the operator $L_+$ with domain $\dom(L_+) = \{ u\in H^4(-\infty, s): (u''(s)+\sigma_2 u(s), u(s), u'(s), u'''(s)) \in \Ss_+(\la)  \}$.) The intersections of the eigenvalue curves with $\Gamma_1$ (where $\la=0$) thus represent conjugate points, while the crossings along $\Gamma_2$ (where $x=\ell$) represent the eigenvalues of $L_+$. 
\begin{rem}
	Strictly speaking, as per our analysis we should be using $\E_\pm^s(x,\ell)$ for some large $\ell$ as our reference plane. However, for the purposes of graphical illustration, it suffices to use $\Ss_\pm(\la)$, the train of which is an arbitrarily small perturbation of the train of $\E_\pm^s(x,\ell)$ (for large enough $\ell$).
\end{rem}
\begin{figure}[h]
	\hspace*{\fill}
	\subcaptionbox{\label{}$L_+$ eigenvalue curves.} 
	{\includegraphics[width=0.42\textwidth]{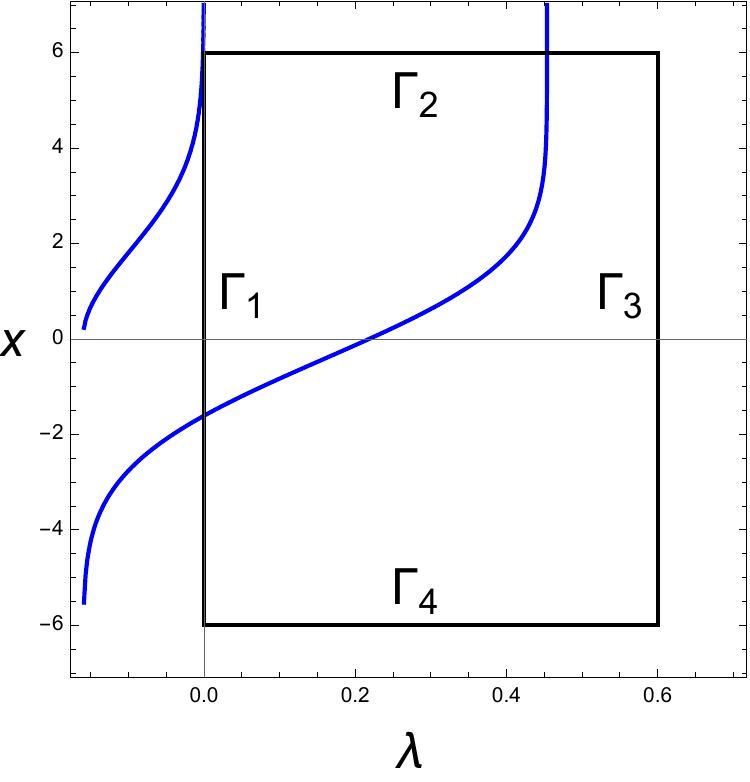}}\hfill 
	\subcaptionbox{\label{L-curves}$L_-$ eigenvalue curves.}
	{\includegraphics[width=0.42\textwidth]{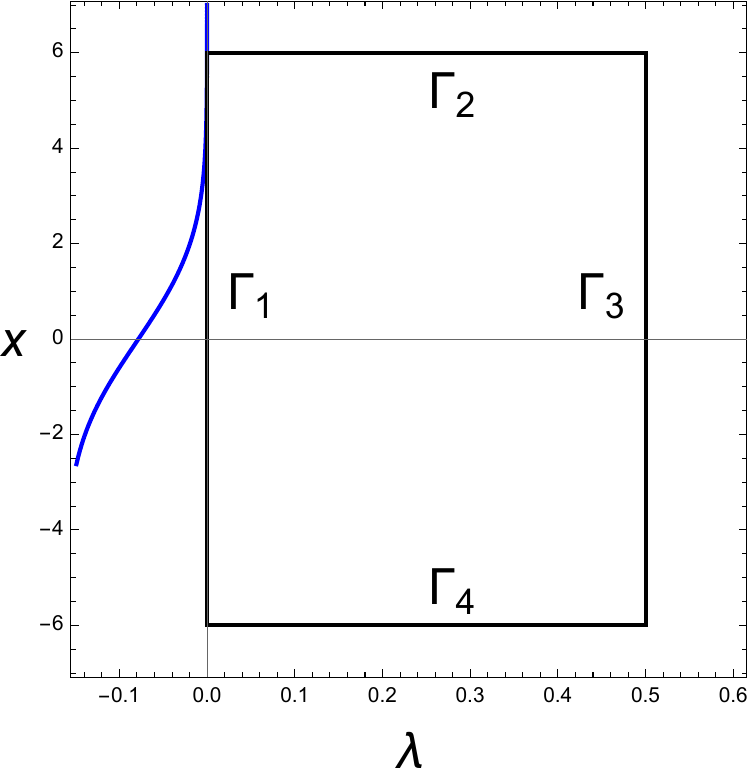}}
	\hspace*{\fill}
	\caption{$L_+$ and $L_-$ eigenvalue curves and Maslov box for the Karlsson and H\"o\"ok solution $\phi_{\text{KH}}$, where $\be=4/25$, $\sigma_2=-1$ and $\ell=6$. In both cases, an eigenvalue curve is asymptotic to the line $\la=0$ (but never crosses).} 
	\label{fig:curves}
\end{figure}

From \cref{fig:curves}, we conclude that there is one conjugate point for the $L_+$ problem, while there are none for the $L_-$ problem (the eigenvalue curve in each subfigure which is asymptotic to $\la=0$ never crosses $\la=0$). \Cref{thm:Lpm_eval_counts} correctly predicts the number of crossings along $\Gamma_2$ in both cases, i.e. $P=1, Q=0$. By \cref{thm:VK_criterion}, spectral stability of the Karlsson and H\"o\"ok solution is thus determined by the sign of $\mathcal{I}_2$. Using the theory developed in \cite{Albert92}, it was shown in \cite[Remark 4.6]{NataliPastor15} that $\mathcal{I}_2<0$. Hence $\phi_{\text{KH}}$ is spectrally stable.

\section{Concluding remarks}\label{sec:concluding_rems}

\Cref{thm:main_lower_bound} remains true for pure quartic solitons, i.e. soliton solutions to \eqref{4NLS_original} with $\beta_2=0$. In this case, non-dimensionalising \eqref{4NLS_original} with the transformations
\[ 
\psi = \sqrt{ \f{24\gamma}{|\beta_4|}} \Psi, \qquad z = \f{24}{|\beta_4|}x,
\]
(where $\gamma>0$, $\be_4<0$) leads to \eqref{4NLS} with $\sigma_2=0$ (after interchanging $z$ back to $x$). { Our proof of \cref{thm:main_lower_bound} can thus be applied to PQSs with the following adjustments. The substitutions \eqref{subs} with $\sigma_2=0$ reduce \eqref{uv_system} with $\sigma_2=0$ to the first order system \eqref{1st_order_sys} with $\sigma_2=0$ and $\al(x) = 3\phi(x)^2 -\beta$, $\eta(x) = -\phi(x)^2+\beta$. In lieu of \eqref{assumptions} and \eqref{assumption2}, one simply requires that $\be>0$ to ensure that the origin in \eqref{1st_order_SWE_Ham} is hyperbolic, the essential spectra of $L_\pm$ are confined to the negative half line, and the spatial eigenvalues of $L_\pm$ are distinct at $\la=0$. The stable subspace $\Ss_\pm(0)$ now has the frame
\begin{align}\label{S+frames2}
		\mathbf{S}_\pm &= \begin{pmatrix}
			I \\ S_\pm
		\end{pmatrix}, \qquad S_\pm= \frac{1}{\sqrt{2 \sqrt{\beta } }} \left(
		\begin{array}{cccc}
			\mp 1   &  -\sqrt{\beta }\\
			 -\sqrt{\beta } & \pm \beta
		\end{array}
		\right),
	\end{align}
	and computing crossing forms in $x$ yield expressions identical to those previously computed, but with $\sigma_2=0$. For example, for the $L_+$ problem, the first order form \eqref{FOF} is unchanged, while the third order forms described in Cases 1 and 2 in the proof of \cref{lemma:L+_Mas_left_conjpoints} are given by
	\[
		\mathfrak{m}_{x_0}^{(3)} = - \frac{3a_0^2 \phi(x_0)^2}{\sqrt{\beta}} \qquad \text{and} \qquad \mathfrak{m}_{x_0}^{(3)}= - \frac{3a_0^2 \phi(x_0)^2}{4\sqrt{\beta}}
	\]
	respectively. Monotonicity of the paths $x\mapsto(\E_\pm^u(x,0),\Ss_\pm(0))$ is therefore preserved. The proof of \cref{thm:main_lower_bound} for PQSs is then as given in \cref{sec:proof_lower_bound}. 
}	

In this work, similar to the analyses in \cite{Corn19,HLS18,Howard21}, the central objects are Lagrangian pairs comprising the unstable and stable bundles. For the fourth order selfadjoint eigenvalue problems discussed in \cref{sec:L+L__counts}, we showed that monotonicity holds for the pairs
\begin{equation}\label{paths7}
	\Gamma \ni (x,\la) \mapsto ( \E_\pm^u(x,\la), \E_\pm^s(\ell,\la) )\in \cL(2)\times \cL(2)
\end{equation}
 in both $x$ and $\la$. Another possible choice for the second entry of the pair in \eqref{paths7}, as in \cite{BJP24,Howard21}, is to use the fixed \emph{sandwich} plane with frame
\[
\begin{pmatrix}
	1 & 0 \\ 0 & 0 \\ 0 & 0 \\ 0 & 1
\end{pmatrix},
\]
with respect to which the unstable bundles $\E_\pm^u(x,\la)$ will be monotonic in both $x$ and $\la$. It should then be possible to argue as in \cite{BCLJMS18,BJP24}, by first considering the problem on the half line  $(-\infty, \ell]$ for large $\ell$, and then using asymptotic arguments similar to those in \cite{SandstedeScheel2000} to recover a Morse--Maslov theorem on $\R$. However, for the eigenvalue problem for $N$, it is unclear how to recover the contribution $\mathfrak{c}$ using the sandwich reference plane (of $\R^{8}$), which obscures the existence of a crossing at the top left corner of the Maslov box.

Generally speaking, a rigorous proof of spectral stability or instability using the Maslov index is a two-step process. The first involves proving that the existence of unstable eigenvalues for an arbitrary stationary state can be determined by counting conjugate points. The second involves proving the existence (or non-existence) of conjugate points given a particular stationary state. Regarding the latter, most often this has been done numerically \cite{CDB09,Beck_Jaquette22,BJP24}, and in some cases analytically \cite{CJ18,BCLJMS18} (see, for example, the beautiful argument in \cite{BCLJMS18}, where a clever choice of reference plane allows the authors to exploit the reversibility symmetry of the underlying PDE to force a conjugate point for pulses in reaction-diffusion systems). For the current problem, it would be interesting to analytically determine exact counts of the $L_+$ and $L_-$ conjugate points given a particular soliton solution; one difficulty in doing so is the inability to determine exact formulas for both basis vectors of the unstable bundle $\E_\pm^u(x,0)$. More feasibly, our results facilitate the numerical verification of (in)stability given a particular soliton in the following cases: (1) using \cref{cor:JonesGrillakis} and \cref{thm:Lpm_eval_counts} to prove instability by showing $|p_c-q_c| \geq 2$, and (2) when either $P=p_c=0$ or $Q=q_c=0$. As pointed out in \cref{sec:proof_lower_bound}, in the latter case our lower bound becomes an exact count, $n_+(N) = |P - Q - \mathfrak{c}| = |p_c - q_c - \mathfrak{c}|$, and spectral stability or instability follows by showing that $|p_c-q_c-\mathfrak{c}|=0$ or $|p_c-q_c-\mathfrak{c}|\geq 1$, respectively.

Finally, we comment on the case when either $\mathcal{I}_1= 0$ or $\mathcal{I}_2=0$. In this case, the second order form \eqref{2nd_form_final} is degenerate, and one proceeds by computing the third order crossing form. The Fredholm Alternative dictates that the algebraic multiplicity of $0\in\spec(N)$ increases by { at least} one for each quantity $\mathcal{I}_1, \mathcal{I}_2$ that vanishes. Similar to \eqref{2nd_form_final}, any third order form will be given by the $L_2(\R)$ inner products, akin to $\mathcal{I}_1$ and $\mathcal{I}_2$, of functions in $\ker(N)$ and $\ker(N^3)\backslash \ker(N^2)$. Due to the Hamiltonian structure of $N$, which implies symmetry in $\spec(N)$ about the real and imaginary axes, we expect that all higher order crossing forms of odd order will be identically zero. The contribution to the Maslov index is then given by the sum of the negative indices of any nondegenerate even order forms.

\begin{appendices}
	\section{Appendix: Removal of \cref{hypo:nonzero}.}\label{sec:appendixA}
	
	In order to complete the proof of \cref{lemma:L+_Mas_left_conjpoints}, we need to account for the case when \cref{hypo:nonzero} fails, i.e. $\phi(x_0)=0$. In this case all of the crossing forms computed in the proof of \cref{lemma:L+_Mas_left_conjpoints} are identically zero, and we need to compute higher order crossing forms. The key observation here is that since $\phi$ solves the standing wave equation \eqref{SWE}, a fourth order ODE, for every $x\in\R$ at least one of the elements of the set $\{\phi(x),\phi'(x), \phi''(x), \phi'''(x)\}$ is nonzero. As we will see, it will follow that computing sufficiently many higher order crossing forms will yield enough nonzero summands in the left hand side of \eqref{dimension_add_up}, i.e. such that
	\begin{equation}
		\sum_{k\geq 1} \left ( n_+(\mathfrak{m}_{x_0}^{(k)})  + n_-(\mathfrak{m}_{x_0}^{(k)}) \right ) = \dim \E_\pm^u(x_0,0)\cap \Ss_\pm(0).
	\end{equation}
	Similar to the proof of \cref{lemma:L+_Mas_left_conjpoints}, we separate the analyses depending on the nature of the intersection $\E_\pm^u(x_0,0)\cap \Ss_\pm(0)$, and focus on the $L_+$ problem only; the proof for the $L_-$ problem is similar. We further split the analysis of each of these cases into three subcases based on the lowest nonzero derivative of $\phi$ at $x_0$. Since the calculations are tedious and similar to those in the proof of \cref{lemma:L+_Mas_left_conjpoints}, in the interest of expediency we present only the main results. 
	
	The following facts are needed for the results listed thereafter. If $\phi(x_0)=0$ and $\phi'(x_0)\neq0$, then it follows from \eqref{CminusSBS} and \eqref{widetildeC} that
	\begin{equation}\label{C=SBS}
		C_+ = S_+B_+S_+, \qquad C'_+=0, \qquad C''(x_0) = \begin{pmatrix}
			0 & 0 \\ 0 & 6 \phi'(x_0)^2
		\end{pmatrix}.
	\end{equation}
	If $\phi(x_0)=\phi'(x_0)=0$ and $\phi''(x_0)$, then 
	\begin{equation}\label{caseii}
		C_+ = S_+B_+S_+, \qquad C_+''' = C_+''=C_+'=0, \qquad C_+^{(4)}(x_0) = \begin{pmatrix}
			0 & 0 \\ 0 & 18 \phi''(x_0)^2
		\end{pmatrix}.
	\end{equation}
	Finally, if  $\phi^{(k)}(x_0)=0$ for $0\leq k \leq 2$ and $\phi'''(x_0)\neq0$, then $C_+ = S_+B_+S_+$ and 
	\begin{equation}\label{caseiii}
		C_+^{(5)} = C_+^{(4)} = C_+''' = C_+''=C_+'=0, \qquad C_+^{(6)}(x_0) = \begin{pmatrix}
			0 & 0 \\ 0 & 60 \phi'''(x_0)^2
		\end{pmatrix}.
	\end{equation}
	Expressions for derivatives of $X(x)$ and $Y(x)$ at $x_0$ up to order 9 are required, and can be simplified using \eqref{C=SBS} -- \eqref{caseiii} where appropriate. The vectors $h_i$ are found using \eqref{k_equations}; in particular, assuming the set $\{h_0,\dots, h_{k-2}\}$ satisfies \eqref{k_equations}, then  $\{h_0,\dots, h_{k-2}, h_{k-1}\}$ solves
	\begin{align}
		(Y_0-S_+X_0) h_{k-1} = -\sum_{i=0}^{k-2} {k-1 \choose i}\left ( Y^{(k-1-i)}(x_0) - S_+ X^{(k-1-i)}(x_0)\right )h_{i}.
	\end{align}
	In this case the forms $\mathfrak{m}^{(k)}_{x_0}$ are computed via \eqref{kth_crossing_form_practice}, which amounts to
	\begin{equation}
		\mathfrak{m}^{(k)}_{x_0}(\E^u_+(\cdot,0),\Ss_+(0))(w_0) =   -\sum_{i=0}^{k-1} {k \choose i} \left \langle  \left ( Y^{(k-i)}(x_0) - S_+ X^{(k-i)}(x_0)\right )h_{i}, k_0 \right \rangle.
	\end{equation}
	
	\emph{Case 1: $\dim \E^u_+(x_0,0) \cap \Ss_+(0)=1, b_0\neq 0$.} In this case \eqref{W1_not_spns1} holds, i.e. $W_1 = \E^u_+(x_0,0)\cap \Ss_+(0) = \spn \{a_0\mathbf{s}_1 + b_0 \mathbf{s}_2\}$ for some fixed $a_0$ and $b_0\neq 0$.
	
	\emph{Case (i): $\phi(x_0)=0, \phi'(x_0)\neq0$.} An identical calculation to that in the proof of \eqref{lemma:L+_Mas_left_conjpoints} shows that $h_1\in\ker (Y_0-S_+X_0)$ and $\mathfrak{m}^{(2)}_{x_0}=0$. Using \eqref{C=SBS}, it can be shown that $h_2\in\ker(Y_0-S_+X_0)$ and
	\begin{equation}\label{m3phi0case1i}
		\mathfrak{m}^{(3)}_{x_0}(\E^u_+(\cdot,0),\Ss_+(0))(w_0) = - \left \langle C''_+ k_0,k_0\right \rangle = -6[ \phi'(x_0)]^2 b_0^2 < 0. 
	\end{equation}
	\emph{Case (ii): $\phi(x_0)=0, \phi'(x_0)=0, \phi''(x_0)\neq0$.} We now have $\mathfrak{m}^{(k)}_{x_0}=0$ for $k=1,2,3$. Using that $C_+'''(x_0) = C_+''(x_0)=0$, it can be shown that $h_3\in\ker(Y_0-S_+X_0)$. With $h_k\in\ker(Y_0-S_+X_0)$ for $k=1,2,3$, one finds $\mathfrak{m}^{(4)}_{x_0}=0$. For the fifth order form, one additionally has $h_4\in\ker(Y_0-S_+X_0)$ and 
	\begin{equation}\label{m5_case2ii}
		\mathfrak{m}^{(5)}_{x_0}(\E^u_+(\cdot,0),\Ss_+(0))(w_0) = - \left \langle C^{(4)}_+ k_0,k_0\right \rangle = -18 \left [\phi''(x_0)\right ]^2 b_0^2< 0. 
	\end{equation}
	\emph{Case (iii): $\phi(x_0)=0, \phi'(x_0)=\phi''(x_0)=0, \phi'''(x_0)\neq0$.} Now $\mathfrak{m}^{(k)}_{x_0}=0$ for $k=1,2,3,4,5$. Using $C_+^{(5)}(x_0) = C_+^{(4)}(x_0)=0$, it can be shown that $h_5\in\ker(Y_0-S_+X_0)$ and $\mathfrak{m}^{(6)}_{x_0}=0$. Moreover, one finds $h_6\in\ker(Y_0-S_+X_0)$ and 
	\begin{equation}\label{m7_case2iii}
		\mathfrak{m}^{(7)}_{x_0}(\E^u_+(\cdot,0),\Ss_+(0))(w_0) = - \left \langle C^{(6)}_+ k_0,k_0\right \rangle = -60 \left [\phi'''(x_0)\right ]^2 b_0^2< 0. 
	\end{equation}
	
	\emph{Case 2: $\dim \E^u_+(x_0,0) \cap \Ss_+(0)=1, b_0=0$.} In this case \eqref{assumption_W1} holds, i.e. $W_1 = \E^u_+(x_0,0)\cap \Ss_+(0) = \spn \{\mathbf{s}_1\}$, and for any $i\geq 1$, we have $C_+^{(i)}k_0=0$.
	
	\emph{Case (i): $\phi(x_0)=0, \phi'(x_0)\neq0$.} As in the proof of \cref{lemma:L+_Mas_left_conjpoints}, we have $\mathfrak{m}^{(k)}_{x_0}=0$ for $k=1,2,3$. Using \eqref{C=SBS} and $h_k\in\ker(Y_0-S_+X_0)$ for $k=1,2,$ one finds that $h_3 \in\ker(Y_0-S_+X_0)$ and $\mathfrak{m}^{(4)}_{x_0}=0$. With these $h_k$ one then finds that $h_4$ satisfies 
	\begin{equation}
		(Y_0 - S_+X_0) h_4 = -3C_+''B_+Y_0h_0 = -3 C_+''B_+S_+k_0,
	\end{equation}
	and
	\begin{equation*}
		\mathfrak{m}^{(5)}_{x_0}(\E^u_+(\cdot,0),\Ss_+(0))(w_0) = -3\cdot 4 \left \langle S_+B_+C_+''B_+S_+k_0,k_0 \right \rangle = -3\cdot 4  \left ( \f{6\phi'(x_0)^2a_0^2}{2\sqrt{\beta} - \sigma_2} \right ) <0.
	\end{equation*}
	\emph{Case (ii): $\phi(x_0) = \phi'(x_0)=0, \phi''(x_0)\neq0$.} Now $C_+'''=C_+''=C_+'=0$, and $\mathfrak{m}^{(k)}_{x_0}=0$ for $k=1,2,3,4,5$. With $h_k\in\ker(Y_0-S_+X_0)$ for $k=1,2,3,4$, one finds that $h_5 \in\ker(Y_0-S_+X_0)$ and $\mathfrak{m}^{(6)}_{x_0}=0$. With these $h_k$  one finds that $h_6$ satisfies
	\begin{equation}
		(Y_0 - S_+X_0) h_6 = -5 C_+^{(4)}B_+S_+k_0,
	\end{equation}
	and
	\begin{equation*}
		\mathfrak{m}^{(7)}_{x_0}(\E^u_+(\cdot,0),\Ss_+(0))(w_0) = -5\cdot 6 \left \langle S_+B_+C_+^{(4)}B_+S_+k_0,k_0 \right \rangle = -5\cdot 6 \left ( \f{18\phi''(x_0)^2a_0^2}{2\sqrt{\beta} - \sigma_2} \right ) <0.
	\end{equation*}
	\emph{Case (iii): $\phi(x_0) = \phi'(x_0)=\phi''(x_0)=0, \phi'''(x_0)\neq0$.} Now $C_+^{(i)}=0$ for $1\leq i\leq 5$, and $\mathfrak{m}^{(k)}_{x_0}=0$ for all $1\leq k \leq 7$. With $h_k\in\ker(Y_0-S_+X_0)$ for $1\leq k \leq 6$, one finds that $h_7 \in\ker(Y_0-S_+X_0)$ and $\mathfrak{m}^{(8)}_{x_0}=0$. With these $h_k$ one finds that $h_8$ satisfies
	\begin{equation}
		(Y_0 - S_+X_0) h_8 = -7 C_+^{(6)}B_+S_+k_0,
	\end{equation}
	and
	\begin{equation*}
		\mathfrak{m}^{(9)}_{x_0}(\E^u_+(\cdot,0),\Ss_+(0))(w_0) = - 7\cdot 8\left \langle S_+B_+C_+^{(6)}B_+S_+k_0,k_0 \right \rangle = -7\cdot8 \left ( \f{60\phi'''(x_0)^2a_0^2}{2\sqrt{\beta} - \sigma_2} \right ) <0.
	\end{equation*}
	\emph{Case 3: $\dim \E^u_+(x_0,0) \cap \Ss_+(0)=2$.} Now we have $W_1 = \E^u_+(x_0,0) = \Ss_+(0)$, and $Y_0=S_+X_0$. At the outset, there is no restriction on the vector $k_0=(a,b)^\top$.
	
	\emph{Case (i): $\phi(x_0)=0, \phi'(x_0)\neq0$.} As in the proof of \cref{lemma:L+_Mas_left_conjpoints}, we have $C_+''\neq0$ and $\mathfrak{m}^{(k)}_{x_0}=0$ for $k=1,2$. It can be shown that now $h_1,h_2\in\ker (Y_0-S_+X_0)$ are arbitrary, and the third order form is given by \eqref{m3phi0case1i}. so that $n_-(\mathfrak{m}_{x_0}^{(3)})=1$. Next, using $W_4=\ker\mathfrak{m}_{x_0}^{(3)}=\spn\{\mathbf{s}_1\}$, one finds that $h_3$ is free provided $X_0h_0=k_0\in \ker C''_+$, i.e. $k_0=(a_0,0)^\top$, in which case $\mathfrak{m}^{(4)}_{x_0}=0$. For the fifth order form, with $k_0=(a,0)^\top$, one finds that $h_4$ is free provided $h_1$ satisfies
	\begin{equation}
		C''_+X_0 h_1 = -\f{3}{4} C_+''B_+S_+k_0,
	\end{equation}
	in which case
	\begin{equation}
		\mathfrak{m}^{(5)}_{x_0}(\E^u_+(\cdot,0),\Ss_+(0))(w_0) = -\f{3}{4} \left \langle S_+B_+C_+''B_+S_+k_0,k_0 \right \rangle = -\f{3}{4}  \left ( \f{6\phi'(x_0)^2a_0^2}{2\sqrt{\beta} - \sigma_2} \right ) <0,
	\end{equation}
	so that $n_-(\mathfrak{m}^{(3)}_{x_0})+n_-(\mathfrak{m}^{(5)}_{x_0})=2$.
	
	\emph{Case (ii): $\phi(x_0) = \phi'(x_0)=0, \phi''(x_0)\neq0$.} Now $C_+'''=C_+''=C_+'=0$, and $\mathfrak{m}^{(k)}_{x_0}=0$ for $1\leq k \leq 4$. With $W_k=W_1=\E^u_+(x_0,0) = \Ss_+(0)$ for $1\leq k \leq 5$, we find that $\mathfrak{m}^{(5)}_{x_0}$ is given by \eqref{m5_case2ii}, thus $n_-(\mathfrak{m}_{x_0}^{(5)})=1$. Next, using $W_6=\ker\mathfrak{m}_{x_0}^{(5)}=\spn\{\mathbf{s}_1\}$, one finds that $h_5$ is free provided  $X_0h_0=k_0\in \ker C^{(4)}_+$, i.e. $k_0=(a_0,0)^\top$, in which case $\mathfrak{m}^{(6)}_{x_0}=0$. For the seventh order form, with $k_0=(a,0)^\top$, one finds that $h_6$ is free provided $h_1$ satisfies
	\begin{equation}
		C^{(4)}_+X_0 h_1 = -\f{5}{6} C_+^{(4)}B_+S_+k_0,
	\end{equation}
	in which case
	\begin{equation}
		\mathfrak{m}^{(7)}_{x_0}(\E^u_+(\cdot,0),\Ss_+(0))(w_0) = -\f{5}{6} \left \langle S_+B_+C_+^{(4)}B_+S_+k_0,k_0 \right \rangle = -\f{5}{6}  \left ( \f{18\phi''(x_0)^2a_0^2}{2\sqrt{\beta} - \sigma_2} \right ) <0,
	\end{equation}
	so that $n_-(\mathfrak{m}^{(5)}_{x_0})+n_-(\mathfrak{m}^{(7)}_{x_0})=2$.
	
	\emph{Case (iii): $\phi(x_0) = \phi'(x_0)=\phi''(x_0)=0, \phi'''(x_0)\neq0$.} Now $C_+^{(i)}=0$ for $1\leq i \leq 5$, and $\mathfrak{m}^{(k)}_{x_0}=0$ for $1\leq k \leq 6$. With $W_k=W_1=\E^u_+(x_0,0) = \Ss_+(0)$ for $1\leq k \leq 7$, we find that $\mathfrak{m}^{(7)}_{x_0}$ is given by \eqref{m7_case2iii}, thus $n_-(\mathfrak{m}_{x_0}^{(7)})=1$. Next, using $W_8=\ker\mathfrak{m}_{x_0}^{(7)}=\spn\{\mathbf{s}_1\}$, one finds that $h_7$ is free provided  $X_0h_0=k_0\in \ker C^{(6)}_+$, i.e. $k_0=(a_0,0)^\top$, in which case $\mathfrak{m}^{(8)}_{x_0}=0$. For the ninth order form, with $k_0=(a,0)^\top$, one finds that $h_8$ is free provided $h_1$ satisfies
	\begin{equation}
		C^{(6)}_+X_0 h_1 = -\f{7}{8} C_+^{(6)}B_+S_+k_0,
	\end{equation}
	in which case
	\begin{equation}
		\mathfrak{m}^{(9)}_{x_0}(\E^u_+(\cdot,0),\Ss_+(0))(w_0) = -\f{7}{8} \left \langle S_+B_+C_+^{(6)}B_+S_+k_0,k_0 \right \rangle = -\f{7}{8}  \left ( \f{60\phi'''(x_0)^2a_0^2}{2\sqrt{\beta} - \sigma_2} \right ) <0,
	\end{equation}
	so that $n_-(\mathfrak{m}^{(7)}_{x_0})+n_-(\mathfrak{m}^{(9)}_{x_0})=2$.

\end{appendices}

\subsection*{Acknowledgements} M.C. acknowledges the support of the NSF grant DMS-2418900. R.M. and M.C. acknowledge the support of the Australian Research Council grant \\DP210101102.

\bibliographystyle{abbrv}
\bibliography{mybib}

\end{document}